\documentclass[sn-mathphys-num]{sn-jnl}
\usepackage[utf8]{inputenc}
\usepackage{amsmath,color}
\usepackage{amsfonts}
\usepackage{graphicx}
\usepackage{amssymb,fullpage}
\numberwithin{equation}{section}

\usepackage{amsthm}

\newtheorem{theorem}{Theorem}
\newtheorem{definition}{Definition}
\newtheorem{lemma}{Lemma}
\raggedbottom

\begin{document}

\title[Chemical reactions systems with limit cycles]{Planar chemical reaction systems with algebraic and non-algebraic limit cycles}
\author*[1]{\fnm{Gheorghe} \sur{Craciun}}\email{craciun@wisc.edu}

\author*[2]{\fnm{Radek} \sur{Erban}}\email{erban@maths.ox.ac.uk}

\affil[1]{\orgdiv{Department of Mathematics and Department of Biomolecular Chemistry}, \orgname{University of Wisconsin - Madison}, \orgaddress{480 Lincoln Dr}, \city{Madison}, \postcode{WI 53706-1388}, \country{United States}}

\affil[2]{\orgdiv{Mathematical Institute}, \orgname{University of Oxford}, \orgaddress{\street{Radcliffe Observatory Quarter
Woodstock Road}, \city{Oxford}, \postcode{OX2 6GG}, \country{United Kingdom}}}

\abstract{The Hilbert number $H(n)$ is defined as the maximum number of limit cycles of a planar autonomous system of ordinary differential equations (ODEs) with right-hand sides containing polynomials of degree at most $n \in {\mathbb N}$. The dynamics of chemical reaction systems with two chemical species  can be (under mass-action kinetics) described by such planar autonomous ODEs, where $n$ is equal to the maximum order of the chemical reactions in the system. Analogues of the Hilbert number~$H(n)$ for three different classes of chemical reaction systems are investigated: (i) chemical systems with reactions up to the $n$-th order; (ii) systems with up to $n$-molecular chemical reactions; and (iii) weakly reversible chemical reaction networks. In each case (i), (ii) and (iii), the question on the number of limit cycles is considered. Lower bounds on the modified Hilbert numbers are provided for both algebraic and non-algebraic limit cycles. Furthermore, given a general algebraic curve $h(x,y)=0$ of degree $n_h \in {\mathbb N}$ and containing one or more ovals in the positive quadrant, a chemical system is constructed which has the oval(s) as its stable algebraic limit cycle(s). The ODEs describing the dynamics of the constructed chemical system contain polynomials of degree at most $n=2\,n_h+1.$ Considering $n_h \ge 4,$ the algebraic curve $h(x,y)=0$ can contain multiple closed components with the maximum number of ovals given by Harnack's curve theorem as $1+(n_h-1)(n_h-2)/2$, which is equal to 4 for $n_h=4.$ Algebraic curve $h(x,y)=0$ with $n_h=4$ and the maximum number of four ovals is used to construct a chemical system which has four stable algebraic limit cycles.}

\keywords{chemical reaction networks, limit cycles, Hilbert number, algebraic limit cycles}

\maketitle

\section{Introduction}

The dynamics of chemical reaction networks under mass-action kinetics is inherently connected with the investigation of the dynamics of ordinary differential equations (ODEs) with polynomial right-hand sides~\cite{Feinberg:2019:FCR,Angeli:2009:TCR}. In this paper, we consider chemical reaction networks with two chemical species $X$ and $Y.$ Denoting the time-dependent concentrations of chemical species $X$ and $Y$ by $x(t)$ and $y(t)$, respectively, their time evolution is described by a planar system of ODEs
\begin{eqnarray}
\frac{\mbox{d}x}{\mbox{d}t}
& = &
f(x,y) \, , 
\label{odex_general}
\\
\frac{\mbox{d}y}{\mbox{d}t}
& = &
g(x,y) \, ,
\label{odey_general}
\end{eqnarray}
where $f(x,y)$ and $g(x,y)$ are polynomials. The ODE systems in the form~(\ref{odex_general})--(\ref{odey_general}) have been investigated in detail since the pioneering work of Poincar\'e and Bendixson~\cite{Bendixson:1901:SCD}, who showed that the most complex long term dynamics one can expect to observe in planar systems are multiple limit cycles. Considering that $f(x,y)$ and $g(x,y)$ are polynomials of degree at most $n \in {\mathbb N}$, it is interesting to ask how many limit cycles ODEs~(\ref{odex_general})--(\ref{odey_general}) can have. While this question in its full-generality is a part of the yet-unsolved Hilbert's 16th problem~\cite{Christopher:2007:LCD}, it can be answered when there are additional restrictions imposed on the right-hand sides of the ODEs~\cite{Pota:1983:TBS,Schuman:2003:LCT,Gasull:2023:NLC}. A related question is to find (in some sense minimal) examples of planar polynomial ODE systems~(\ref{odex_general})--(\ref{odey_general}) for low values of $n$, which have a certain number of limit cycles or specific bifurcation structure~\cite{Shi:1980:CEE,Li:2009:CST,Plesa:2016:CRS,Erban:2009:ASC}.

Chemical reaction networks consisting of reactions with order at most $n \in {\mathbb N}$ can be described by ODEs in the form~(\ref{odex_general})--(\ref{odey_general}), where $f(x,y)$ and $g(x,y)$ are polynomials of degree at most $n$, which have some further restrictions on their coefficients. In Section~\ref{sec2}, we define three important classes of chemical reactions networks: (i) set ${\mathbb S}_n$ consisting of reactions of at most $n$-th order; (ii) set ${\mathbb M}_n$ consisting of reactions which are at most $n$-molecular; and (iii) set ${\mathbb W}_{\!n}$ consisting of the networks in ${\mathbb M}_n$ which are weakly reversible~\cite{Craciun:2020:ECC}. Our main question is to understand the existence of limit cycles in sets ${\mathbb S}_n$, ${\mathbb M}_n$ and ${\mathbb W}_{\!n}$, either by finding relatively simple chemical networks with a certain number of limit cycles, or by proving that certain numbers and configurations of the limit cycles cannot exist. Denoting the maximum number of limit cycles in sets ${\mathbb S}_n$, ${\mathbb M}_n$ and ${\mathbb W}_{\!n}$ by $S(n)$, $M(n)$ and $W{\hskip -0.3mm}(n)$, respectively, we study the counterparts of the Hilbert number $H(n)$ in the chemical reaction network theory~\cite{Erban:2023:CSL}. In~Section~\ref{sec3}, we provide estimates on the values of $S(n)$, $M(n)$ and $W{\hskip -0.3mm}(n)$ for small values of $n$ and in the asymptotic limit $n \to \infty.$

An important subset of limit cycles in planar polynomial ODE systems~~(\ref{odex_general})--(\ref{odey_general}) are algebraic limit cycles~\cite{Chavarriga:2004:ALC,Gasull:2023:NLC}. An algebraic limit cycle is not only a closed isolated solution of the ODE system, but it can also be represented as a closed component of an algebraic curve $h(x,y)=0$, where $h$ is a polynomial. The simplest examples of algebraic curves include circles and ellipses. Some chemical systems that have an `exactly evaluable' (algebraic) limit cycle given as an ellipse were analyzed by Escher~\cite{escher1979models,Escher:1980:GSA,Escher:1980:MCR}. In Section~\ref{sec4}, we focus on constructing chemical systems with {\em algebraic} limit cycles. In particular, we further specialize the numbers of limit cycles $S(n)$, $M(n)$ and $W{\hskip -0.3mm}(n)$ in sets ${\mathbb S}_n$, ${\mathbb M}_n$ and ${\mathbb W}_{\!n}$ to quantities giving the maximum number of {algebraic} limit cycles, denoting them by $S^a(n)$, $M^a(n)$ and $W^a(n)$. If the degree of polynomial $h$ is $n_h=2$ or $n_h=3$, then the algebraic curve $h(x,y)=0$ contains at most one closed oval (a connected component diffeomorphic to a circle). If $n_h=4,$ then Harnack's curve theorem implies that the maximum number of connected components is 4. In Section~\ref{sec61}, we study an algebraic curve $h(x,y)=0$ of degree $n_h=4$ which has the maximum number of ovals, $4$, for some parameter values. We construct a chemical system which has 
all four ovals as stable limit cycles. We conclude with the discussion of our results and the literature in Section~\ref{secdiscussion}.

\section{Chemical reaction networks in sets ${\mathbb S}_n$, ${\mathbb M}_n$ and ${\mathbb  W}_{\!n}$}

\label{sec2}

We consider chemical reaction networks with two chemical species $X$ and $Y$ which are subject to $m \in {\mathbb N}$ chemical reactions
\begin{equation}
\alpha_i X \, + \, \beta_i Y 
\;
\mathop{\longrightarrow}^{k_i}
\;
\gamma_i X \, + \, \delta_i Y \, ,
\qquad
\mbox{for}
\quad
i = 1, 2, \dots, m,
\label{crn}    
\end{equation}
where $\alpha_i \in {\mathbb N}_0,$ 
$\beta_i \in {\mathbb N}_0$, 
$\gamma_i \in {\mathbb N}_0$ and 
$\delta_i \in {\mathbb N}_0$ are nonnegative integers, called {\it stoichiometric coefficients}, and $k_i$ is the corresponding reaction {\it rate constant} which has physical units of [time]$^{-1}$[concentration]$^{1-\alpha_i-\beta_i}$. However, in what follows, we assume that all chemical models have been non-dimensionalized, {\it i.e.} the rate constant $k_i$ is assumed to be a positive real number for $i=1,2,\dots,m$. Moreover, to avoid some degenerate cases, we assume that the same four-tuple $(\alpha_i, \beta_i,\gamma_i,\delta_i)$ does not occur more than once in the set of $m$ chemical reactions~(\ref{crn}) and we have $(\alpha_i, \beta_i) \ne (\gamma_i,\delta_i)$, {\it i.e.} in each reaction step at least one of the two chemical species $X$ and $Y$ changes. To get non-trivial planar systems, we also assume that both species take part in at least one reaction in the  set of $m$ chemical reactions~(\ref{crn}). We then define the {\it order} of the chemical reaction network~(\ref{crn}) by
\begin{equation}
n 
\, = \,
\max_{i=1,2,\dots,m}
(\alpha_i+\beta_i) \, ,
\label{deforder}
\end{equation}
{\it i.e.} the chemical reaction network~(\ref{crn}) is of the $n$-th order, if all individual reactions are of at most $n$-th order, where the order of an individual reaction is given as $(\alpha_i+\beta_i)$.
Assuming mass-action kinetics, the time evolution of the chemical reaction network~(\ref{crn}) is given by the reaction rate equations which is a planar polynomial ODE system in the following form~\cite{Feinberg:2019:FCR,Erban:2020:SMR} 
\begin{eqnarray}
\frac{\mbox{{\rm d}}x}{\mbox{{\rm d}}t}
& = &
\sum_{i=1}^m 
k_i \, (\gamma_i-\alpha_i) \, x^{\alpha_i} \, y^{\beta_i} \, , 
\label{general_chemical_system_x}
\\
\frac{\mbox{{\rm d}}y}{\mbox{{\rm d}}t}
& = &
\sum_{i=1}^m k_i \, (\delta_i-\beta_i) \, x^{\alpha_i} \, y^{\beta_i}   \, .
\label{general_chemical_system_y}
\end{eqnarray}
This ODE system is of the form~(\ref{odex_general})--(\ref{odey_general}), where $f(x,y)$ and $g(x,y)$ are polynomials of degree at most~$n$, where $n$ is the order of the chemical reaction network given by~(\ref{deforder}). 

\vskip 2mm

\noindent
{\bf Example.} The Lotka-Volterra system can be written as a chemical reaction network in the form~(\ref{crn}) with $m=3$ chemical reactions and stochiometric coefficients
$$
(\alpha_1,\beta_1,\gamma_1,\delta_1)
=
(1,0,2,0),
\qquad
(\alpha_2,\beta_2,\gamma_2,\delta_2)
=
(1,1,0,2),
\qquad
(\alpha_3,\beta_3,\gamma_3,\delta_3)
=
(0,1,0,0),
$$
{\it i.e.} the chemical reactions are
\begin{equation}
X 
\;
\mathop{\longrightarrow}^{k_1}
\;
2 X \, ,
\qquad\qquad
X \, + \, Y
\;
\mathop{\longrightarrow}^{k_2}
\;
2 Y \, ,
\qquad\qquad
Y
\;
\mathop{\longrightarrow}^{k_3}
\;
\emptyset,
\label{LotkaVolterraCRN}
\end{equation}
where the reaction rate constants $k_1$, $k_2$ and $k_3$ are positive real numbers. The Lotka-Volterra system~(\ref{LotkaVolterraCRN}) is given by a chemical reaction network of the second-order, {\it i.e.} equation~(\ref{deforder}) gives $n=2$. The corresponding ODE system~(\ref{general_chemical_system_x})--(\ref{general_chemical_system_y}) is a planar quadratic ODE system of the form
\begin{eqnarray}
\frac{\mbox{d}x}{\mbox{d}t}
\,& = &\,
k_1 \, x \, - \, k_2 \, x \, y \, , 
\label{LotkaVolterraODEx}
\\
\frac{\mbox{d}y}{\mbox{d}t}
\,& = &\,
k_2 \, x \, y \, - \, k_3 \, y \, .
\label{LotkaVolterraODEy}
\end{eqnarray}

\vskip 2mm

\begin{definition}
Let $n \in {\mathbb N}.$ We denote by $\,{\mathbb S}_n$ the set of chemical networks~$(\ref{crn})$ which are of at most $n$-th order, where the order is defined by~$(\ref{deforder})$.
\label{defonset}
\end{definition}

\vskip 2mm

\noindent
In what follows we will make a convenient abuse of terminology, and use ${\mathbb S}_n$ to denote not only the set of the chemical reaction networks described above, but also the set of the corresponding ODEs~(\ref{general_chemical_system_x})--(\ref{general_chemical_system_y}), which are planar autonomous ODE systems in the form~(\ref{odex_general})--(\ref{odey_general}), where $f(x,y)$ and $g(x,y)$ are polynomials of degree at most $n$. In general, such polynomials can be expressed in terms of coefficients as
\begin{equation}
f(x,y) 
\, = \, 
\sum_{\mbox{$\{$}i,j \ge 0 \; \mbox{$|$} \;i+j \le n\mbox{$\}$}}  a_{i,j} \, x^i \, y^j
\qquad
\mbox{and}
\qquad
g(x,y) 
\, = \, 
\sum_{\mbox{$\{$}i,j \ge 0 \; \mbox{$|$} \;i+j \le n\mbox{$\}$}}  b_{i,j} \, x^i \, y^j \,,
\label{fg_with_coefficients}
\end{equation}
where $a_{ij}$, $b_{ij}$ are real numbers. The set ${\mathbb S}_n$ can then be characterized in terms of restrictions on these coefficients, which we state as our next lemma.

\vskip 2mm

\begin{lemma}
Consider an {\rm ODE} system in the form $(\ref{odex_general})$--$(\ref{odey_general})$, with $f$ and $g$ given by $(\ref{fg_with_coefficients})$. Then a necessary and sufficient condition for belonging to the set $\,{\mathbb S}_{n}$ is that the coefficients of $f$ and $g$ in~$(\ref{fg_with_coefficients})$ satisfy the following condition
\begin{equation}
a_{0,i} \ge 0 \qquad
\mbox{and}
\qquad
b_{i,0} \ge 0 \qquad
\mbox{for} \quad i=0,1,2,\dots,n.
\label{condO}    
\end{equation}
\label{lemma1}
\end{lemma}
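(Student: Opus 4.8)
The plan is to prove both implications by comparing the coefficients of the general rate equations~(\ref{general_chemical_system_x})--(\ref{general_chemical_system_y}) with those of the generic polynomials~(\ref{fg_with_coefficients}). Collecting the reactions~(\ref{crn}) according to their reactant complex, the coefficients of $x^i y^j$ in $f$ and $g$ are
\begin{equation*}
a_{i,j} = \sum_{r\,:\,(\alpha_r,\beta_r)=(i,j)} k_r\,(\gamma_r - i), \qquad b_{i,j} = \sum_{r\,:\,(\alpha_r,\beta_r)=(i,j)} k_r\,(\delta_r - j),
\end{equation*}
where each sum runs over the reactions whose reactant stoichiometry equals $(i,j)$, and every reactant satisfies $i+j\le n$ by~(\ref{deforder}).

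For necessity I would specialise these identities to the boundary rows $i=0$ in $f$ and $j=0$ in $g$. When $i=0$ the factor $\gamma_r - i = \gamma_r \ge 0$, and since every $k_r>0$ this forces $a_{0,j}\ge 0$; symmetrically $\delta_r - j = \delta_r \ge 0$ when $j=0$ yields $b_{i,0}\ge 0$. This merely records that a species absent from a reactant complex can only be produced, never consumed, so condition~(\ref{condO}) is necessary.

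For sufficiency I would argue constructively, realising any $f,g$ obeying~(\ref{condO}) by a network in ${\mathbb S}_n$. For each monomial $x^i y^j$ with $i+j\le n$ I would attach at most two reactions sharing the reactant complex $i\,X + j\,Y$: one reproducing the coefficient $a_{i,j}$ of $f$ while leaving the $Y$-stoichiometry unchanged, the other reproducing $b_{i,j}$ while leaving the $X$-stoichiometry unchanged. To contribute $a_{i,j}$ to $f$ alone, one takes $i\,X + j\,Y \to (i+1)\,X + j\,Y$ with rate $a_{i,j}$ if $a_{i,j}>0$, and $i\,X + j\,Y \to (i-1)\,X + j\,Y$ with rate $-a_{i,j}$ if $a_{i,j}<0$ (and omits the reaction if $a_{i,j}=0$). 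The decreasing variant requires $i\ge 1$, which is exactly what $a_{0,j}\ge 0$ supplies on the row $i=0$; the construction for $b_{i,j}$ is symmetric, with $j\ge 1$ guaranteed by $b_{i,0}\ge 0$. Each such reaction has reactant order $i+j\le n$, so the assembled network lies in ${\mathbb S}_n$ and produces the prescribed $f$ and $g$.

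The only genuine obstruction --- and the reason~(\ref{condO}) is the sharp condition --- appears in this last step: a reaction can decrease the amount of a species only if that species occurs among its reactants, so negative coefficients are unattainable precisely on the boundary rows $i=0$ (for $f$) and $j=0$ (for $g$). Everything else is bookkeeping, namely decoupling the $x$- and $y$-contributions into separate reactions and checking that no reactant order exceeds $n$.
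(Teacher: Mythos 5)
Your proof is correct and follows essentially the same route as the paper: necessity via the observation that a reaction with $\alpha_r=0$ (resp.\ $\beta_r=0$) can only contribute nonnegatively to the corresponding boundary coefficients, and sufficiency via the one-reaction-per-monomial construction $iX+jY \to (i\pm 1)X+jY$ and $iX+jY \to iX+(j\pm 1)Y$ with rate constants $|a_{i,j}|$, $|b_{i,j}|$. Your explicit coefficient-matching identities grouped by reactant complex make the necessity direction slightly more transparent than the paper's terse statement, but the substance is identical.
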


\begin{proof}
This is a classical result~\cite{Hars:1981:IPR}. We include a short proof here, since the construction in this proof  will be used again later. 
Consider some term of the form $a_{i,j} \, x^i \, y^j$ that is one of the monomials within $f(x,y)$ in~(\ref{fg_with_coefficients}). In particular, we have $i+j \le n$. We will exhibit a reaction that gives rise to this term $a_{i,j} \, x^i \, y^j$, and no other terms; in other words, this reaction generates the system
$$
\frac{\mbox{d}x}{\mbox{d}t}
\, = \,
a_{i,j} \, x^i \, y^j  
\qquad\qquad
\mbox{and}
\qquad\qquad
\frac{\mbox{d}y}{\mbox{d}t}
\, = \,
0. 
$$
Indeed, if $a_{i,j} < 0$ it follows that $i \ge 1$ (because $a_{0,i} \ge 0$), and then this term can be obtained by using the reaction $iX + jY \to (i-1)X + jY$, if we choose its reaction rate constant to be $k=-a_{i,j}$. 
Similarly, if $a_{i,j} > 0$, this term can be obtained by using the reaction $iX + jY \to (i+1)X + jY$, by choosing its reaction rate constant to be $k=a_{i,j}$. 

We can proceed similarly for monomials of $g(x,y)$, by using reactions of the from $iX + jY \to iX + (j-1)Y$ and $iX + jY \to iX + (j+1)Y$. We conclude that, by using the reaction network which consists of the reactions described above ({\it i.e.}, one reaction for each monomial of $f(x,y)$, and one reaction for each monomial of $g(x,y)$), we can obtain the system (\ref{odex_general})-(\ref{odey_general}). 

Conversely, we can see in $(\ref{general_chemical_system_x})$--$(\ref{general_chemical_system_y})$ that if $a_i = 0$ then $k_i(c_i - a_i) \ge 0$ and, similarly,   if $b_i = 0$ then $k_i(d_i - b_i) \ge 0$. This implies that any polynomial dynamical system that is generated by a chemical reaction network satisfies the inequalities (\ref{condO}).
\end{proof}

\vskip 2mm

\noindent
The Lotka-Volterra system~(\ref{LotkaVolterraCRN}) is an example of a chemical reaction network in set ${\mathbb S}_2$, because every reaction is of at most second-order. Moreover, we observe that not only each reaction in~(\ref{LotkaVolterraCRN}) has at most two reactants, but is also has at most two products. We will denote the set of such networks as ${\mathbb M}_2$ and call them {\em bimolecular} reaction networks. In general, we define the set of $n$-molecular reaction networks~${\mathbb M}_n$ as follows.

\vskip 2mm

\begin{definition}
Let $n \in {\mathbb N}.$ We denote by $\,{\mathbb M}_n$ the subset of $\,{\mathbb S}_n$ which corresponds to chemical reaction networks~$(\ref{crn})$, where each chemical reaction has at most $n$ reactants and $n$ products, {\it i.e.}
\begin{equation}
\max_{i=1,2,\dots,m}
\max \big\{
(\alpha_i+\beta_i),
(\gamma_i+\delta_i)
\big\} \, \le \, n \, .
\label{nmol}
\end{equation}
\label{defmnset}
\end{definition}

\noindent
The set of $n$-molecular reaction networks ${\mathbb M}_n$ can again be characterized in terms of the coefficients as stated in the next lemma.

\vskip 2mm

\begin{lemma}
Consider an {\rm ODE} system in the form $(\ref{odex_general})$--$(\ref{odey_general})$, with $f$ and $g$ given by $(\ref{fg_with_coefficients})$.  Then a necessary and sufficient condition for belonging to $\,{\mathbb M}_{n}$ is to satisfy inequalities~$(\ref{condO})$ together with
\begin{equation}
a_{i,n-i} + b_{i,n-i} \le 0 \qquad \quad
\mbox{for} \quad i=0,1,2,\dots,n.
\label{condM}    
\end{equation}
\end{lemma}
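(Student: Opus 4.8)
The plan is to leverage Lemma~\ref{lemma1}. Since $\mathbb{M}_n$ is by definition a subset of $\mathbb{S}_n$, condition~(\ref{condO}) is inherited and remains necessary, so all that is new is the top-degree inequality~(\ref{condM}): I must show it is necessary, and that~(\ref{condO}) and~(\ref{condM}) together are sufficient. The organizing observation is that a reaction with reactant complex $\alpha X + \beta Y$ and product complex $\gamma X + \delta Y$ contributes the net stoichiometric vector $(\gamma-\alpha,\delta-\beta)$ to~(\ref{general_chemical_system_x})--(\ref{general_chemical_system_y}), and that when the reactant complex already has the maximal size $\alpha+\beta=n$, the requirement $\gamma+\delta\le n$ of $n$-molecularity is equivalent to $(\gamma-\alpha)+(\delta-\beta)\le 0$. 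This sign condition on the sum of the two net coordinates is precisely what produces~(\ref{condM}).

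For necessity I would argue as follows. Only reactions of order exactly $n$ contribute to the degree-$n$ monomials, and a reaction contributes to the coefficient of $x^i y^{n-i}$ only if its reactant complex is $iX+(n-i)Y$. Reading off~(\ref{general_chemical_system_x})--(\ref{general_chemical_system_y}) and summing, I obtain
\begin{equation*}
a_{i,n-i}+b_{i,n-i}
\,=\,
\sum_{j}\,k_j\,\big(\gamma_j+\delta_j-n\big),
\end{equation*}
where the sum runs over all reactions $j$ with reactant complex $iX+(n-i)Y$. Since every $k_j>0$ and every $\gamma_j+\delta_j\le n$, each summand is nonpositive, which gives~(\ref{condM}).

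For sufficiency I would split the monomials of $f$ and $g$ by degree. Every monomial of degree at most $n-1$ is realized by the single-reaction recipe of Lemma~\ref{lemma1}, and these reactions are automatically $n$-molecular, because a reactant complex of size $i+j\le n-1$ produces a product complex of size $i+j\pm 1\le n$. The genuinely new difficulty, and the main obstacle, lies at degree $n$: the Lemma~\ref{lemma1} recipe for a positive top-degree coefficient would ``add a molecule'' and create a product complex of size $n+1$, which is forbidden. I would instead realize the pair $(a_{i,n-i},b_{i,n-i})$ jointly, using only reactions with the fixed reactant complex $iX+(n-i)Y$; the admissible net vectors are the integer points $(u,v)=(\gamma-i,\delta-(n-i))$ with $\gamma,\delta\ge 0$ and $\gamma+\delta\le n$. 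For an interior index $1\le i\le n-1$ the vectors $(1,-1)$, $(-1,1)$, $(-1,0)$ and $(0,-1)$ are all admissible, and their nonnegative real span is exactly the half-plane $\{(u,v):u+v\le 0\}$; condition~(\ref{condM}) therefore places $(a_{i,n-i},b_{i,n-i})$ inside the achievable cone, so a decomposition with positive rate constants exists. The boundary cases $i=0$ and $i=n$ use the extra sign information $a_{0,n}\ge 0$ and $b_{n,0}\ge 0$ from~(\ref{condO}): for $i=0$ one matches $(a_{0,n},b_{0,n})$ by the two reactions $nY\to X+(n-1)Y$ and $nY\to (n-1)Y$, with rates $a_{0,n}$ and $-(a_{0,n}+b_{0,n})$, and the case $i=n$ is symmetric. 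The conceptual point is that a positive top-degree demand on $\mathrm{d}x/\mathrm{d}t$ is satisfied not by manufacturing an extra molecule but by the conversion reaction $iX+(n-i)Y\to(i+1)X+(n-i-1)Y$ of net vector $(+1,-1)$, which preserves molecularity $n$, and~(\ref{condM}) is exactly the condition under which such conversions can match both coordinates at once.
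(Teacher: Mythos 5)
Your proof is correct and takes essentially the same route as the paper: lower-degree monomials are realized by the single-reaction recipe of Lemma~\ref{lemma1}, and the top-degree pairs $(a_{i,n-i},b_{i,n-i})$ are realized jointly by reactions sharing the reactant complex $iX+(n-i)Y$ whose net vectors have nonpositive coordinate sum, with the boundary cases $i=0$ and $i=n$ handled by exactly the same two reactions and rate constants as in the paper. The only differences are cosmetic: for interior indices you argue via the cone generated by admissible net vectors equalling the half-plane $\{(u,v): u+v\le 0\}$, where the paper instead writes explicit two-reaction decompositions with rates $\frac{a_{i,n-i}-b_{i,n-i}}{2}$ and $\frac{-a_{i,n-i}-b_{i,n-i}}{2}$ (split by the sign of $a_{i,n-i}-b_{i,n-i}$), and you spell out the necessity of~(\ref{condM}), which the paper leaves implicit.
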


\begin{proof}
Consider an {\rm ODE} system~$(\ref{odex_general})$--$(\ref{odey_general})$, with $f$ and $g$ expressed in coefficients $(\ref{fg_with_coefficients})$ that satisfy~$(\ref{condO})$ and $(\ref{condM})$. 
In particular, according to Lemma~\ref{lemma1}, this system belongs to ${\mathbb S}_n$, which implies that it can be realized by a reaction network where all the reactions are of the form $iX+jY \to pX+qY$ with $i+j \le n$. 
We need to show that we can also choose these reactions such that  $p+q \le n$. 

Note that we know from the proof of Lemma~\ref{lemma1} that we can obtain the same monomial terms by using reactions such that $(p,q) = (i \pm 1, j)$ or $(p,q) = (i, j \pm 1)$, and therefore $p+q \le i+j+1$.
This gives us the desired conclusion if $i+j \le n-1$. Consider now the case $i+j = n$, {\it i.e.} $j=n-i.$
If $i>0$, $n-i>0$ and $a_{i,n-i} \ge b_{i,n-i}$, then we can realize the system
\begin{equation}
\frac{\mbox{d}x}{\mbox{d}t}
\, = \,
a_{i,n-i} x^i y^{n-i}  
\qquad \mbox{and} \qquad
\frac{\mbox{d}y}{\mbox{d}t}
\, = \,
b_{i,n-i} x^i y^{n-i}
\label{boundsystem}
\end{equation}
by using the following two chemical reactions 
$$
i \, X \, + \, (n-i) \, Y 
\;
\xrightarrow{\;\frac{a_{i,n-i} - b_{i,n-i}}{2}\;}
\;
(i+1) \, X \, + \, (n-i-1) \,Y
$$
and
$$
i \, X \, + \, (n-i) \, Y 
\;
\xrightarrow{\;\frac{-a_{i,n-i} - b_{i,n-i}}{2}\;}
\;
(i-1) \, X \, + \, (n-i-1) \, Y.
$$
Similarly, if $i>0$, $n-i>0$ and $a_{i,n-i} \le b_{i,n-i}$, then we can realize ODE system~(\ref{boundsystem}) by using  two reactions $iX+(n-i)Y \to (i-1)X+(n-i+1)Y$ and $iX+(n-i)Y \to (i-1)X+(n-i-1)Y$.

Finally, if $i=0$, then we know that $a_{0,n} \ge 0$, and we can realize ODE system~(\ref{boundsystem}) by using  two reactions $nY \to X+(n-1)Y$ and $nY \to (n-1)Y$ with reaction rate constants $a_{0,n}$ and $-a_{0,n} - b_{0,n}$, respectively. The case where $i=n$ is analogous to the case $i=0$.
\end{proof}

\noindent
The Lotka-Volterra system~(\ref{LotkaVolterraCRN}) is an example of a chemical reaction network belonging to both ${\mathbb S}_2$ and~${\mathbb M}_2$. It is 
described by the conservative dynamical system~(\ref{LotkaVolterraODEx})--(\ref{LotkaVolterraODEy}), with the conserved quantity $k_2 (x+y) - k_3 \log (x) - k_1 \log(y)$ on orbits. In particular, the ODE system~(\ref{LotkaVolterraODEx})--(\ref{LotkaVolterraODEy}) admits periodic solutions, but it has no limit cycles. In Section~\ref{sec3}, we will start our investigation of the existence and number of limit cycles of chemical reaction networks in sets ${\mathbb S}_n$ and~${\mathbb M}_n$. We will observe that ODE systems in ${\mathbb M}_2$ do not have any limit cycles, but there are reaction systems in ${\mathbb S}_2$ with limit cycles. There are also other important properties and classes of chemical reaction networks, including weakly reversible chemical systems~\cite{Craciun:2020:ECC,Boros:2020:WRM}. To define weak reversibility, we embed the chemical reaction network~(\ref{crn}) as a planar E-graph ({\it i.e.} Euclidean embedded graph, see~\cite{craciun2019polynomial,yu2018mathematical}) where the nodes have coordinates
$(\alpha_i,\beta_i)$ and $(\gamma_i,\delta_i)$ and each reaction corresponds to the edge 
$(\alpha_i,\beta_i) \longrightarrow (\gamma_i,\delta_i)$. For example, the planar E-graph corresponding to the Lotka-Volterra chemical system~(\ref{LotkaVolterraCRN}) has three edges 
$$
(1,0) \to (2,0),
\qquad
(1,1) \to (0,2),
\qquad
(0,1) \to (0,0)
$$
and it is schematically shown in Figure~\ref{figure1}(a). We say that
a chemical reaction network is {\it weakly reversible} if every edge of the associated planar E-graph is a part of an oriented cycle. Clearly, Lotka-Volterra system is not weakly reversible.

\vskip 2mm

\begin{figure}
\leftline{(a) \hskip 4.86cm (b) \hskip 5.0cm (c)}
\centerline{\hskip 5mm \includegraphics[height=3.5cm]{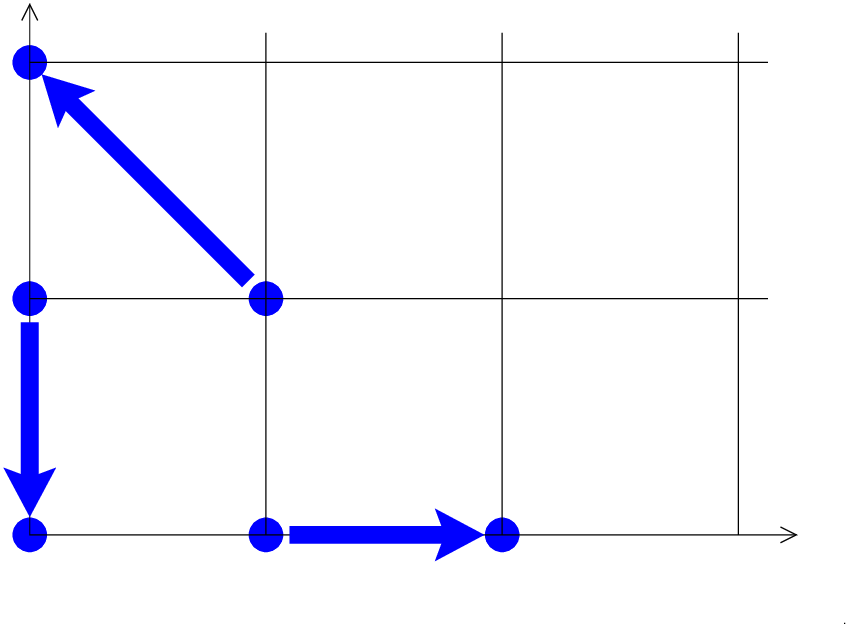}\hskip 7mm \includegraphics[height=3.5cm]{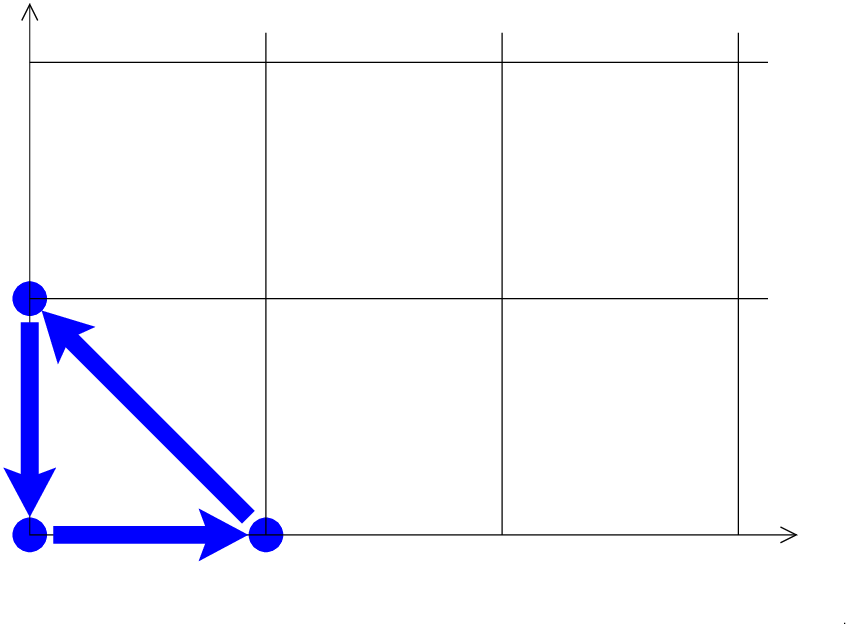} \hskip 7mm \includegraphics[height=3.5cm]{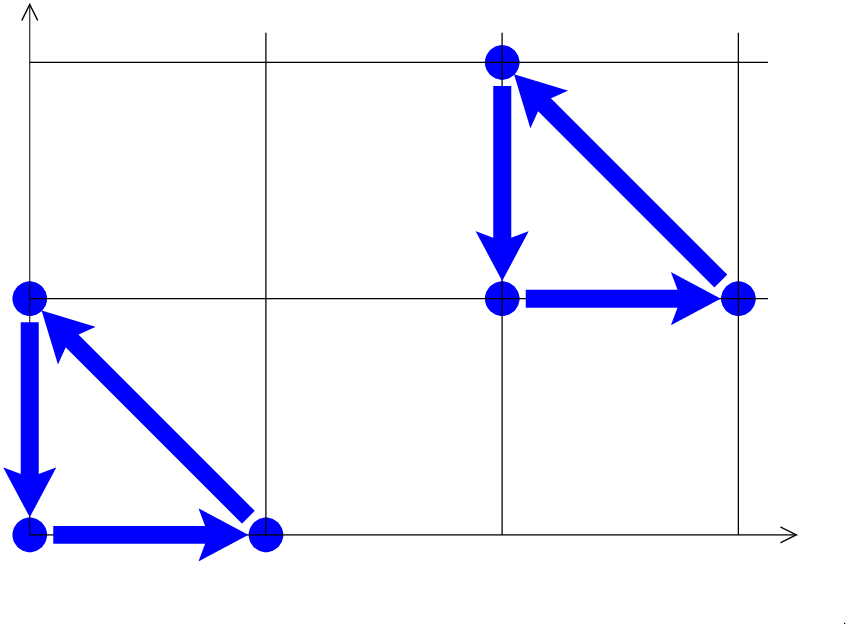}}
\caption{{\it Schematics of planar {\rm E-graphs} associated with} (a)~{\it Lotka-Volterra chemical system~$(\ref{LotkaVolterraCRN})$;} (b)~{\it chemical system~$(\ref{illustrativenetwork1})$;} (c)~{\it chemical system with six reactions given by~$(\ref{illustrativenetwork1})$ and~$(\ref{illustrativenetwork2})$.}}
\label{figure1}
\end{figure}

\noindent
{\bf Example.} Consider the first-order chemical reaction network
\begin{equation}
\emptyset 
\;
\mathop{\longrightarrow}^{1}
\;
X \, ,
\qquad\qquad
X 
\;
\mathop{\longrightarrow}^{1}
\;
Y \, ,
\qquad\qquad
Y
\;
\mathop{\longrightarrow}^{1}
\;
\emptyset \,.
\label{illustrativenetwork1}
\end{equation}
Its associated planar E-graph is visualized in Figure~\ref{figure1}(b), schematically showing three edges $(0,0) \to (1,0),$ $(1,0) \to (0,1)$ and $(0,1) \to (0,0)$ corresponding to the three reactions of chemical system~(\ref{illustrativenetwork1}). Since every edge of the associated planar E-graph is a part of an oriented cycle, chemical reaction network~(\ref{illustrativenetwork1}) is weakly reversible. The ODE system~(\ref{general_chemical_system_x})--(\ref{general_chemical_system_y}) corresponding to the chemical system~(\ref{illustrativenetwork1}) is a planar linear ODE system
\begin{eqnarray}
\frac{\mbox{d}x}{\mbox{d}t}
\,& = &\,
1 \, - \, x \, ,
\label{odelinear1}
\\
\frac{\mbox{d}y}{\mbox{d}t}
\,& = &\,
x \, - \, y \, .
\label{odelinear2}
\end{eqnarray}
Multiplying the right-hand side of the ODE system by polynomial $(1+x^2 y)$, we obtain the ODE system
\begin{eqnarray*}
\frac{\mbox{d}x}{\mbox{d}t}
\,& = &\,
(1 \, + \, x^2 y) (1 \, - \, x) \, ,
\\
\frac{\mbox{d}y}{\mbox{d}t}
\,& = &\,
(1 \, + \, x^2 y) (x \, - \, y) \, ,
\end{eqnarray*}
which are the reaction rate equations for the chemical system consisting of reactions~(\ref{illustrativenetwork1}) and additional three reactions:
\begin{equation}
2 \, X \, + \, Y 
\;
\mathop{\longrightarrow}^{1}
\;
3 \, X \, + \, Y \, ,
\qquad
3 \, X \, + \, Y \, 
\;
\mathop{\longrightarrow}^{1}
\;
2 \, X \, + \, 2 \, Y \, ,
\qquad
2 \, X \, + \, 2 \, Y 
\;
\mathop{\longrightarrow}^{1}
\;
2 \, X \, + \, Y  \,.
\label{illustrativenetwork2}
\end{equation}
The associated planar E-graph is visualized in Figure~\ref{figure1}(c). This example illustrates that the multiplication of the right-hand side of the reaction rate ODEs by a polynomial with positive coefficients results with multiple copies of the original associated E-graph. This will be further used in Section~\ref{sec4}, where we study weakly reversible systems with algebraic limit cycles.
Moreover, the chemical system consisting of six reactions~(\ref{illustrativenetwork1}) and (\ref{illustrativenetwork2}) in Figure~\ref{figure1}(c) provides another example of a weakly reversible system.

\vskip 2mm

\begin{definition}
Let $n \in {\mathbb N}.$ We denote by ${\mathbb W}_{\!n}$ the subset of $\,{\mathbb M}_n$ which corresponds to chemical reaction networks~$(\ref{crn})$ that are weakly reversible.
\label{defwnset}
\end{definition}

\vskip 2mm

\noindent
Considering an arbitrary chemical reaction network, it belongs to ${\mathbb S}_n$ where $n$ is the order given by~(\ref{deforder}). Moreover, the corresponding ODEs satisfy the property that they can be obtained as reaction rate equations of a chemical system in ${\mathbb M}_{n+1}$. We formulate this result as our next lemma.

\vskip 2mm

\begin{lemma}
We have \hskip 2mm
{\rm (a)}
$
{\mathbb W}_{\!n}
\subset
{\mathbb M}_n \subset {\mathbb S}_n
\qquad\quad\mbox{for all} \quad n \in {\mathbb N}
$\,, \hfill\break
\rule{0pt}{1pt} \hskip 34.5mm
{\rm (b)}
$
{\mathbb M}_2 \subset {\mathbb S}_2 \subset {\mathbb M_3} \subset {\mathbb S}_3 \subset {\mathbb M_4} \subset {\mathbb S}_4 \subset  ...
$\,.
\label{setproperties}
\end{lemma}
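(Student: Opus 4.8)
The plan is to treat the two parts separately. Part (a) is immediate from the definitions: Definition~\ref{defmnset} introduces $\mathbb{M}_n$ as a subset of $\mathbb{S}_n$, and Definition~\ref{defwnset} introduces $\mathbb{W}_{\!n}$ as a subset of $\mathbb{M}_n$, so the chain $\mathbb{W}_{\!n} \subset \mathbb{M}_n \subset \mathbb{S}_n$ holds by construction for every $n$, and I would simply cite these definitions.

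The content lies in part (b). Since the chain alternates between the two families, it suffices to combine the link $\mathbb{M}_n \subset \mathbb{S}_n$ from part (a) with the single new inclusion $\mathbb{S}_n \subset \mathbb{M}_{n+1}$. To establish the latter I would argue at the level of coefficients. Take a system in $\mathbb{S}_n$, so $f$ and $g$ have degree at most $n$ and their coefficients satisfy~(\ref{condO}) for $i = 0, 1, \dots, n$. Regarding the same system as a polynomial system of degree at most $n+1$, I would verify the two conditions defining $\mathbb{M}_{n+1}$, namely~(\ref{condO}) and~(\ref{condM}) read at level $n+1$. The $\mathbb{S}_n$ hypothesis already supplies the sign conditions in~(\ref{condO}) for indices up to $n$; the only additional constraints involve coefficients of total degree $n+1$ — the entries $a_{0,n+1}$, $b_{n+1,0}$ in the top case of~(\ref{condO}) and every pair $a_{i,n+1-i}$, $b_{i,n+1-i}$ in~(\ref{condM}). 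All of these vanish because $f$ and $g$ have degree at most $n$, so the new inequalities read $0 \ge 0$ and $0 \le 0$ and hold trivially. Hence the system belongs to $\mathbb{M}_{n+1}$, and the alternating chain in (b) follows for all $n \ge 2$.

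As an alternative I would mention the constructive version, which makes transparent why the index must rise by exactly one. In the realization used in the proof of Lemma~\ref{lemma1}, each monomial of $f$ or $g$ is produced by a reaction that alters the count of a single species by $\pm 1$; a reaction realizing a monomial $x^i y^j$ with $i + j \le n$ therefore has at most $n$ reactants and at most $i + j + 1 \le n+1$ products, so the whole network is automatically $(n+1)$-molecular. I do not expect a genuine obstacle here: the only delicate point is the bookkeeping of the degree-$(n+1)$ coefficients, confirming they are forced to be zero so that both~(\ref{condO}) and~(\ref{condM}) are satisfied vacuously at the top degree.
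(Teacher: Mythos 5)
Your proposal is correct, and part (a) coincides with the paper's proof (both cite Definitions~\ref{defonset}, \ref{defmnset} and~\ref{defwnset}). For part (b), however, your primary argument takes a genuinely different route. The paper proves ${\mathbb S}_n \subset {\mathbb M}_{n+1}$ exactly by your \emph{alternative}: it reuses the explicit realization from the proof of Lemma~\ref{lemma1}, in which every monomial is produced by a reaction changing one species count by $\pm 1$, and observes that such a reaction with $i+j \le n$ has at most $i+j+1 \le n+1$ products, so the whole network is $(n+1)$-molecular. Your main argument instead stays at the level of coefficients: it invokes the characterization of ${\mathbb M}_{n+1}$ by conditions~(\ref{condO}) and~(\ref{condM}) read at degree $n+1$, and notes that every newly imposed inequality involves only coefficients of total degree $n+1$, which vanish for a system of degree at most $n$, so they hold vacuously ($0 \ge 0$ and $0 \le 0$). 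This is legitimate and non-circular, since that characterization lemma is stated and proved before Lemma~\ref{setproperties} and independently of it. What each approach buys: yours is shorter and purely algebraic, delegating all constructive work to the already-established characterization (whose sufficiency proof does the heavy lifting, including the two-reaction splitting for top-degree monomials, which in your situation is applied only vacuously); the paper's is self-contained at the network level and needs only Lemma~\ref{lemma1}, never the ${\mathbb M}_n$ characterization. Both correctly treat the inclusions as statements about realizable ODE systems rather than literal inclusions of network sets --- a point worth keeping explicit, since a network in ${\mathbb S}_n$ may have reactions with arbitrarily many products, and the inclusion only asserts the existence of \emph{some} $(n+1)$-molecular realization of the same dynamics.
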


\begin{proof}
{\rm (a)} This is a direct consequence of Definitions~\ref{defonset}, \ref{defmnset} and~\ref{defwnset}~of ${\mathbb W}_{\!n},$ ${\mathbb M}_n$ and ${\mathbb S_n}.$

\noindent
{\rm (b)} We show that ${\mathbb S}_n \subset {\mathbb M}_{n+1}$, as follows. Recall that in the proof of Lemma~\ref{lemma1} above we have been able to obtain the polynomial right-hand side of a chemical system in ${\mathbb S}_n$ by using only reactions of the form  $iX + jY \to (i-1)X + jY$, $iX + jY \to (i+1)X + jY$, $iX + jY \to iX + (j-1)Y$, or $iX + jY \to iX + (j+1)Y$, with $i+j \le n$. Note now that for these types of reactions the largest stoichiometric coefficients are either $i+1$ and $j$, or $i$ and $j+1$, and we have $i+1+j = i+j+1 \le \ n+1$. This implies that any system that belongs to ${\mathbb S}_n$ also belongs to ${\mathbb M}_{n+1}$. 
\end{proof}

\section{Hilbert number and its analogues for sets ${\mathbb S}_n$, ${\mathbb M}_n$ and ${\mathbb  W}_{\!n}$}

\label{sec3}

Let $H(n)$ be the maximum number of limit cycles for planar {\rm ODE} systems in the form (\ref{odex_general})--(\ref{odey_general}), where $f$ and $g$ are polynomials of degree at most $n$ given by $(\ref{fg_with_coefficients})$.
Then $H(n)$ is often called the Hilbert number, because it can be used to formulate Hilbert's 16th problem~\cite{Christopher:2007:LCD}.
By constructing polynomial ODE systems with a specific number of limit cycles, lower bounds on the Hilbert number $H(n)$ have been obtained:
for example, a quadratic system with 4~limit cycles~\cite{Shi:1980:CEE}, a cubic system with 13 limit cycles~\cite{Li:2009:CST} and a quartic system with 28~limit cycles~\cite{Prohens:2018:NLB} have been constructed in the literature, giving $H(2) \ge 4$, $H(3) \ge 13$ and $H(4) \ge 28$. On the other hand, one can show that quadratic systems which can be written as chemical systems corresponding to bimolecular systems ${\mathbb M}_2$ cannot  have limit cycles~\cite{Pota:1983:TBS,Schuman:2003:LCT}. To formulate our results and put them into context with the literature, we first define the corresponding counterparts of the Hilbert number $H(n)$ for subsets of the polynomial ODE systems corresponding to sets ${\mathbb S}_n$, ${\mathbb M}_n$ and ${\mathbb  W}_{\!n}$.

\vskip 2mm

\begin{definition}
We denote by $S(n)$ the maximum number of limit cycles of {\rm ODEs} in the set $\,{\mathbb S}_n$ of the $n$-th order chemical reaction networks, by $M(n)$ the maximum number of limit cycles of {\rm ODEs} in the set $\,{\mathbb M}_n$ of $n$-molecular chemical reaction networks, and by $W{\hskip -0.3mm}(n)$ the maximum number of limit cycles of {\rm ODEs} in the set ${\mathbb W}_{\!n}$ of weakly reversible chemical reaction networks.
\label{defOMW}
\end{definition}

\begin{table}
\begin{tabular}{|c|c|c|c|c|}
\hline
\raise -1.4mm \hbox{\rule{0pt}{4.6mm}} $\qquad\quad$ {\color{red} $n$} $\qquad\quad\;$ & $\qquad\quad\;$ {\color{red} $W{\hskip -0.2mm}(n)$} $\qquad\quad\;$ & $\qquad\quad\;$ {\color{red} $M(n)$} $\qquad\quad\;$ & $\qquad\quad\;${\color{red} $S(n)$} $\qquad\quad\;$ & $\qquad\quad\;$ {\color{red} $H(n)$} $\qquad\quad\;$
\\
\hline
\hline
\raise -1.4mm \hbox{\rule{0pt}{4.6mm}}{\color{red} 2} & = 0 & = 0 & $\ge$ 3 & $\ge$ 4  \\
\hline
\raise -1.4mm \hbox{\rule{0pt}{4.6mm}}{\color{red} 3} & $\ge$ 3 & $\ge$ 3 & $\ge$ 6 & $\ge$ 13 \\
\hline
\raise -1.4mm \hbox{\rule{0pt}{4.6mm}}{\color{red} 4} & $\ge 6$ & $\ge 6$ & $\ge 13$ & $\ge$ 28 \\
\hline
\hline
\raise -1.4mm \hbox{\rule{0pt}{4.6mm}}{\color{red} as $n \to \infty$} & $\ge {\mathcal O}(n^2 log(n))$ & $\ge {\mathcal O}(n^2 log(n))$ & $\ge {\mathcal O}(n^2 log(n))$ & $\ge {\mathcal O}(n^2 log(n))$  \\
\hline
\end{tabular}
\vskip 2mm
\caption{{\it Some values and estimates from below on numbers $W{\hskip -0.3mm}(n)$, $M(n)$, $S(n)$ and $H(n)$, see Lemmas~$\ref{lemmaM2W2}$, $\ref{lemmaS2S3S4M3M4W3W4}$, $\ref{lemmaHn}$ and~$\ref{lemmaasym}$.}}
\label{table1}
\end{table}

\vskip 2mm

\noindent
Some inequalities between numbers $W{\hskip -0.3mm}(n),$ $M(n)$, $S(n)$ and $H(n)$ are stated as our next lemma.

\vskip 2mm

\begin{lemma}
We have \hskip 2mm
{\rm (a)}
$W{\hskip -0.3mm}(n) \le M(n) \le S(n) \le H(n)$
for all $n \in {\mathbb N}\,$, 
\hfill\break
\rule{0pt}{1pt} \hskip 34.5mm 
{\rm (b)}
$ H(n) \le S(n+1)$ for all $n \in {\mathbb N}\,$,
\hfill\break
\rule{0pt}{1pt} \hskip 34.5mm 
{\rm (c)} \hskip 0.6mm
$S(n) \,\le M(n+1)$ for all $n \in {\mathbb N}\,$,
\hfill\break
\rule{0pt}{1pt} \hskip 34.5mm 
{\rm (d)} 
$H(n) \le W(n+3)$ for all $n \in {\mathbb N}\,$.
\label{WMNineq}
\end{lemma}

\begin{proof} 
{\rm (a)}  
The first two inequalities follow directly from Lemma~\ref{setproperties}(a) and Definition~\ref{defOMW} of numbers $W{\hskip -0.3mm}(n),$ $M(n)$ and $S(n)$. The last inequality is a direct consequence of the definition of the Hilbert number $H(n)$.

\smallskip

\noindent
{\rm (b)} 
First, it is easy to prove a weaker inequality that $H(n) \le S(n+2)$. For this, we observe that any $n$-degree polynomial ODE system~(\ref{odex_general})--(\ref{odey_general}) can be transformed into a chemical system of at most $(n+2)$-th order by shifting all limit cycles to the positive quadrant $[0,\infty)\times[0,\infty)$ and by multiplying both right-hand sides by $xy$. Since this corresponds to a rescaling of time in the original system, both the original system and the transformed system in ${\mathbb S}_{n+2}$ will have the same number of limit cycles. This implies that $H(n) \le S(n+2)$. 

In order to prove that $H(n) \le S(n+1)$ we  rely on the recent work of Plesa~\cite{Plesa:2024:MDS}, and also on the recent work of Gasull and Santana~\cite{Gasull:2024:NHP}.
Consider first the case $H(n) < \infty$. Then, according to Theorem~2 in~\cite{Gasull:2024:NHP}, there  exist ODE systems of degree $n$ with exactly $H(n)$ {\em hyperbolic} limit cycles. Then these limit cycles persist after small perturbations of the functions on the right-hand side of the ODEs; therefore, according to the results in~\cite{Plesa:2024:MDS},  we can transform these ODEs into {chemical} systems of at most $(n+1)$-th order which still have at least $H(n)$ limit cycles. This implies the desired inequality for the case $H(n) < \infty$. If $H(n) = \infty$, then, again according to Theorem~2 in~\cite{Gasull:2024:NHP}, for any $k>0$ there  exist ODE systems of degree $n$ with at least $k$ hyperbolic limit cycles, and, like before, this implies that $S(n+1) = \infty$.  

\smallskip

\noindent
{\rm (c)} This inequality follows directly from Lemma~\ref{setproperties}(b). Note that these inequalities also trivially hold for $n=1$, because there are no limit cycles in linear systems, {\it i.e.,} $W(1)=M(1)=S(1)=H(1)=0$.

\smallskip
 
\noindent
{\rm (d)} Consider an ODE system of degree $n$ that has $H(n)$ limit cycles. Without loss of generality we can assume that the limit cycles lie in the positive quadrant, because we can shift this system via a linear change of variables, and the shift does not affect the degree. Let us now multiply the right-hand side of this system by $xy$ as in the proof of part~{\rm (b)}. The resulting system belongs to ${\mathbb S}_{n+2}$ and has the same $H(n)$ limit cycles. According to results in~\cite{Gasull:2024:NHP}, there exists a small perturbation of this system that has $H(n)$ {\em hyperbolic} limit cycles in the positive orthant, such that the monomials on the right-hand side of the perturbed system are the same as the monomials of the original unperturbed system. Therefore, the perturbed system still belongs to ${\mathbb S}_{n+2}$. Recall now that ${\mathbb S}_{n+2} \subset {\mathbb M}_{n+3}$. Now we do a second small perturbation of this system, in order to bring it from ${\mathbb M}_{n+3}$ to ${\mathbb W}_{n+3}$. In this perturbation we simply make each reaction reversible, but with a very small reaction rate constant for any new reaction that we add to the network. If these rate constants are small enough it follows that we obtain a system in ${\mathbb W}_{n+3}$ that has at least $H(n)$ limit cycles, and therefore $H(n) \le W(n+3)$.
\end{proof}

\medskip

\noindent
 Some lower bounds on numbers $W{\hskip -0.3mm}(n),$ $M(n),$ $S(n)$ and $H(n)$ can also be established. They are summarized in Table~\ref{table1} and stated in lemmas below.

\medskip

\begin{lemma}
The {\rm ODEs} in set ${\mathbb M}_2$ cannot have any limit cycles, {\it i.e.} we have $M(2) = 0$ and $W(2)=0.$ 
\label{lemmaM2W2}
\end{lemma}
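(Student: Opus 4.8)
The plan is to apply the Bendixson--Dulac criterion on the open positive quadrant $Q=(0,\infty)\times(0,\infty)$ with the weight function $B(x,y)=1/(xy)$. First I would dispose of $W(2)$: since $\mathbb{W}_2\subset\mathbb{M}_2$ by Lemma~\ref{setproperties}(a), establishing $M(2)=0$ immediately yields $W(2)=0$, so it suffices to treat $\mathbb{M}_2$. I would then write a general element of $\mathbb{M}_2$ as the quadratic system $(\ref{odex_general})$--$(\ref{odey_general})$ with $f,g$ as in $(\ref{fg_with_coefficients})$ for $n=2$, and record the sign constraints: by the characterization of $\mathbb{M}_2$ via $(\ref{condO})$ and $(\ref{condM})$ we have $a_{0,0},a_{0,1},a_{0,2},b_{0,0},b_{1,0},b_{2,0}\ge 0$ together with $a_{2,0}+b_{2,0}\le 0$ and $a_{0,2}+b_{0,2}\le 0$, the latter two combined with nonnegativity forcing $a_{2,0}\le 0$ and $b_{0,2}\le 0$.

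The core step is to compute $\partial_x(Bf)+\partial_y(Bg)$ on $Q$. Expanding, one finds that the constant and pure-linear monomials drop out under differentiation, leaving a combination of the eight monomials $x^{-2}y^{-1},\,x^{-2},\,y^{-1},\,x^{-2}y,\,x^{-1}y^{-2},\,y^{-2},\,xy^{-2},\,x^{-1}$ with respective coefficients $-a_{0,0},\,-a_{0,1},\,a_{2,0},\,-a_{0,2},\,-b_{0,0},\,-b_{1,0},\,-b_{2,0},\,b_{0,2}$. Each coefficient is nonpositive by the sign constraints above, and each monomial is strictly positive on $Q$; hence $\partial_x(Bf)+\partial_y(Bg)\le 0$ throughout the simply connected region $Q$. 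Applying the divergence theorem to the region enclosed by any hypothetical periodic orbit $\Gamma\subset Q$, and using that $Bf\,dy-Bg\,dx$ integrates to zero along a trajectory, the area integral of this divergence would have to vanish, which is impossible unless the divergence is identically zero on the enclosed open set. This rules out limit cycles in $Q$ except in the degenerate case.

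Finally I would close off the degenerate case and the boundary. If the divergence vanishes identically, all eight coefficients above must be zero, and the system collapses to the Lotka--Volterra form $f=x(a_{1,0}+a_{1,1}y)$, $g=y(b_{0,1}+b_{1,1}x)$, which admits the first integral $V=a_{1,0}\ln y+a_{1,1}y-b_{0,1}\ln x-b_{1,1}x$; every periodic orbit then lies on a level curve of $V$ and is non-isolated, so no limit cycle exists. For the boundary, the constraints give $f(0,y)\ge 0$ and $g(x,0)\ge 0$, so the flow points into the closed positive quadrant along the axes, the quadrant is forward invariant, and a periodic orbit cannot cross an axis; in the subcase where an axis is invariant the induced boundary dynamics is one-dimensional and carries no periodic orbit.

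I expect the main obstacle to be precisely this bookkeeping: verifying that the divergence of $(Bf,Bg)$ is sign-definite using exactly the constraints $(\ref{condO})$--$(\ref{condM})$, and then carefully closing the degenerate Lotka--Volterra case, where the Bendixson--Dulac argument is inconclusive on its own and one must instead invoke the first integral to exclude isolated periodic orbits.
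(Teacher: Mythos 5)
Your proof is correct, but it does something genuinely different from the paper: the paper does not prove $M(2)=0$ at all, it cites P\'ota, Tyson--Light and Schuman--T\'oth for that fact and then deduces $W(2)=0$ from $W(2)\le M(2)$ (Lemma~\ref{WMNineq}), exactly as you do in your first step. What you have written is a self-contained reconstruction of the classical argument behind those citations --- indeed the Dulac function $B=1/(xy)$ on the positive quadrant is precisely the device used in that literature. Your bookkeeping checks out: conditions~(\ref{condO}) and~(\ref{condM}) with $n=2$ give $a_{0,0},a_{0,1},a_{0,2},b_{0,0},b_{1,0},b_{2,0}\ge 0$ and $a_{2,0}\le -b_{2,0}\le 0$, $b_{0,2}\le -a_{0,2}\le 0$; the divergence $\partial_x(Bf)+\partial_y(Bg)$ is exactly the eight-term Laurent expression you list (the coefficients $a_{1,0},a_{1,1},b_{0,1},b_{1,1}$ do not appear), hence is $\le 0$ on $Q$; and since those eight Laurent monomials are linearly independent, vanishing on the open region enclosed by a hypothetical cycle forces all eight coefficients to vanish, collapsing the system to Lotka--Volterra form, for which your $V$ is indeed a first integral. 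Three minor points. First, your phrase that the ``constant'' monomials drop out under differentiation is misstated --- $a_{0,0}$ and $b_{0,0}$ do contribute, as your own list shows --- but this is cosmetic. Second, the step ``lies on a level curve of $V$, hence is non-isolated'' deserves one more line: since $(f,g)=x\,y\,(\partial V/\partial y,\,-\partial V/\partial x)$, the gradient $\nabla V$ is nonzero along any non-constant periodic orbit in $Q$, so the orbit sits inside a period annulus of level curves of $V$; alternatively one can simply invoke, as the paper does, the known fact that Lotka--Volterra systems are conservative and have no limit cycles. Third, your Dulac-plus-boundary analysis excludes limit cycles only in the closed nonnegative quadrant; cycles lying entirely outside it are not addressed, but this is consistent with the paper's implicit convention that limit cycles of chemical systems are counted in the physically relevant quadrant.
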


\begin{proof}
See  P\'ota~\cite{Pota:1983:TBS}, Tyson and Light~\cite{tyson1973properties}, Schuman and T\'oth~\cite{Schuman:2003:LCT} for the proof of $M(2)=0$. Using Lemma~\ref{WMNineq}, we have $W(2) \le M(2)$, which implies $W(2)=0.$   
\end{proof}

\medskip

\begin{lemma}
We have $S(2) \ge 3,$ $S(3) \ge 6$, $S(4) \ge 13,$ $M(3) \ge 3$, $M(4) \ge 6$, $W(3) \ge 3$ and $W(4) \ge 6.$
\label{lemmaS2S3S4M3M4W3W4}
\end{lemma}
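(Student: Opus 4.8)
The plan is to prove all seven inequalities by exhibiting, for each, a chemical reaction network in the appropriate class whose reaction rate equations carry the stated number of limit cycles; the two $M$-bounds will then come for free from the $S$-bounds. The single tool underlying every construction is that multiplying both right-hand sides of a planar system by a polynomial $p(x,y)$ that is strictly positive on the open positive quadrant merely reparametrizes time along trajectories there, and so leaves the number and position of the limit cycles lying in the positive quadrant unchanged. By Lemma~\ref{lemma1}, membership in $\mathbb{S}_n$ is the purely sign-theoretic condition \eqref{condO} on the pure-power coefficients, so the game is to choose $p$ (and an affine normalization of the seed system) that enforces \eqref{condO}, and later \eqref{condM} and weak reversibility, while keeping the degree as small as possible. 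The generic choice $p=xy$ annihilates all pure-power coefficients at once but costs two extra degrees (this is exactly the inequality $H(n)\le S(n+2)$ in Lemma~\ref{WMNineq}); the economical choices $p=x$ or $p=y$ cost only one extra degree, but require the seed system to already satisfy half of \eqref{condO}.

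For the base bound $S(2)\ge 3$ I would take a quadratic planar system known to carry three limit cycles, apply an affine change of coordinates (and, if needed, a reflection $x\mapsto -x$ or $y\mapsto -y$) to place all three cycles in the open positive quadrant and to make the resulting quadratic polynomials satisfy \eqref{condO}; this exhibits a network in $\mathbb{S}_2$ with three limit cycles. Since $\mathbb{S}_2\subset\mathbb{M}_3$ by Lemma~\ref{setproperties}(b) and $S(2)\le M(3)$ by Lemma~\ref{WMNineq}(b), this at once yields $M(3)\ge 3$. For $S(3)\ge 6$ I would produce a cubic system that satisfies \eqref{condO} outright and carries six limit cycles, so that no multiplication is needed and the degree stays at three; then $S(3)\le M(4)$ gives $M(4)\ge 6$. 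For $S(4)\ge 13$ I would instead start from a cubic system realizing the classical record $H(3)\ge 13$, normalize it affinely so that its thirteen cycles lie in the positive quadrant and so that the pure-$y$ coefficients of $f$ are nonnegative (half of \eqref{condO}), and multiply both right-hand sides by the single factor $y$; this costs only one extra degree, enforces the full condition \eqref{condO}, and preserves all thirteen cycles, placing the system in $\mathbb{S}_4$.

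For the weakly reversible bounds $W(3)\ge 3$ and $W(4)\ge 6$ the idea is to render the networks just constructed weakly reversible without leaving $\mathbb{M}_3$, respectively $\mathbb{M}_4$. Here I would use the mechanism illustrated in the Example of Section~\ref{sec2}: multiplying the rate equations by a polynomial with nonnegative coefficients replaces the associated E-graph by several translated copies of it, and if these copies close every edge into an oriented cycle one obtains a weakly reversible network realizing the same positive-quadrant dynamics; equivalently, one re-realizes the same mass-action vector field by a weakly reversible network. In either formulation the multiplier and the added reactions must be chosen so that no reaction exceeds molecularity $3$, respectively $4$.

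The main obstacle is precisely this degree and molecularity bookkeeping: the universal reparametrization trick is cheap in cycles but expensive in degree, so the substance of the proof is to show that the cheaper, degree-one (for the $S$-bounds) and molecularity-preserving (for the $W$-bounds) operations can actually be carried out. For $S(4)\ge 13$ this means verifying that a thirteen-cycle cubic can be affinely normalized to satisfy half of \eqref{condO} — the leading pure-power coefficient is fixable by a reflection, but the remaining ones must be controlled simultaneously with the requirement that the cycles stay in the positive quadrant — and for $W(3)$ and $W(4)$ it means exhibiting a weakly reversible completion of the E-graph whose new vertices remain inside the $n$-molecular box. I expect the weak-reversibility step to be the most delicate, because the completion used in the Example raises molecularity by three, so a more economical, structure-specific completion is needed to stay inside $\mathbb{M}_3$ and $\mathbb{M}_4$.
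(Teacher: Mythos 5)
Your proposal gets the $M$-bounds right (via Lemma~\ref{WMNineq}(b), exactly as the paper does), but both of the substantive steps have genuine gaps. For the $S$-bounds, everything hinges on an unjustified normalization claim: that a quadratic system with three limit cycles, respectively the cubic system with thirteen limit cycles, can be moved by an affine change of variables so that the cycles lie in the positive quadrant \emph{and} condition~\eqref{condO} (or its ``$y$-half'', before multiplying by $y$) holds. No argument is given, and this cannot be a general principle: if an affine normalization could force~\eqref{condO} on an arbitrary quadratic system, the same trick applied to Shi's four-cycle quadratic system would yield $S(2)\ge H(2)\ge 4$, which is not known; the restrictiveness of~\eqref{condO} is precisely the content of the lemma. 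Likewise, for $S(3)\ge 6$ you write that you would ``produce a cubic system that satisfies~\eqref{condO} outright and carries six limit cycles,'' which simply restates what must be proved. The paper closes these gaps by citation of dedicated constructions: Escher's quadratic mass-action systems with three limit cycles give $S(2)\ge 3$, and for $S(3)\ge 6$ and $S(4)\ge 13$ it uses \emph{Kolmogorov} systems from the literature (a cubic one with six limit cycles and a quartic one with thirteen, both with all cycles in the positive quadrant). Since a Kolmogorov system has the form $\dot x = xP(x,y)$, $\dot y = yQ(x,y)$, one has $f(0,y)\equiv 0$ and $g(x,0)\equiv 0$, so~\eqref{condO} holds automatically and Lemma~\ref{lemma1} applies with no normalization or degree-raising multiplication at all. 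Note in particular that the paper does \emph{not} obtain $S(4)\ge 13$ from the cubic $H(3)\ge 13$ example, as you attempt; it uses a genuinely quartic Kolmogorov system.

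For $W(3)\ge 3$ and $W(4)\ge 6$ your approach asks for too much: you insist on re-realizing the \emph{same} vector field by a weakly reversible network within molecularity $3$, respectively $4$, and you yourself flag that you cannot carry this out. The paper's key idea, which the proposal misses, is that exact realization is unnecessary because ${\mathbb W}_{\!n}$ only needs to contain \emph{some} system with that many limit cycles: starting from the networks in ${\mathbb M}_3$ and ${\mathbb M}_4$ already constructed, one adds the reverse of every reaction with an arbitrarily small rate constant $\varepsilon$. The reverse of a reaction whose complexes have molecularity at most $n$ again has complexes of molecularity at most $n$, so the enlarged network stays in ${\mathbb M}_3$, respectively ${\mathbb M}_4$, and is reversible, hence weakly reversible; the resulting ODE system is a small perturbation of the original one, and hyperbolic limit cycles persist under small perturbations. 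This replaces your infeasible exact-realization problem with a routine structural-stability argument (and it is exactly the step that breaks down for \emph{algebraic} limit cycles, which is why the paper needs the separate constructions of Section~\ref{sec4}).
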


\begin{proof}
The fact that $S(2) \ge 1$ has been known for more than 80 years~\cite{frank-kamenetsky:1943:possibility}, and is considered an important classical example in the mathematical theory of autocatalysis. The improved lower bound $S(2) \ge 3$ is due to Escher~\cite{escher1981bifurcation}. The results $S(3) \ge 6$ and $S(4) \ge 13$ have been established in the literature on Kolmogorov systems~\cite{Lloyd:2002:CKS,Carvalho:2023:NLB}.  A planar cubic (resp. quartic) ODE system with six (resp. thirteen) limit cycles in the positive quadrant has been presented in~\cite{Lloyd:2002:CKS} (resp. \cite{Carvalho:2023:NLB}). Applying Lemma~\ref{lemma1} to these systems, we conclude that there is a chemical system in ${\mathbb S}_3$  with six limit cycles and a chemical system in ${\mathbb S}_4$ with thirteen limit cycles, giving $S(3) \ge 6$ and $S(4) \ge 13$. Using Lemma~\ref{WMNineq}(c), we get $M(3) \ge S(2) \ge 3$ and $M(4) \ge S(3) \ge 3.$ Finally, we can add reactions with very small values of rate constants to make the corresponding systems (weakly) reversible. Such small perturbations will not change the existence of hyperbolic limit cycles~\cite{Smale:1974:DED,Perko:2013:DED}, giving $W(3) \ge 3$ and $W(4) \ge 6.$
\end{proof}

\begin{lemma}
We have $H(2) \ge 4$, $H(3) \ge 13$, $H(4) \ge 28$ and $H(n) \ge {\mathcal O}(n^2 log(n))$ as $n \to \infty.$
\label{lemmaHn}
\end{lemma}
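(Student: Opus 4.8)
The plan is to prove all four bounds by invoking explicit polynomial ODE systems of the relevant degree that realise the claimed number of limit cycles; since $H(n)$ is defined as a maximum over all systems of the form~(\ref{odex_general})--(\ref{odey_general}) with $f,g$ as in~(\ref{fg_with_coefficients}), each such example yields the corresponding lower bound directly. The three finite-degree cases are precisely the constructions already flagged in the introduction: for $H(2) \ge 4$ I would cite Shi's quadratic system carrying four limit cycles~\cite{Shi:1980:CEE}; for $H(3) \ge 13$ the cubic construction of~\cite{Li:2009:CST}; and for $H(4) \ge 28$ the quartic system of~\cite{Prohens:2018:NLB}. In each case the cited work produces a degree-$n$ vector field and verifies the stated count, so nothing further is required beyond recording that these examples lie in the class over which $H(n)$ is maximised.

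For the asymptotic statement $H(n) \ge {\mathcal O}(n^2 \log(n))$ as $n \to \infty$, the plan is to invoke the perturbation-theoretic construction of Christopher and Lloyd, which bifurcates limit cycles from the periodic annuli of suitably chosen Hamiltonian (or centre) systems and yields a family of degree-$n$ fields whose number of limit cycles grows at least like $n^2 \log n$. Concretely, one takes a near-Hamiltonian perturbation, counts zeros of the associated higher-order Melnikov (averaged) functions, and optimises the degree bookkeeping; the outcome is a constant $c>0$ with the number of limit cycles at least $c\, n^2 \log n$ for all large $n$. Following the paper's convention, the symbol ${\mathcal O}(\cdot)$ here is read as a lower bound, i.e. the assertion is $H(n) \ge c\, n^2 \log n$ eventually.

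Since every ingredient is borrowed from existing work, there is no genuine analytic obstacle internal to this lemma; the only points requiring care are (a) confirming that each cited example is a bona fide planar polynomial system of the stated degree, so that its cycle count legitimately bounds $H(n)$ from below, and (b) matching the order of growth in the cited asymptotic result to the form $n^2 \log n$ and fixing the interpretation of the ${\mathcal O}$-notation as a lower bound. These are matters of bookkeeping and notation rather than of proof, which is why I expect the statement to follow essentially by assembling the appropriate references.
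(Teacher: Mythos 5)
Your proposal is correct and takes essentially the same route as the paper: the paper's proof is exactly the citation list you assembled, namely \cite{Shi:1980:CEE} for $H(2)\ge 4$, \cite{Li:2009:CST} for $H(3)\ge 13$, \cite{Prohens:2018:NLB} for $H(4)\ge 28$, and the literature (the paper cites the Christopher--Li monograph \cite{Christopher:2007:LCD}, which presents the Christopher--Lloyd growth result you describe) for $H(n)\ge {\mathcal O}(n^2\log n)$. Your reading of the ${\mathcal O}$-notation as a lower bound $c\,n^2\log n$ also matches the paper's intended convention.
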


\begin{proof}
See~\cite{Shi:1980:CEE} for $H(2) \ge 4$, \cite{Li:2009:CST} for $H(3) \ge 13$, \cite{Prohens:2018:NLB} for $H(4) \ge 28$ and~\cite{Christopher:2007:LCD} for the asymptotic inequality $H(n) \ge {\mathcal O}(n^2 log(n))$ as $n \to \infty.$
\end{proof}

\medskip

\begin{lemma}
We have $S(n) \ge {\mathcal O}(n^2 log(n))$, 
$M(n) \ge {\mathcal O}(n^2 log(n))$ 
and $W(n) \ge {\mathcal O}(n^2 log(n))$, as $n \to \infty.$
\label{lemmaasym}. 
\end{lemma}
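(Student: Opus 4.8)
The plan is to obtain the three bounds from two different sources. The estimates for $S(n)$ and $M(n)$ follow by bootstrapping from the Hilbert number through Lemma~\ref{WMNineq}, whereas $W(n)\ge\mathcal O(n)$ requires an explicit scalable construction. For the first two, Lemma~\ref{WMNineq}(a) gives $H(m)\le S(m+2)$, hence $S(n)\ge H(n-2)$ for $n\ge 3$, and Lemma~\ref{lemmaHn} yields $H(n-2)\ge\mathcal O((n-2)^2\log(n-2))$; since $(n-2)^2\log(n-2)\sim n^2\log n$ as $n\to\infty$, we get $S(n)\ge\mathcal O(n^2\log n)$. Likewise Lemma~\ref{WMNineq}(b) gives $S(m)\le M(m+1)$, so $M(n)\ge S(n-1)\ge\mathcal O((n-1)^2\log(n-1))=\mathcal O(n^2\log n)$. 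These are routine index shifts.

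The substance is $W(n)\ge\mathcal O(n)$, for which I would build, for each $n$, a single degree-$n$ chemical network in $\mathbb M_n$ carrying $\Theta(n)$ nested hyperbolic limit cycles in the open positive quadrant. Fixing a positive equilibrium $(x_0,y_0)$ and writing $u=x-x_0$, $v=y-y_0$, the radial template $\dot u=-v-u\,q(u^2+v^2)$, $\dot v=u-v\,q(u^2+v^2)$ with $q(s)=\prod_{j=1}^k(s-\rho_j)$ and $0<\rho_1<\cdots<\rho_k$ small has the $k$ circles $u^2+v^2=\rho_j$ as hyperbolic limit cycles of alternating stability, while its degree is $2k+1$; taking $n=2k+1$ gives $k=(n-1)/2=\Theta(n)$ cycles. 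After adjusting the lower-order terms (or replacing the template by an equivalent Kolmogorov system $\dot x=x\,F$, $\dot y=y\,G$, which renders the positive quadrant invariant automatically) so that the coefficient conditions (\ref{condO}) and (\ref{condM}) hold, this system is realized as a network in $\mathbb M_n$.

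I would then make the network weakly reversible exactly as in the proof of Lemma~\ref{lemmaS2S3S4M3M4W3W4}: adjoin to each reaction its reverse with a sufficiently small rate constant. The reverse of a reaction in $\mathbb M_n$ merely interchanges the reactant and product complexes, both of molecularity at most $n$, so the enlarged network remains in $\mathbb M_n$; and since every edge of its E-graph now lies on the two-cycle formed with its reverse, the network is weakly reversible, hence in $\mathbb W_n$. As all $k=\Theta(n)$ limit cycles are hyperbolic and lie in a fixed compact subset of the open positive quadrant, structural stability~\cite{Smale:1974:DED,Perko:2013:DED} ensures that for small enough reverse rate constants every one of them persists, giving $W(n)\ge\Theta(n)=\mathcal O(n)$.

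The main obstacle is the middle step: fitting a family with a growing number of \emph{hyperbolic} cycles into the chemical realizability inequalities (\ref{condO})--(\ref{condM}) of $\mathbb M_n$ while keeping all cycles inside the positive quadrant. The radial template is transparent but not manifestly of chemical type, so the real work is to deform its lower-order terms (or exhibit a Kolmogorov system with a positive interior equilibrium encircled by the prescribed number of cycles) without destroying hyperbolicity; once that is secured, the reverse-reaction perturbation is comparatively routine, and it is precisely the limitation to a construction whose hyperbolicity we can certify that yields only the linear bound $\mathcal O(n)$ rather than the quadratic growth available for $M(n)$.
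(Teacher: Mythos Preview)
Your treatment of $S(n)$ and $M(n)$ is correct and matches the paper exactly: both use the index shifts through Lemma~\ref{WMNineq} together with the asymptotic bound on $H(n)$ from Lemma~\ref{lemmaHn}.

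For $W(n)\ge\mathcal O(n)$ your route diverges from the paper's, and it has a genuine gap. You propose a radial template with $k$ nested hyperbolic circles and then say one should ``adjust the lower-order terms'' or pass to ``an equivalent Kolmogorov system'' so that conditions~(\ref{condO}) and~(\ref{condM}) hold---but you do not carry this out, and you yourself flag it as the main obstacle. That step is not a formality: after shifting the centre to $(x_0,y_0)$, the coefficients $a_{0,i}$ and $b_{i,0}$ appearing in~(\ref{condO}) are determined by the full expansion of the shifted polynomial and there is no evident reason they come out nonnegative; passing to Kolmogorov form $\dot x=xF$, $\dot y=yG$ forces~(\ref{condO}) but destroys the radial structure, so you would need a separate argument that $k=\Theta(n)$ hyperbolic cycles survive. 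As written, the proposal is a plausible programme rather than a proof.

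The paper does not attempt any such deformation argument. Instead it defers to Section~\ref{sec4}: Theorem~\ref{theoremWam} constructs, for every $N$, an explicit \emph{reversible} system of order $4N+2$ whose algebraic curve $h_0=\prod_{i=1}^N h_i=0$ yields $N$ algebraic limit cycles, verified via the transversality criterion of Theorem~\ref{theorgen}. This gives $W^a(4N+2)\ge N$, and since trivially $W(n)\ge W^a(n)$, one obtains $W(n)\ge\mathcal O(n)$. The paper's construction is fully explicit and avoids perturbation entirely; your approach, if completed, would have the advantage of not needing the algebraic-curve machinery, but the missing realizability step is exactly where the difficulty lies.
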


\begin{proof}
Using Lemma~\ref{WMNineq}, we get $S(n) \ge H(n-1)$, $M(n) \ge S(n-1) \ge H(n-2)$ and $W(n) \ge H(n-3).$ The results then follow by applying the asymptotic inequality for $H(n)$ given in Lemma~\ref{lemmaHn}.
\end{proof}

\vskip 1mm

\noindent
The analysis of ODE systems with limit cycles which are used to achieve lower bounds in Table~\ref{table1} often cannot be supported by illustrative numerical simulations, because some parameter values are negligible (beyond the machine precision) when compared to other parameter values. However, there are also chemical systems in the literature with multiple limit cycles, where the phase plane can be computed using standard numerical methods. For example, the third-order system in ${\mathbb S}_3$ with three limit cycles, two stable and one unstable, is presented in~\cite{Plesa:2017:TMS}, and a $3$-molecular chemical system in ${\mathbb M}_3$ with two limit cycles, one stable and one unstable is studied in~\cite{nagy2020two}. 

In the following sections we will restrict our investigations to {\em algebraic} limit cycles, with a counterpart of Table~\ref{table1} for algebraic limit cycles presented in Section~\ref{sec4}. We also introduce a general approach in Theorem~\ref{theorem1} in Section~\ref{sec5} to obtain chemical systems where we will be able to calculate their phase planes with multiple (algebraic) limit cycles and present some illustrative numerical results.

\section{Chemical systems with algebraic limit cycles}

\label{sec4}

The analogues of the Hilbert number $H(n)$ for  polynomial ODE systems corresponding to the sets ${\mathbb S}_n$, ${\mathbb M}_n$ and ${\mathbb  W}_{\!n}$ have been given in Definition~\ref{defOMW}. In this section, we will focus on algebraic limit cycles~\cite{Chavarriga:2004:ALC,Gasull:2023:NLC}. An algebraic limit cycle is not only a closed isolated solution of the ODE system, but it can also be represented as a closed component of an algebraic curve $h(x,y)=0$, where $h$ is a polynomial of degree~$n_h$. Note that, since the flow of the planar ODE system~
(\ref{odex_general})--(\ref{odey_general})
is tangent to the algebraic curve, we have
\begin{equation}
\frac{\partial h}{\partial x}(x,y)
\, f(x,y)
\,
+
\,
\frac{\partial h}{\partial y}
\,
g(x,y)
\, = \,
s(x,y) \, h(x,y) \,,
\label{alglimcyclcond}    
\end{equation}
where $s(x,y)$ is a polynomial, called {\it cofactor} of $h.$ First, we define versions of $S(n),$ $M(n)$ and $W(n)$ for counting only algebraic limit cycles.

\vskip 2mm

\begin{definition}
We denote by $S^a(n)$ the maximum number of algebraic limit cycles for {\rm ODEs} in the set~$\,{\mathbb S}_n$, by $M^a(n)$ the maximum number of algebraic limit cycles for {\rm ODEs} in the set $\,{\mathbb M}_n$ of $n$-molecular chemical reaction networks, and by $W^a(n)$ the maximum number of algebraic limit cycles for {\rm ODEs} in the set ${\mathbb W}_{\!n}$ of weakly reversible chemical reaction networks.
\label{defoamawa}
\end{definition}

\vskip 2mm

\noindent
A counterpart of Lemma~\ref{WMNineq} establishing inequalities between numbers $W{\hskip -0.3mm}(n),$ $M(n)$, $S(n)$ and $H(n)$ can also be formulated for numbers $W^a(n),$ $M^a(n)$, $S^a(n)$ and $H^a(n)$  counting only algebraic limit cycles.

\vskip 2mm

\begin{lemma}
We have \hskip 2mm
{\rm (a)}
$W^a(n) \le M^a(n) \le S^a(n) \le H^a(n) \le S^a(n+2)$
for all $n \in {\mathbb N}\,$, \hfill\break
\rule{0pt}{1pt} \hskip 34.5mm
{\rm (b)}
$S^{(a)}(n) \le M^{(a)}(n+1)$ for all $n \in {\mathbb N}\,$.
\label{WaMaNaineq}
\end{lemma}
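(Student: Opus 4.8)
The plan is to reprise the proof of Lemma~\ref{WMNineq}, checking at each step that the operations involved preserve the \emph{algebraic} character of a limit cycle, not merely its existence. For the first three inequalities in part (a), namely $W^a(n) \le M^a(n) \le S^a(n) \le H^a(n)$, and for part (b), namely $S^a(n) \le M^a(n+1)$, nothing new is required: each asserts that the maximal count over one class of ODE systems does not exceed the count over a larger class. By Lemma~\ref{setproperties} we have the inclusions $\mathbb{W}_{\!n} \subset \mathbb{M}_n \subset \mathbb{S}_n$ and $\mathbb{S}_n \subset \mathbb{M}_{n+1}$, and $\mathbb{S}_n$ is contained in the class of all planar polynomial systems of degree at most $n$. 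Since passing to a subclass merely restricts the set of admissible vector fields while leaving each individual system—and hence each of its algebraic limit cycles—untouched, the maximum number of algebraic limit cycles can only decrease, which yields all of these inequalities at once.

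The one inequality requiring a genuine argument is $H^a(n) \le S^a(n+2)$, which mirrors the construction used for $H(n) \le S(n+2)$ in Lemma~\ref{WMNineq}(a). I would start from a degree-$n$ system~(\ref{odex_general})--(\ref{odey_general}) carrying some number $N \le H^a(n)$ of algebraic limit cycles. These are finitely many bounded ovals, so a single translation $(x,y) \mapsto (x+c,\,y+d)$ can be chosen to move all of them into the open positive quadrant. A translation is an invertible affine map, so it carries the invariant algebraic curve $h(x,y)=0$ to $h(x-c,\,y-d)=0$, a curve of the same degree $n_h$, and each oval remains a closed component of an algebraic curve. I would then multiply both right-hand sides by $xy$, passing to the system $\dot x = xy\,f$, $\dot y = xy\,g$, whose degree is at most $n+2$.

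It remains to verify that this last step preserves both algebraicity and limit-cycle structure, and that the resulting system lies in $\mathbb{S}_{n+2}$. For algebraicity, if $h$ satisfies the cofactor identity~(\ref{alglimcyclcond}) with cofactor $s$ for the pair $(f,g)$, then
\begin{equation*}
\frac{\partial h}{\partial x}\,(xy\,f) + \frac{\partial h}{\partial y}\,(xy\,g) = xy\left(\frac{\partial h}{\partial x}\,f + \frac{\partial h}{\partial y}\,g\right) = (xy\,s)\,h,
\end{equation*}
so $h=0$ remains invariant, now with the polynomial cofactor $xy\,s$; hence every oval stays an algebraic limit cycle. On the open positive quadrant $xy>0$, so the new field is a strictly positive time-rescaling of the old one: the trajectories, and therefore the isolated periodic orbits, coincide as point sets, and no limit cycle is created or destroyed. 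Finally, every monomial of $xy\,f$ and $xy\,g$ is divisible by $xy$, so the coefficients $a_{0,k}$ and $b_{k,0}$ all vanish and condition~(\ref{condO}) of Lemma~\ref{lemma1} holds, placing the transformed system in $\mathbb{S}_{n+2}$. Since $N \le H^a(n)$ was arbitrary, this gives $S^a(n+2) \ge H^a(n)$.

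The main obstacle—indeed the only nontrivial point—is this last verification that multiplication by $xy$ does not damage the algebraic limit cycles. The cofactor computation handles the algebraic invariance, while the positivity of $xy$ on the open quadrant handles the dynamical equivalence; the crucial preparatory step is that the translation places all ovals strictly inside the quadrant, so that $xy$ never vanishes on them and the time-reparametrization is valid there. The translation step itself, and the counting inequalities arising from the set inclusions, are routine.
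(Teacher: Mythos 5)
Your proof is correct and follows essentially the same route as the paper, which simply invokes the argument of Lemma~\ref{WMNineq} with ``limit cycles'' replaced by ``algebraic limit cycles'': set inclusions for the easy inequalities, and translation plus multiplication by $xy$ for $H^a(n) \le S^a(n+2)$. Your explicit cofactor computation showing that $h=0$ stays invariant with cofactor $xy\,s$ is precisely the detail the paper leaves implicit, and it is a worthwhile addition rather than a deviation.
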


\begin{proof}
The proof follows some of the same arguments as in the proof of Lemma~\ref{WMNineq}, where we replace limit cycles by algebraic limit cycles.
\end{proof}

\vskip 2mm

\noindent
Some lower bounds on numbers $W^a(n)$, $M^a(n)$ and $S^a(n)$ are given in Table~\ref{table2} and in Lemmas~$\ref{lemmaSMWalge234}$, $\ref{lemmaWa1}$ and~$\ref{lemmaasyma}$.

\begin{table}
\begin{tabular}{|c|c|c|c|c|}
\hline
\raise -1.4mm \hbox{\rule{0pt}{4.6mm}} $\qquad\;\;\,$ {\color{red} $n$} $\qquad\quad$ & $\qquad\quad$ {\color{red} $W^a{\hskip -0.2mm}(n)$} $\qquad\quad$ & $\qquad\quad$ {\color{red} $M^a(n)$} $\qquad\quad$ & $\qquad\quad${\color{red} $S^a(n)$} $\qquad\quad$ & $\qquad\quad$ {\color{red} $H^a(n)$} $\qquad\quad$
\\
\hline
\hline
\raise -1.4mm \hbox{\rule{0pt}{4.6mm}}{\color{red} 2} & = 0 & = 0 & $\ge$ 1 & $\ge$ 1  \\
\hline
\raise -1.4mm \hbox{\rule{0pt}{4.6mm}}{\color{red} 3} & $\ge$ 0 & $\ge$ 1 & $\ge$ 1 & $\ge$ 2 \\
\hline
\raise -1.4mm \hbox{\rule{0pt}{4.6mm}}{\color{red} 4} & $\ge$ 1 & $\ge$ 1 & $\ge$ 3 & $\ge$ 4 \\
\hline
\hline
\raise -1.4mm \hbox{\rule{0pt}{4.6mm}}{\color{red} as $n \to \infty$} & $\ge {\mathcal O}(n)$ & $\ge {\mathcal O}(n^2$) & $\ge {\mathcal O}(n^2$) & $\ge {\mathcal O}(n^2$)  \\
\hline
\end{tabular}
\vskip 2mm
\caption{{\it Some values and estimates from below on numbers $W^a(n)$, $M^a(n)$, $S^a(n)$ and $H^a(n)$, see Lemma~$\ref{lemmaSMWalge234}$, Lemma~$\ref{lemmaWa1}$, Lemma~$\ref{lemmaasyma}$ and Theorem~$\ref{theoremWam}$}.}
\label{table2}
\end{table}

\vskip 2mm

\begin{lemma}
We have $H^a(2) \ge 1$, $H^{(a)}(3) \ge 2$, $H^a(4) \ge 4,$ $S^a(2) \ge 1$, $S^{(a)}(3) \ge 1$, $S^a(4) \ge 3,$ $M^{(a)}(2) = 0,$ $M^a(3) \ge 1,$ $M^{(a)}(4) \ge 1$ and $W^{(a)}(2) = 0.$
\label{lemmaSMWalge234}
\end{lemma}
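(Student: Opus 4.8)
```latex
\noindent
\textbf{Proof proposal for Lemma~\ref{lemmaSMWalge234}.}
The plan is to establish each of these lower bounds by exhibiting explicit algebraic limit cycles in systems of the appropriate type, and then to transport the bounds along the inclusions and equalities already proved in Lemmas~\ref{setproperties}, \ref{WMNineq}, \ref{WaMaNaineq} and~\ref{lemmaM2W2}. The cleanest strategy is to obtain a small number of ``seed'' constructions at low degree and let the structural lemmas do the rest. Concretely, I would first note the two equalities $M^a(2)=0$ and $W^a(2)=0$: since every algebraic limit cycle is in particular a limit cycle, we have $M^a(2)\le M(2)=0$ and $W^a(2)\le W(2)=0$ by Lemma~\ref{lemmaM2W2}, and both quantities are nonnegative, so they vanish. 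This disposes of the two exact values without any new construction.

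For the genuinely constructive bounds, the key is the cofactor condition~(\ref{alglimcyclcond}). I would exhibit a quadratic vector field possessing an invariant circle (or ellipse) $h(x,y)=(x-x_0)^2+(y-y_0)^2-r^2=0$ sitting in the open positive quadrant that is a hyperbolic stable limit cycle; the classical van der Pol-type or Bautin-type quadratic example with an algebraic cycle works, and one checks~(\ref{alglimcyclcond}) holds with a linear cofactor $s(x,y)$. Shifting the center $(x_0,y_0)$ so the whole oval lies in $[0,\infty)\times[0,\infty)$ and applying Lemma~\ref{lemma1} realizes this system as a chemical network in ${\mathbb S}_2$, giving $S^a(2)\ge 1$, and hence $H^a(2)\ge S^a(2)\ge 1$ by Lemma~\ref{WaMaNaineq}(a). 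The bounds $S^a(3)\ge 1$ and $H^a(3)\ge 2$ I would get by taking this degree-two construction together with an independent construction carrying a second algebraic oval; for the chemical (one-oval) bound $S^a(3)\ge 1$ the degree-two example already suffices via the inclusion ${\mathbb S}_2\subset{\mathbb S}_3$, while $H^a(3)\ge 2$ requires a genuine cubic field with two nested algebraic limit cycles, which I would take from the literature on cubic systems with invariant algebraic ovals (e.g. two concentric invariant circles realized as level sets of a single quartic factor, or two distinct invariant conics).

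For the degree-four entries I would use the construction foreshadowed in Section~\ref{sec61}: a quartic curve $h(x,y)=0$ with several ovals in the positive quadrant serving simultaneously as stable algebraic limit cycles. To get $S^a(4)\ge 3$ I would place three of these ovals in the positive quadrant and realize the resulting vector field as a member of ${\mathbb S}_4$ via Lemma~\ref{lemma1}; the same field, or one with a fourth oval outside the quadrant, gives $H^a(4)\ge 4$ through Lemma~\ref{WaMaNaineq}(a). The molecular bounds $M^a(3)\ge 1$ and $M^a(4)\ge 1$ follow by pushing the one-oval constructions through the degree-shift inclusion: by Lemma~\ref{WaMaNaineq}(b), $M^a(3)\ge S^a(2)\ge 1$ and $M^a(4)\ge S^a(3)\ge 1$. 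The main obstacle throughout is the first genuinely nontrivial construction --- producing an explicit quadratic (and then quartic) vector field whose invariant algebraic curve really is an \emph{isolated} periodic orbit, i.e. verifying both that~(\ref{alglimcyclcond}) holds with a polynomial cofactor \emph{and} that the corresponding oval is a hyperbolic (hence isolated) limit cycle rather than a member of a continuum of periodic orbits; once one such seed is in hand, the rest of the table is bookkeeping with the inclusion lemmas, plus the placement argument ensuring every relevant oval lies in the positive quadrant so that Lemma~\ref{lemma1} applies.
```
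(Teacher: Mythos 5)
The bookkeeping half of your proposal is exactly the paper's: $M^{(a)}(2)=W^{(a)}(2)=0$ follows from Lemma~\ref{lemmaM2W2} since algebraic limit cycles are limit cycles, and the chain $1 \le S^{(a)}(2) \le M^{(a)}(3) \le S^{(a)}(3) \le M^{(a)}(4)$ from Lemma~\ref{WaMaNaineq} is precisely the paper's inequality~(\ref{ineqproof}). However, both of your constructive ``seeds'' have genuine gaps. For $S^a(2) \ge 1$ you claim that a quadratic system with an algebraic limit cycle can be made chemical by translating the oval into the positive quadrant and ``applying Lemma~\ref{lemma1}.'' This is the crux, and it fails: Lemma~\ref{lemma1} is a sign condition~(\ref{condO}) on the coefficients $a_{0,i}$ and $b_{i,0}$, not a condition on where the trajectories lie, and a translation does not produce it. The paper itself exhibits the obstruction: the quadratic system~(\ref{chavar1})--(\ref{chavar2}) has an algebraic limit cycle, but the term $-14\,c\,x$ violates~(\ref{condO}) and no shift repairs this while keeping the degree equal to $2$ --- the paper's fix is to shift and multiply by $y$, which yields a \emph{cubic} chemical system. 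What the paper actually does for $S^a(2)\ge 1$ is quote Escher's specific quadratic system~(\ref{quadraticescher1})--(\ref{quadraticescher2}), whose coefficients already satisfy~(\ref{condO}) and which has the ellipse~(\ref{ellipselimitcycle}) as an algebraic limit cycle with cofactor $s(x,y)=x-2$. Producing a system that is simultaneously quadratic, chemical, and carries an algebraic limit cycle is the hard content of this entry; it cannot be reduced to a ``placement argument,'' and your stated main obstacle (hyperbolicity of the oval) is not the binding one.

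For $S^a(4)\ge 3$ the gap is of a different kind: the construction ``foreshadowed in Section~\ref{sec61}'' is Theorem~\ref{theorem1}, which applied to a quartic curve produces a system of degree $2n_h+1=9$, not $4$, so it can only ever bound $S^a(9)$ and is useless for the quartic entry. The degree-preserving tool the paper uses is Christopher's theorem \cite{christopher2001polynomial}: given the three-oval quartic~(\ref{threeovalcurve}) and the line $y=7x$ disjoint from its ovals, the system~(\ref{christopher1})--(\ref{christopher2}) has degree exactly $4$ and has all three ovals as hyperbolic algebraic limit cycles, and one then checks~(\ref{condO}) to place it in ${\mathbb S}_4$. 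Your route to $H^a(4)\ge 4$ suffers from the same missing mechanism (and, as written, Lemma~\ref{WaMaNaineq}(a) applied to a three-cycle chemical system only gives $H^a(4)\ge 3$); the paper sidesteps all three Hilbert-type bounds $H^a(2)\ge 1$, $H^{(a)}(3)\ge 2$, $H^a(4)\ge 4$ by citing \cite{Llibre:2010:OHP}. Without Christopher's theorem or an equivalent degree-preserving construction, the quartic entries of the lemma are not reachable by your plan.
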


\begin{proof}
See~\cite{Llibre:2010:OHP} for $H^a(2) \ge 1$, $H^{(a)}(3) \ge 2$ and $H^a(4) \ge 4.$ Since Lemma~\ref{lemmaM2W2} gives $M(2) = 0$ and $W(2)=0$, we also have $M^{(a)}(2) = 0$ and $W^{(a)}(2) = 0$ when we restrict to algebraic limit cycles in sets~${\mathbb M}_2$ and ${\mathbb W}_2$. To show $S^a(2) \ge 1$, we need to find a quadratic chemical system with an algebraic limit cycle. Consider the quadratic system~\cite{escher1979models}
\begin{eqnarray}
\frac{\mbox{d}x}{\mbox{d}t}
&=&
2x^2 - xy + \frac{3}{2}, 
\label{quadraticescher1}
\\
\frac{\mbox{d}y}{\mbox{d}t}
&=& 
\frac{5}{2}x^2 - xy - y + \frac{17}{4}.
\label{quadraticescher2}
\end{eqnarray}
Using Lemma~\ref{lemma1}, the ODE system~(\ref{quadraticescher1})--(\ref{quadraticescher2}) belongs to ${\mathbb S}_2.$ Moreover, it is easy to verify that it has an algebraic limit cycle in the positive quadrant, which is the ellipse given by~\cite{escher1979models}
\begin{equation}
h(x,y) = 10x^2 - 12xy + 4y^2 + 20x - 16y + 19 = 0
\label{ellipselimitcycle}
\end{equation}
with the cofactor $s(x,y)$ in equation~(\ref{alglimcyclcond}) given as $s(x,y)=x-2.$ Consequently, we have $S^a(2) \ge 1$. Using Lemma~\ref{WaMaNaineq}, we conclude 
\begin{equation}
1 \le S^{(a)}(2) \le M^{(a)}(3) \le S^{(a)}(3) \le M^{(a)}(4).
\label{ineqproof}
\end{equation}
Finally, to show $S^a(4) \ge 3,$ consider quartic algebraic curve $h(x,y)=0$ given by
\begin{equation}
h(x,y) 
\,=\, 
x^{2}y^{2}
\, - \,
\frac{9}{10^3} 
\, \big(
x^{3}y +
xy^{3} 
\big)
\, + \,
\frac{6}{10^4} \, 
\big( x^{3}+y^{3} \big)
\,
+
\,
\frac{2}{50}
\, \big(
x^{2} y + x y^{2} \big)
\, - \, 2 xy 
\, + \, \frac{934}{10^3} \, ,
\label{threeovalcurve}
\end{equation}
which has three ovals in the positive quadrant. We visualize them in Figure~\ref{figure2}(a). Next, we consider the line $\, y  = 7 x \,$, which does not intersect the three ovals of $h(x,y)=0$, as it is shown in Figure~\ref{figure2}(a). Then, we can apply \cite[Theorem 1]{christopher2001polynomial} to deduce that the ODE system 
\begin{eqnarray}
\frac{\mbox{{\rm d}}x}{\mbox{{\rm d}}t}
\, & = &
\,
h(x,y) \, + \, (y - 7 x) \, \frac{\partial h}{\partial y}(x,y) \,, 
\label{christopher1}
\\
\frac{\mbox{{\rm d}}y}{\mbox{{\rm d}}t}
\, & = &
\,
h(x,y) \, + \,  (7 x - y ) \,  \frac{\partial h}{\partial x}(x,y) \,,
\label{christopher2}
\end{eqnarray}
is a polynomial system of degree 4 which has the three ovals of $h(x,y)=0$ in the positive quadrant as hyperbolic algebraic limit cycles. Using Lemma~\ref{lemma1}, we observe that the ODE system~(\ref{christopher1})--(\ref{christopher2}) is in set~${\mathbb S}_4$. Therefore, we conclude that $S^a(4) \ge 3.$ 
\end{proof}

\vskip 2mm
\begin{figure}
\centerline{\hskip 0.2mm \includegraphics[height=5.6cm]{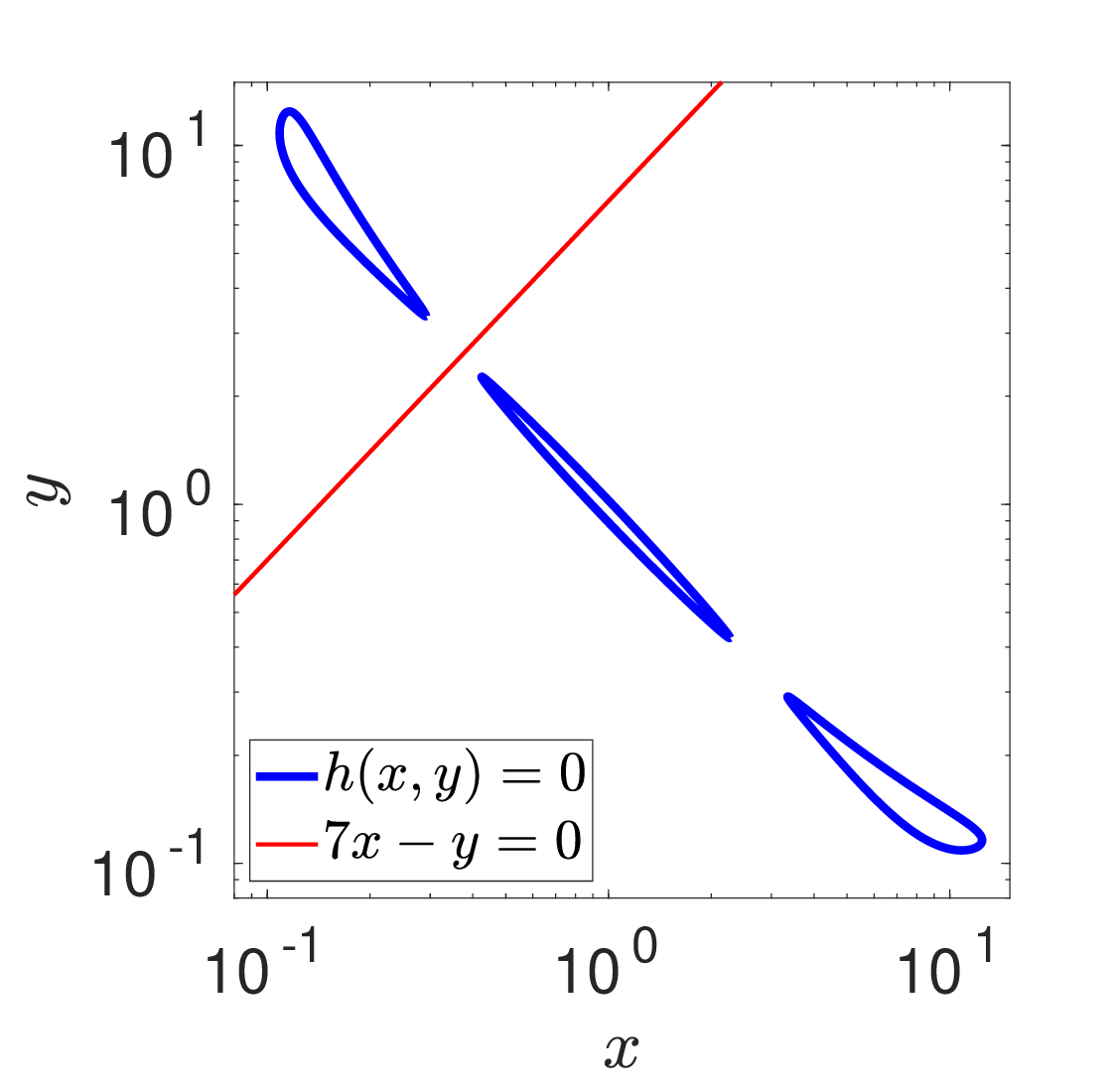}\hskip 0.2mm \includegraphics[height=4.97cm]{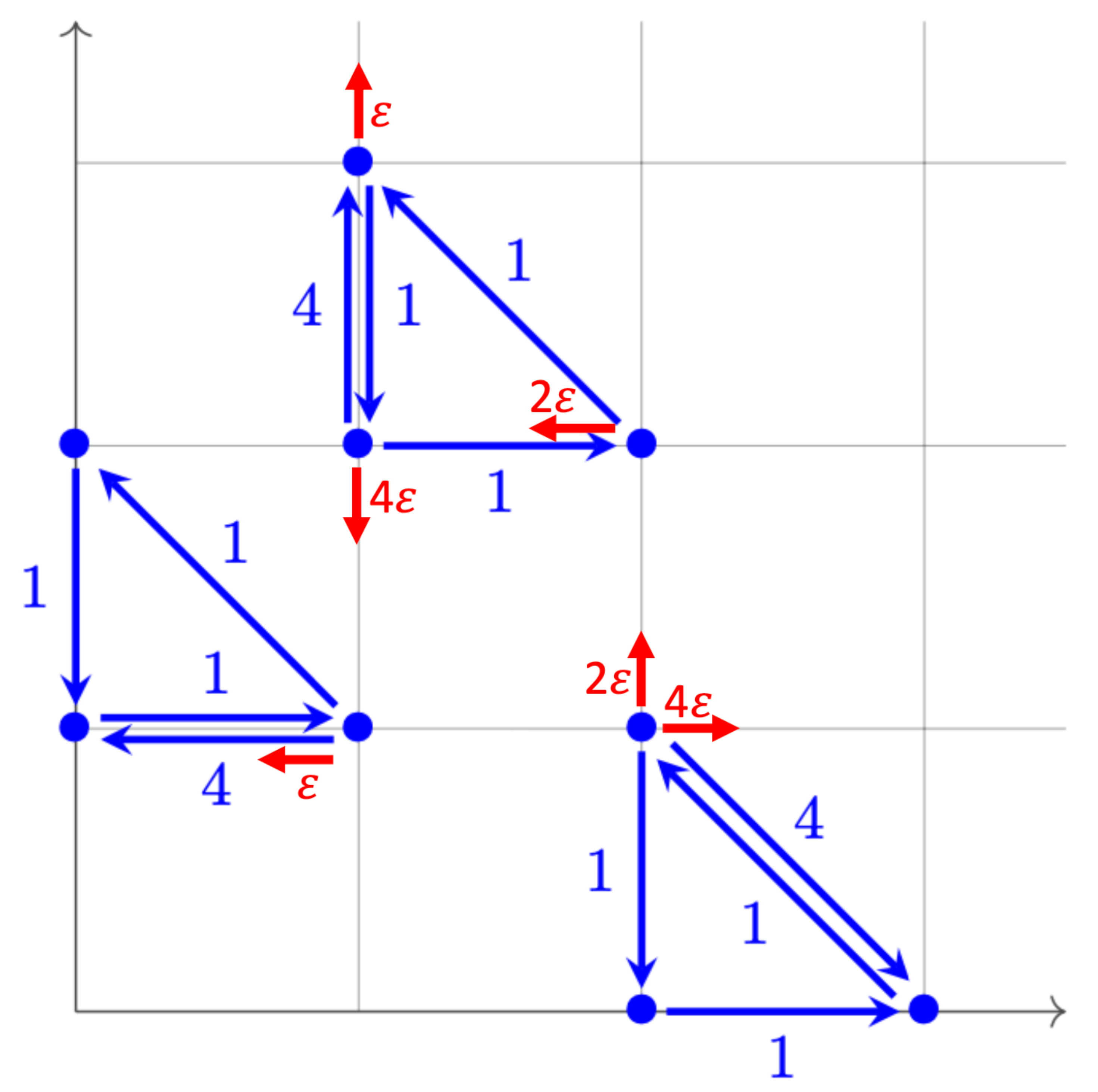} \hskip 0.2mm \includegraphics[height=4.97cm]{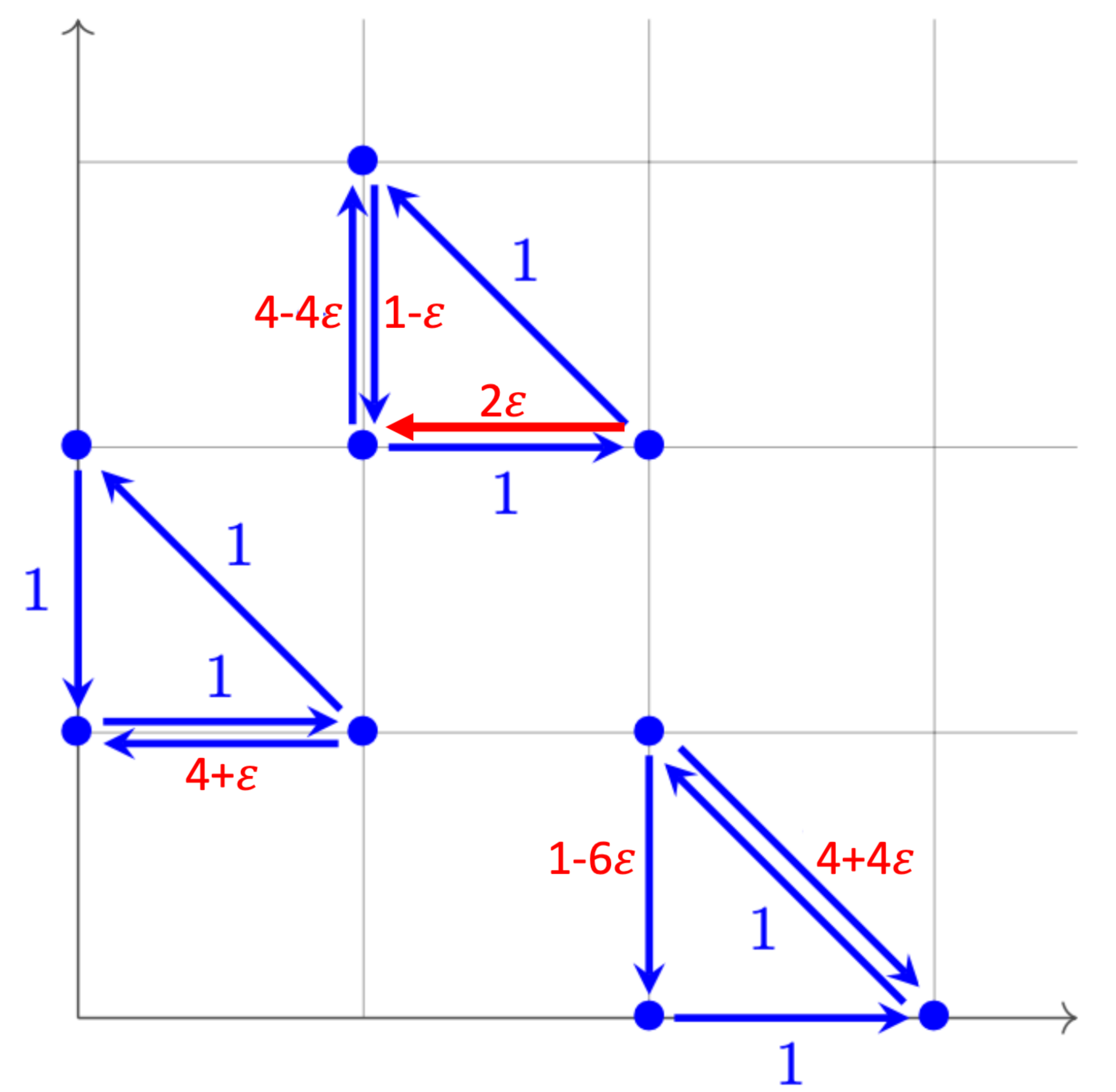}}
\vskip -5.5cm
\leftline{(a) \hskip 5.18cm (b) \hskip 4.56cm (c)}
\vskip 5.2cm
\caption{(a) {\it The quartic algebraic curve $h(x,y)=0$ given by~$(\ref{threeovalcurve})$ has three (blue) ovals in the positive quadrant. The red line is given by $7x-y=0$. The red line does not intersect the (blue) ovals of $h(x,y)=0$ in the positive quadrant. We use log scale on the $x$-axis and $y$-axis.} \hfill\break
(b) {\it  The chemical reaction network corresponding to the blue edges of the planar {\rm E-graph} realizes the `unperturbed' {\rm ODE} system~$(\ref{WR_network_small_x})$--$(\ref{WR_network_small_y})$, while the $\varepsilon$-perturbations are shown by the red edges.} \hfill\break
(c) {\it The weakly reversible network that consists of the blue edges (with some modified rate constants) together with the red edge provides a weakly reversible realization of the perturbed system~$(\ref{WR_network_small_perturbed_x})$--$(\ref{WR_network_small_perturbed_y})$.}}
\label{figure2}
\end{figure}

\noindent
In Lemma~\ref{lemmaSMWalge234}, we have presented a second-order chemical system in ${\mathbb S}_2$ with the algebraic limit cycle given as ellipse~(\ref{ellipselimitcycle}). The corresponding ODE system~(\ref{quadraticescher1})--(\ref{quadraticescher2}) has quadratic polynomials on the right-hand side. Another quadratic polynomial dynamical system with an algebraic limit cycle is~\cite{Chavarriga:2004:ALC}
\begin{eqnarray}
\frac{\mbox{d}x}{\mbox{d}t}
\,
&=&
\,
2 \, \big(1 \, + \,2\,x \, - \, 2 \, c \, x^2 \, + \, 6 \,x\,y \big)\,, 
\label{chavar1}
\\
\frac{\mbox{d}y}{\mbox{d}t}
\,
&=& 
\, 8 \, - \, 3 \, c \, - \, 14\,c\,x 
\, - \, 2 \, c \, x\,y \, - \, 8 \, y^2 \, .
\label{chavar2}
\end{eqnarray}
As explained in~\cite{Chavarriga:2004:ALC}, for any $c \in (0,1/4)$ the ODE system~(\ref{chavar1})--(\ref{chavar2}) has an invariant algebraic curve, which is a limit cycle~\cite{Gasull:2023:NLC}. The ODE system~(\ref{chavar1})--(\ref{chavar2}) is not a chemical system because of the term $-14 {\hskip 0.2mm} c {\hskip 0.2mm} x$ in equation~(\ref{chavar2}). However, we can use this quadratic system for $c = 1/8$ to construct a cubic mass-action system that has an algebraic limit cycle. For this we note that the limit cycle of the quadratic system~(\ref{chavar1})--(\ref{chavar2}) is contained in the set $(0, \infty) \times (-1, \infty)$, and therefore if we shift this system one unit in the $y$-direction, and then  multiply its right-hand side by a factor of $y$, we obtain
\begin{eqnarray*}
\frac{\mbox{d}x}{\mbox{d}t}
&=&
2 \, \big(1 \, + \,2\,x \, - \, 2 \, c \, x^2 \, + \, 6 \,x \, (y-1) \big) \, y, \\
\frac{\mbox{d}y}{\mbox{d}t}
&=& 
\, \big(
8 \, - \, 3 \, c \, - \, 14\,c\,x 
\, - \, 2 \, c \, x \, (y-1) \, - \, 8 \, (y-1)^2 \big) \, y \, .
\end{eqnarray*}
Then, the cubic system above is a mass-action system; moreover, it has an algebraic limit cycle, because its trajectory curves are the shifted versions of the trajectory curves of the system~(\ref{chavar1})--(\ref{chavar2}).
Such a construction provides an alternative way for us to show that $S^{(a)}(3) \ge 1$, which we have previously established in the proof of Lemma~\ref{lemmaSMWalge234} by using inequalities~(\ref{ineqproof}).

In Section~\ref{sec3}, it has been relatively straightforward to establish that weakly reversible chemical systems can give rise to limit cycles by using small perturbations of non-reversible chemical systems. This is not the case, when we consider  algebraic limit cycles, because a small perturbation can change an algebraic limit cycle to a non-algebraic one.
In our next lemma, we show that weakly reversible chemical systems can have algebraic limit cycles by constructing a quartic weakly reversible two-species system. The question of whether cubic weakly reversible two-species systems can give rise to algebraic limit cycles remains open, leaving us with inequality $W^{(a)}(3) \ge 0$ in Table~\ref{table2}. In our construction we rely on a general approach for constructing weakly reversible systems that have a curve of equilibria; this approach has been introduced in~\cite{Boros:2020:WRM}. 

\vskip 2mm

\begin{lemma}
We have $W^a(4) \ge 1.$
\label{lemmaWa1}
\end{lemma}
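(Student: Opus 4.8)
The plan is to exhibit a single quartic, weakly reversible, two-species mass-action system whose reaction rate equations possess an algebraic limit cycle; such a system lies in $\mathbb{W}_4$ and immediately yields $W^a(4) \ge 1$. I would take an algebraic curve $h(x,y)=0$ of degree at most four having an oval in the open positive quadrant, and aim to realize a weakly reversible network for which this oval is simultaneously an invariant algebraic curve (so that the limit cycle is genuinely algebraic) and a hyperbolic limit cycle. Following the construction of weakly reversible systems with a curve of equilibria in \cite{Boros:2020:WRM}, the governing idea is to first build an \emph{unperturbed} system whose positive steady states fill the entire oval $h=0$, and then to break this degeneracy with a carefully chosen $\varepsilon$-perturbation that converts the curve of equilibria into an isolated periodic orbit without leaving the curve $h=0$. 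The elegant point, visible in Figure~\ref{figure2}(b),(c), is that the perturbing edge does double duty: it both supplies the missing oriented-cycle edges needed for weak reversibility and produces the tangential flow that turns the curve of equilibria into a limit cycle.

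Concretely, I would proceed in four steps. First, fix $h$ together with a direction field so that $h=0$ satisfies the cofactor identity~(\ref{alglimcyclcond}), $h_x f + h_y g = s\,h$; one may start from a known low-degree system with an invariant oval, such as the ellipse of Lemma~\ref{lemmaSMWalge234} or the Chavarriga curve~(\ref{chavar1})--(\ref{chavar2}), and arrange that the restricted flow on $h=0$ is nonvanishing and tangent, so the oval is a periodic orbit. Second, I would write the unperturbed field in factored form $f_0 = h\,P$, $g_0 = h\,Q$, whose equilibrium set contains the oval, and realize it by a planar E-graph (the blue edges) using the monomial-by-monomial realization of Lemma~\ref{lemma1}, choosing $P$ and $Q$ so that the total degree stays at most four. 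Third, I would add an $\varepsilon$-perturbation, realized by one or a few additional reactions (the red edge), chosen so that (i) every edge of the resulting E-graph belongs to an oriented cycle, making the network weakly reversible; (ii) the net mass-action contribution $(f_1,g_1)$ of the perturbation is tangent to the oval, i.e. $h_x f_1 + h_y g_1 = s_1\,h$, so that $h=0$ remains invariant and the limit cycle stays algebraic; and (iii) the perturbed flow on $h=0$ is nonzero, so the oval becomes isolated. Finally, I would verify the mass-action nonnegativity and molecularity constraints~(\ref{condO})--(\ref{condM}) together with $\deg \le 4$, and confirm hyperbolicity and the direction of attraction through the averaged cofactor $\oint s\,\mathrm{d}t$ taken around the oval.

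The hard part will be reconciling weak reversibility with the preservation of algebraicity. As already flagged before the statement, an arbitrary small perturbation that merely closes the E-graph cycles will generically destroy the invariance of $h=0$, deforming the algebraic limit cycle into a nearby non-algebraic one. The crux is therefore to select the perturbing reaction(s) so that their combined contribution is exactly tangent to $h=0$, satisfying $h_x f_1 + h_y g_1 = s_1\,h$, while simultaneously providing precisely the missing oriented-cycle edges for weak reversibility, and keeping the total degree at most four. Since a single reaction contributes only a monomial multiple of a fixed reaction vector, meeting the tangency condition typically forces a coordinated choice of the perturbing reactions together with adjustments of the blue rate constants; checking that such a choice exists, keeps all rate constants positive, and leaves the averaged cofactor of one sign around the oval is the main technical content of the argument.
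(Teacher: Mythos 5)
Your strategy coincides with the paper's: start from the weakly reversible curve-of-equilibria construction of \cite[Example 4.3]{Boros:2020:WRM}, multiply a field with an attracting equilibrium by $h$, then add a tangential $\varepsilon$-perturbation that keeps $h=0$ invariant while converting the curve of equilibria into an isolated periodic orbit, and finally argue that the perturbed system still admits a weakly reversible realization. However, your proposal stops exactly where the content of the lemma begins: you never exhibit a concrete $h$, a concrete perturbation, or a concrete weakly reversible realization, and you explicitly defer ``checking that such a choice exists'' as ``the main technical content of the argument.'' For an existence statement such as $W^a(4)\ge 1$, that deferred check \emph{is} the proof, so as written the argument is incomplete.

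Two remarks on where the real difficulty sits. First, you locate the crux in reconciling tangency with weak reversibility, worrying that a single reaction only contributes a monomial multiple of a fixed reaction vector. But tangency is free: the paper takes the perturbation to be $\varepsilon\, x\, y\,(-h_y,\,h_x)$, the rotated gradient multiplied by the positive monomial $xy$, which satisfies $h_x f_1 + h_y g_1 = 0$ identically, with no coordination among reactions required; the $xy$ factor also keeps the perturbation mass-action realizable and, with the degree-$3$ polynomial $h=x^2+xy^2+y-4xy$ and the fields $(1-x,\,x-y)$, keeps the total degree at $4$. The genuine obstacle is weak reversibility of the \emph{perturbed} system, and the paper resolves it by a structural observation absent from your plan: every monomial of $\varepsilon\, x\, y\,(-h_y,h_x)$ already appears among the monomials of the unperturbed system $h\cdot(1-x,\,x-y)$, so for $\varepsilon\in(0,1/6)$ the perturbed system can be realized by essentially the same E-graph with adjusted rate constants plus one extra edge, and that realization is weakly reversible (Figure~\ref{figure2}(c)). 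Without this absorption argument, or a substitute for it, your step (iii) is unjustified --- and note that your suggested starting points (the Escher ellipse or the Chavarriga curve) come with no such ready-made weakly reversible structure, which is precisely why the paper builds on the Boros--Craciun--Yu example. Second, your isolation criterion needs care: a non-vanishing flow on $h=0$ makes the oval a periodic orbit but not yet a limit cycle; what isolates it is the transversal condition $f_0\, h_x + g_0\, h_y \neq 0$ along the oval, {\it i.e.} condition~(\ref{transcondition}) of Theorem~\ref{theorgen}, which your averaged-cofactor check would ultimately capture but which should be stated as the operative hypothesis rather than an afterthought.
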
 

\begin{proof} We will construct a weakly reversible system that has an algebraic limit cycle, as follows. 
We start with a weakly reversible system constructed in~\cite[Example 4.3]{Boros:2020:WRM}, given by
\begin{eqnarray}
\label{WR_network_small_x}
\frac{\mbox{d}x}{\mbox{d}t} 
&=& 
\big(x^2+xy^2+y-4xy\big) \, (1-x), \\
\label{WR_network_small_y}
\frac{\mbox{d}y}{\mbox{d}t}
&=& 
\big(x^2+xy^2+y-4xy\big)\,(x-y).
\end{eqnarray}
The common factor 
\begin{equation}
h(x,y) = x^2+xy^2+y-4xy  
\label{commfach}
\end{equation}
results in a curve of positive steady states within the positive quadrant. Note that the polynomials on the right-hand side of the ODE system~(\ref{WR_network_small_x})--(\ref{WR_network_small_y}) have degree $4$. They can be obtained by multiplying the linear system~(\ref{odelinear1})--(\ref{odelinear2}) by~(\ref{commfach}). In Figure~\ref{figure1}(c), we illustrated that the multiplication of the linear system~(\ref{odelinear1})--(\ref{odelinear2}) by positive monomials results in copies of the network. Generalizing this observation to the ODE system~(\ref{WR_network_small_x})--(\ref{WR_network_small_y}), we can realize it by the chemical reaction network shown in blue in Figure~\ref{figure2}(b) or Figure~\ref{figure2}(c).
We now consider a perturbed version of the ODE system~(\ref{WR_network_small_x})--(\ref{WR_network_small_y}), also of degree 4, given by
\begin{eqnarray}
\frac{\mbox{d}x}{\mbox{d}t} 
&=& 
\big(x^2+xy^2+y-4xy\big)\,(1-x)
\, - \, 
\varepsilon \, x \, y
\, \frac{\partial h}{\partial y}(x,y) \,, 
\label{ex1x} \\
\frac{\mbox{d}y}{\mbox{d}t}
&=& 
\big(x^2+xy^2+y-4xy\big)\,(x-y)
\, + \, 
\varepsilon \, x \, y
\, \frac{\partial h}{\partial x}(x,y) \,,
\label{ex1y}
\end{eqnarray}
which implies
\begin{eqnarray}
\label{WR_network_small_perturbed_x}
\frac{\mbox{d}x}{\mbox{d}t} 
&=& 
\big(x^2+xy^2+y-4xy\big)\,(1-x)  
\, - \, \varepsilon \, x \, y \, 
\big(2xy +1 - 4x\big)\,,
\\ 
\label{WR_network_small_perturbed_y}
\frac{\mbox{d}y}{\mbox{d}t}
&=& 
\big(x^2+xy^2+y-4xy\big)\,(x-y)
\, + \, \varepsilon \, x \,y  \, 
\big(2x + y^2 -4y\big)\,.
\end{eqnarray}
The ODE system~(\ref{WR_network_small_perturbed_x})--(\ref{WR_network_small_perturbed_y}) has an algebraic limit cycle given by $h(x,y)=0$. One possible way to check that the periodic solution that lies along the curve $h(x,y)=0$ is indeed a limit cycle is to look at a more general setting which is discussed in depth in Section~\ref{sec5}; specifically, it is easy to check that the transversality  condition~(\ref{transcondition}) holds for the ODE system~(\ref{WR_network_small_perturbed_x})--(\ref{WR_network_small_perturbed_y}).

The red edges in Figure~\ref{figure2}(b) suggest a realization of the ODE system (\ref{WR_network_small_perturbed_x})--(\ref{WR_network_small_perturbed_y}) as a chemical reaction network in ${\mathbb S}_4$ for any  $\varepsilon > 0$, but this particular realization is not weakly reversible. However, reaction network realizations of polynomial dynamical systems are not unique in general~\cite{Craciun_Pantea_2008,Plesa:2018:NCM,Craciun:2020:ECC}. If~$\varepsilon \in (0, 1/6)$, then there do exist weakly reversible realizations of the ODE system~(\ref{WR_network_small_perturbed_x})--(\ref{WR_network_small_perturbed_y}); one such realization is shown in Figure~\ref{figure2}(c). Therefore, the ODE system~(\ref{WR_network_small_perturbed_x})--(\ref{WR_network_small_perturbed_y}) is in ${\mathbb W}_4$ for $\varepsilon \in (0, 1/6)$, giving $W^a(4) \ge 1.$
\end{proof}

\begin{lemma}
We have $H^a(n) \ge {\mathcal O}(n^2)$, $S^a(n) \ge {\mathcal O}(n^2)$ and
$M^a(n) \ge {\mathcal O}(n^2)$, as $n \to \infty.$
\label{lemmaasyma}
\end{lemma}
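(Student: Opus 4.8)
The plan is to deduce all three bounds from a single construction giving $H^a(n)\ge{\mathcal O}(n^2)$. Indeed, Lemma~\ref{WaMaNaineq}(a) yields $H^a(n-2)\le S^a(n)$ and Lemma~\ref{WaMaNaineq}(b) yields $S^a(n-1)\le M^a(n)$, so once $H^a(n)\ge c\,n^2$ is established for large $n$, we immediately get $S^a(n)\ge H^a(n-2)\ge{\mathcal O}(n^2)$ and $M^a(n)\ge S^a(n-1)\ge{\mathcal O}(n^2)$. (Recall that $H^a(n-2)\le S^a(n)$ is realised concretely by shifting all limit cycles into the positive quadrant and multiplying both right-hand sides by $xy$, which preserves algebraicity, keeps $h=0$ invariant, and leaves the number of limit cycles unchanged.) Hence it suffices to exhibit, for each large $n$, a planar polynomial system of degree at most $n$ carrying of order $n^2$ algebraic limit cycles.

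First I would build an algebraic curve of controlled degree carrying quadratically many ovals. Let $p$ be a degree-$m$ polynomial with $m$ simple real roots $r_1<\dots<r_m$ (for the reduction to $S^a$ and $M^a$ one may take $r_i=i>0$, placing everything in the positive quadrant), and set
$$
h(x,y)\,=\,p(x)^2+p(y)^2-\delta .
$$
This polynomial has degree $2m$. The function $F=p(x)^2+p(y)^2$ attains its minimum value $0$ exactly at the $m^2$ points $(r_i,r_j)$, and its smallest positive critical value equals $\min_c p(c)^2>0$, where $c$ ranges over the critical points of $p$ (which lie strictly between consecutive roots, so $p(c)\neq0$). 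Consequently, for any $\delta\in\big(0,\min_c p(c)^2\big)$ the real curve $h=0$ consists of exactly $m^2$ disjoint smooth ovals, one around each $(r_i,r_j)$; this is a routine Morse-theoretic count, and $\nabla h\neq0$ on every oval.

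Next I would turn each oval into a hyperbolic limit cycle using the gradient-Hamiltonian field
$$
\frac{{\rm d}x}{{\rm d}t}=-\frac{\partial h}{\partial y}-h\,\frac{\partial h}{\partial x},
\qquad
\frac{{\rm d}y}{{\rm d}t}=\frac{\partial h}{\partial x}-h\,\frac{\partial h}{\partial y},
$$
for which a direct computation gives $\dot h=-h\,|\nabla h|^2$, so the cofactor in~(\ref{alglimcyclcond}) is $s=-|\nabla h|^2$. On each oval $\nabla h\neq0$, hence the field is nonzero there (no equilibria on the oval) and $s<0$ strictly; therefore every oval is an isolated, hyperbolic, stable algebraic limit cycle. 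This vector field has degree $2(2m)-1=4m-1$, so taking $m=\lfloor(n+1)/4\rfloor$ produces a system of degree at most $n$ with at least $m^2={\mathcal O}(n^2)$ algebraic limit cycles, giving $H^a(n)\ge{\mathcal O}(n^2)$; combined with the reduction above this proves all three inequalities.

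The step I expect to be the main obstacle is ensuring that the quadratically many ovals are genuine isolated limit cycles rather than a degenerate band of periodic orbits. The natural temptation is to reuse the line-based construction of Lemma~\ref{lemmaSMWalge234} from \cite{christopher2001polynomial}, whose cofactor is $s=h_x+h_y$; but for the highly symmetric curve above this cofactor integrates to zero around each (nearly elliptic) oval to leading order, so hyperbolicity would be only a delicate higher-order effect. Using instead the gradient-Hamiltonian field, whose cofactor $-|\nabla h|^2$ has a strict sign on every smooth oval, removes this difficulty entirely and makes the hyperbolicity robust. The only remaining care is the elementary verification that the $m^2$ ovals are smooth and pairwise disjoint, which the choice $\delta\in\big(0,\min_c p(c)^2\big)$ secures.
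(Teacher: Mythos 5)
Your proof is correct, but it takes a genuinely different route from the paper's on the central point. The paper's own proof is essentially a citation: it invokes \cite{Llibre:2010:OHP} for $H^a(n) \ge {\mathcal O}(n^2)$ and then, exactly as you do, transfers the bound to $S^a$ and $M^a$ via Lemma~\ref{WaMaNaineq}; so your reduction step coincides with the paper's, but you replace the external reference by an explicit construction. Your construction is, in effect, the paper's own machinery from Section~\ref{sec5}: your vector field $(-h_y - h\,h_x,\; h_x - h\,h_y)$ is the field of Theorem~\ref{theorem1} with the factor $x\,y$ (needed there only for chemical realizability) stripped out, and your verification via the strict sign of the cofactor is the transversality condition of Theorem~\ref{theorgen}. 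The new ingredient you add, which the paper never combines with that machinery, is a curve with \emph{quadratically} many ovals: $h = p(x)^2 + p(y)^2 - \delta$ with $m^2$ ovals for $\delta$ below the least positive critical value, whereas the paper's product construction in Theorem~\ref{theoremWam} only produces ${\mathcal O}(n)$ ovals in degree ${\mathcal O}(n)$. What each approach buys: the paper's is short and defers the hard work to the literature; yours is self-contained and constructive, and with roots $r_i = i$ the ovals already lie in the positive quadrant, so multiplying both right-hand sides by $xy$ would realize the bound for $S^a$ directly as a mass-action system of degree $4m+1$. One small caveat: your claim that a strictly negative cofactor along an oval yields \emph{hyperbolicity} implicitly uses the identity $\oint_\gamma \mathrm{div}\,\mathrm{d}t = \oint_\gamma s\,\mathrm{d}t$ for periodic orbits on a nonsingular invariant curve, which you neither prove nor cite; this is a known fact, but it can be avoided entirely, since the annulus/Poincar\'e--Bendixson argument in the proof of Theorem~\ref{theorgen} applies verbatim with the factor $\varepsilon\, x\, y$ replaced by $1$ (it only uses that the perturbing field is tangent to the level curves of $h$ and nonvanishing on them), and isolation plus stability is all the counting argument needs.
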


\begin{proof}
See~\cite{Llibre:2010:OHP} for $H^a(n) \ge {\mathcal O}(n^2)$ as $n \to \infty.$
The next two asymptotic inequalities follow from Lemma~\ref{WaMaNaineq}.
\end{proof}

\noindent
Lemmas~$\ref{lemmaSMWalge234}$, $\ref{lemmaWa1}$ and~$\ref{lemmaasyma}$ have justified all lower bounds in Table~\ref{table2}, except of the asymptotic inequality $W^a(n) \ge {\mathcal O}(n)$ as $n \to \infty.$ We will show this inequality in the next subsection by considering reversible chemical systems, which is even more restrictive class of chemical reaction networks than weakly reversible systems. In particular, we will show that reversible chemical systems can have (multiple) algebraic limit cycles.

\subsection{Algebraic limit cycles for reversible chemical systems}
\label{reversible}

In this section we describe a general approach for constructing reversible systems with algebraic limit cycles. We start with the simple chemical reaction network shown in Figure~\ref{figure3}(a). If we choose all the reaction rate constants to be equal to 1, the corresponding reaction rate equations are given by the ODE system
\begin{eqnarray}
\frac{\mbox{d}x}{\mbox{d}t} 
&= 
1-x+y-xy,
\label{unit_square_system_x} \\
\frac{\mbox{d}y}{\mbox{d}t}
&= 
1+x-y-xy,
\label{unit_square_system_y}
\end{eqnarray}
which has a globally attracting point at $(x,y)=(1,1)$. 
Next, we consider the algebraic curve $h(x,y)=0$ of degree $n_h=4$ given by
\begin{equation}
h(x,y) = x^2y^2+x^2y+xy^2+x^2+y^2+x+y+1-9xy\, , 
\label{h_x_y_9}
\end{equation}
then, within the positive quadrant, the equation $h(x,y)=0$ is satisfied along a simple closed  curve. Indeed, we can rewrite $h(x,y)=0$ as
$$
\left(x + 1 + \frac{1}{x} \right)
\left(y + 1 + \frac{1}{y} \right)
=
10\,,
$$
which has two solutions $y$ for each $x$ satisfying $(7-\sqrt{13})/6 < x < (7+\sqrt{13})/6$. Plotting the values of $y$ as functions of $x$ in Figure~\ref{figure4}(a), we obtain the two branches of the closed curve visualized as the blue line. A geometric representation of the monomials of $h(x,y)$ in~(\ref{h_x_y_9}) is shown in Figure~\ref{figure3}(b).
Multiplying the right-hand side of the ODE system~(\ref{unit_square_system_x})--(\ref{unit_square_system_y}) by $h(x,y)$, we obtain the ODE system
\begin{eqnarray}
\frac{\mbox{d}x}{\mbox{d}t} 
&=&
\big(x^2y^2+x^2y+xy^2+x^2+y^2+x+y+1-9xy\big)
\, \big(1-x+y-xy\big)\,,
\label{min_reversible_system_x} \\
\frac{\mbox{d}y}{\mbox{d}t}
&=& 
\big(x^2y^2+x^2y+xy^2+x^2+y^2+x+y+1-9xy\big)
\, \big(1+x-y-xy\big)\,,
\label{min_reversible_system_y}
\end{eqnarray}
which has polynomials of degree 6 on the right hand side. The ODE system~(\ref{min_reversible_system_x})--(\ref{min_reversible_system_y}) has a curve of equilibria~\cite{Boros:2020:WRM} given by $h(x,y)=0$. Moreover, since the ODE system~(\ref{min_reversible_system_x})--(\ref{min_reversible_system_y}) has been obtained by multiplying the ODE system~(\ref{unit_square_system_x})--(\ref{unit_square_system_y}) by a polynomial, the corresponding planar E-graph representation will consists of shifted copies of the reaction network in Figure~\ref{figure3}(a) for each multiplication by a positive monomial, as we have already observed in Figure~\ref{figure1}(c).

\begin{figure}
\leftline{
\hskip 1mm
(a) 
\hskip 4.65cm 
(b)
\hskip 4.7cm 
(c)
}
\centerline{\hskip 2mm \includegraphics[height=4.5cm]{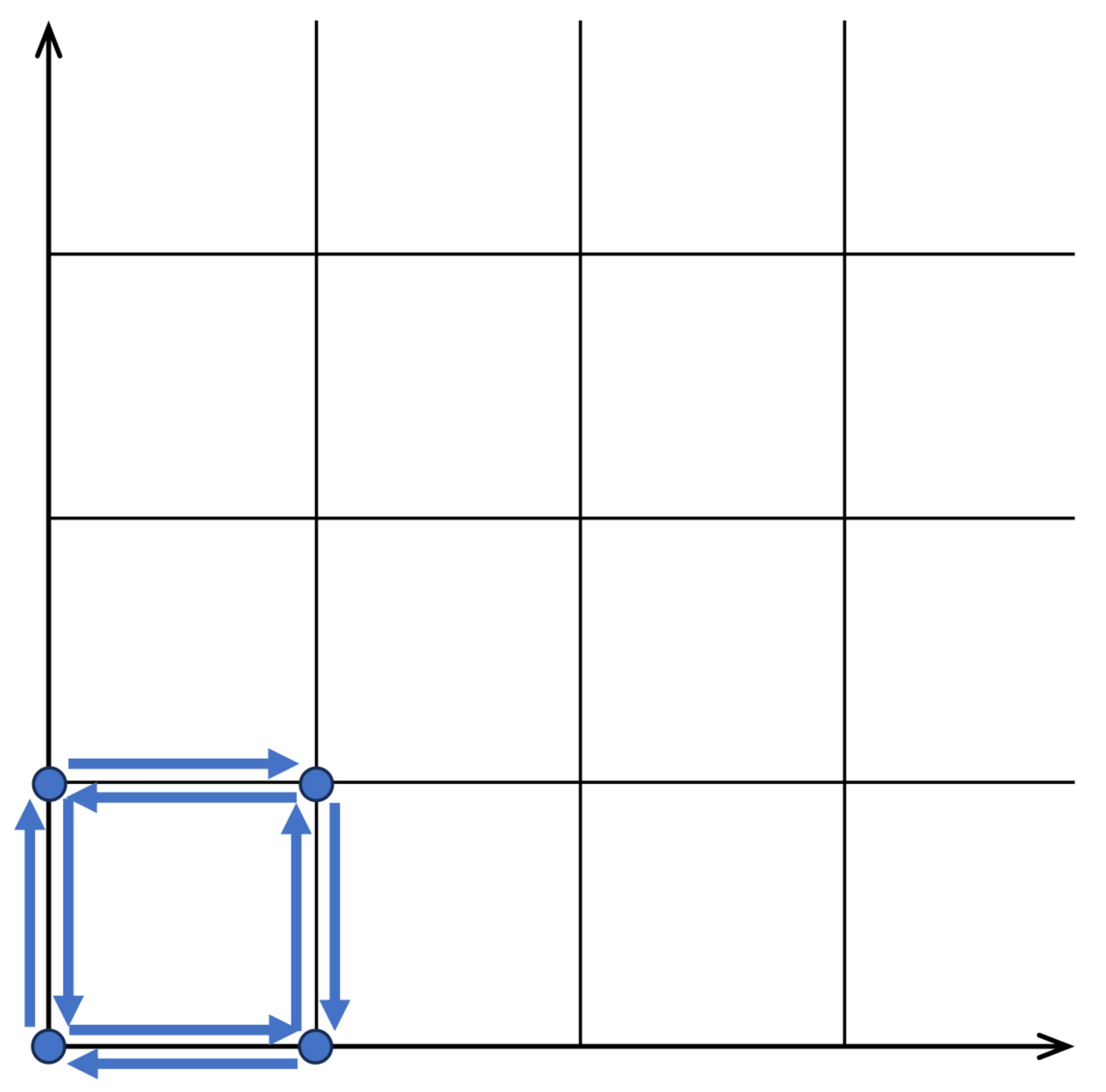}
\hskip 6mm
\includegraphics[height=4.5cm]{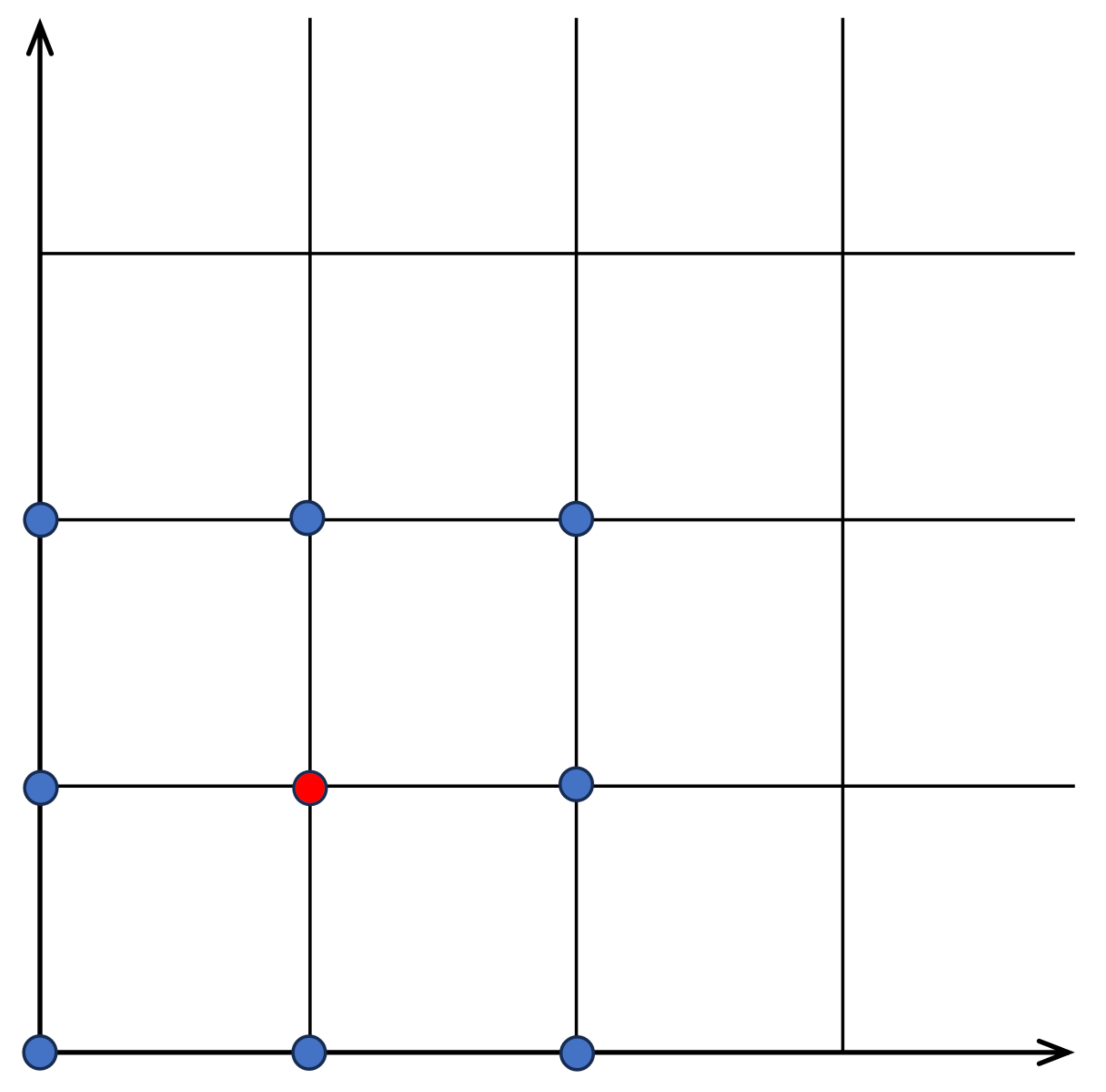}
\hskip 6mm
\includegraphics[height=4.5cm]{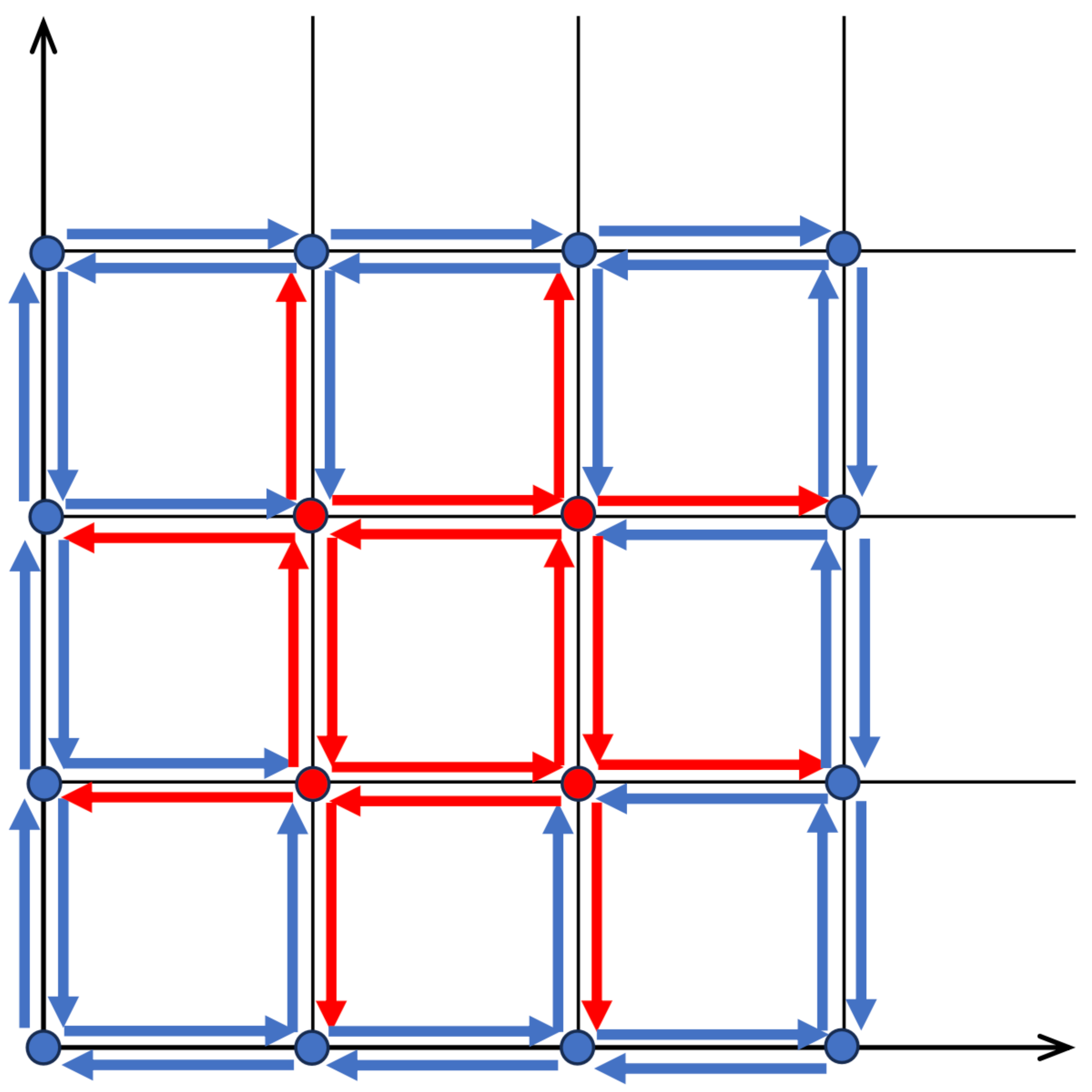}}
\caption{(a) {\it Planar {\rm E-graph} of a reversible chemical reaction network corresponding to the {\rm ODE} system~$(\ref{unit_square_system_x})$--$(\ref{unit_square_system_y})$ as its reaction rate equations with all reaction rate constants equal to $1.$} \hfill\break
(b) {\it A geometric representation of the monomials of the polynomial $h(x,y)$ given by~$(\ref{h_x_y_9})$. The blue points represent the monomials with positive coefficients and the red point represents its negative monomial $-9xy$.} \hfill\break
(c) {\it The dynamical system obtained by multiplying the network shown in {\rm (a)} by the factor $h(x,y)$ shown in {\rm (b)} $($which gives rise to the equations~$(\ref{min_reversible_system_x})$--$(\ref{min_reversible_system_y}))$ can be realized by this reversible `full-grid' network.}}
\label{figure3}
\end{figure}

We argue that the ODE system~(\ref{min_reversible_system_x})--(\ref{min_reversible_system_y}) has a weakly reversible realization given by the network in Figure~\ref{figure3}(c), and, moreover, this realization can be chosen such that all reactions have reaction rate constants $\ge 1$. 
For this, we first note that the reactions shown in {\em blue} in Figure~\ref{figure3}(c) can all be chosen to have reaction rate constants equal to 1, because these are obtained from reactions in Figure~\ref{figure3}(a) after multiplying with one of the {\em positive} monomials of $h(x,y)$. 

On the other hand, the reactions shown in {\em red} in Figure~\ref{figure3}(c) may have rate constants that are impacted by multiplication with some positive and some negative monomials of $h(x,y)$, so their size (and even their sign) are not immediately clear. Nevertheless, note that, no matter what values these rate constants have to begin with, if we increase all of them by an arbitrarily chosen constant, then {\em the effect of all these increases cancels out.} This is due to the fact that the red reactions can be partitioned into pairs, such that each pair of reactions originates at the same red node, and the two reactions within each such pair point exactly opposite from each other.
Therefore, we conclude that the system (\ref{min_reversible_system_x})--(\ref{min_reversible_system_y}) can be realized by the network shown in Figure~\ref{figure3}(c). 
Consider now a perturbed version of this system, also of degree 6, given by
\begin{eqnarray}
\frac{\mbox{d}x}{\mbox{d}t} 
&=&
\big(x^2y^2+x^2y+xy^2+x^2+y^2+x+y+1-9xy\big)\,\big(1-x+y-xy\big) \, - \, 
\varepsilon \, x \, y
\, \frac{\partial h}{\partial y}(x,y) \,,
\qquad
\label{ex2x}
\\
\frac{\mbox{d}y}{\mbox{d}t}
&=& 
\big(x^2y^2+x^2y+xy^2+x^2+y^2+x+y+1-9xy\big)\, \big(1+x-y-xy\big) \, + \, 
\varepsilon \, x \, y
\, \frac{\partial h}{\partial x}(x,y) \,,
\qquad
\label{ex2y}
\end{eqnarray}
which implies
\begin{eqnarray}
\frac{\mbox{d}x}{\mbox{d}t} 
&=&
\big(x^2y^2+x^2y+xy^2+x^2+y^2+x+y+1-9xy\big)\, \big(1-x+y-xy\big) \nonumber \\ 
&& \qquad - \, \varepsilon \, x \, y \big(2x^2y +
x^2+2xy+2y+1-9x\big) \,,
\label{min_reversible_system_perturbed_x}
\\
\frac{\mbox{d}y}{\mbox{d}t}
&=& 
\big(x^2y^2+x^2y+xy^2+x^2+y^2+x+y+1-9xy\big)\,\big(1+x-y-xy\big)
\nonumber \\
&& \qquad + \, \varepsilon \, x \, y \, \big(2xy^2+
y^2+2xy+2x+1-9y\big) \, .
\label{min_reversible_system_perturbed_y}
\end{eqnarray}
The ODE system~(\ref{min_reversible_system_perturbed_x})--(\ref{min_reversible_system_perturbed_y}) has been constructed in a similar way as the ODE system~(\ref{WR_network_small_perturbed_x})--(\ref{WR_network_small_perturbed_y}). 
Like in that example, it is easy to check that the transversality condition~(\ref{transcondition}) also holds in this case.
We therefore conclude that the ODE system~(\ref{min_reversible_system_perturbed_x})--(\ref{min_reversible_system_perturbed_y}) has an algebraic limit cycle, plotted as the blue line in Figure~\ref{figure4}(a). 

\begin{figure}
\leftline{\includegraphics[height=5.3cm]{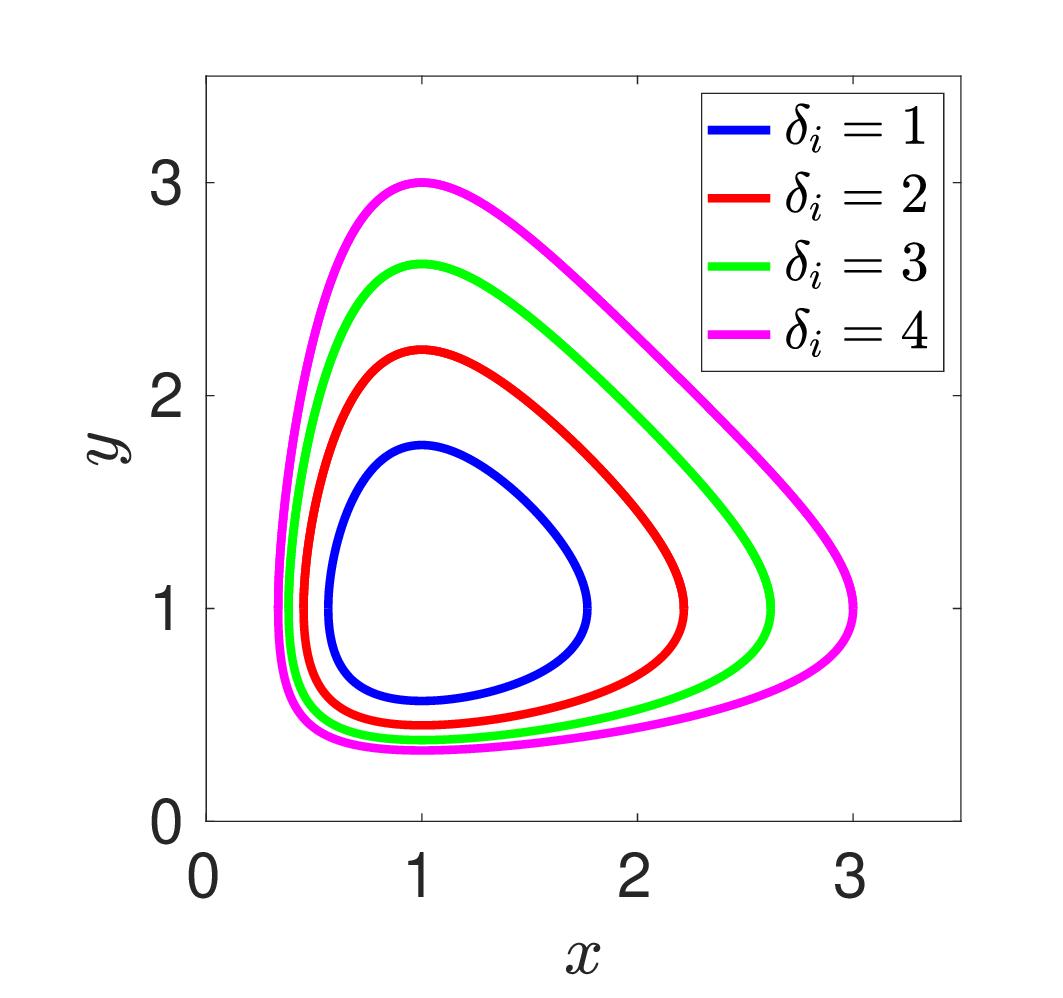}\hskip 1mm \raise 3.5mm
\hbox{\includegraphics[height=4.6cm]{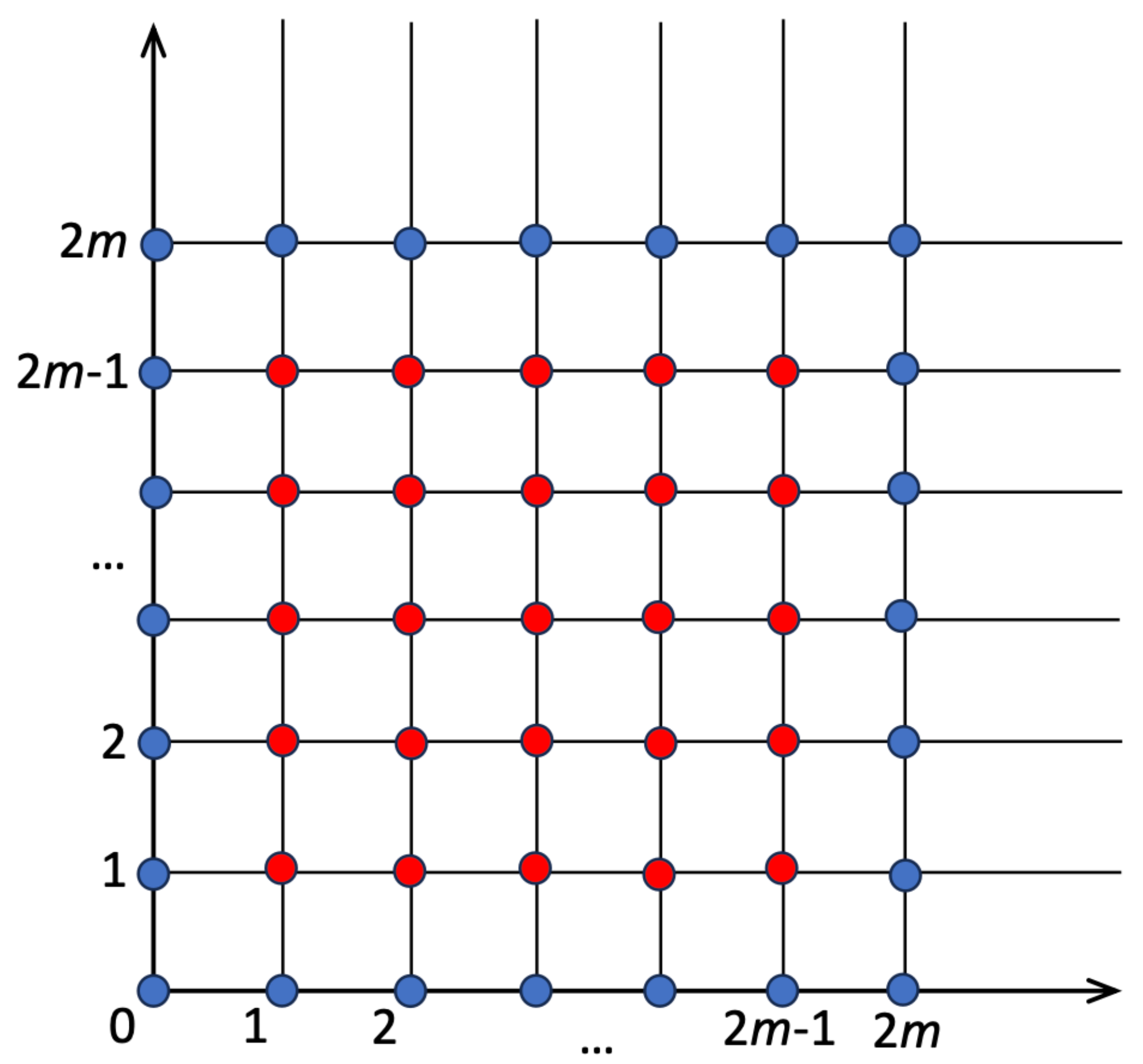}} \hskip 1.5mm \raise 3.5mm \hbox{\includegraphics[height=4.6cm]{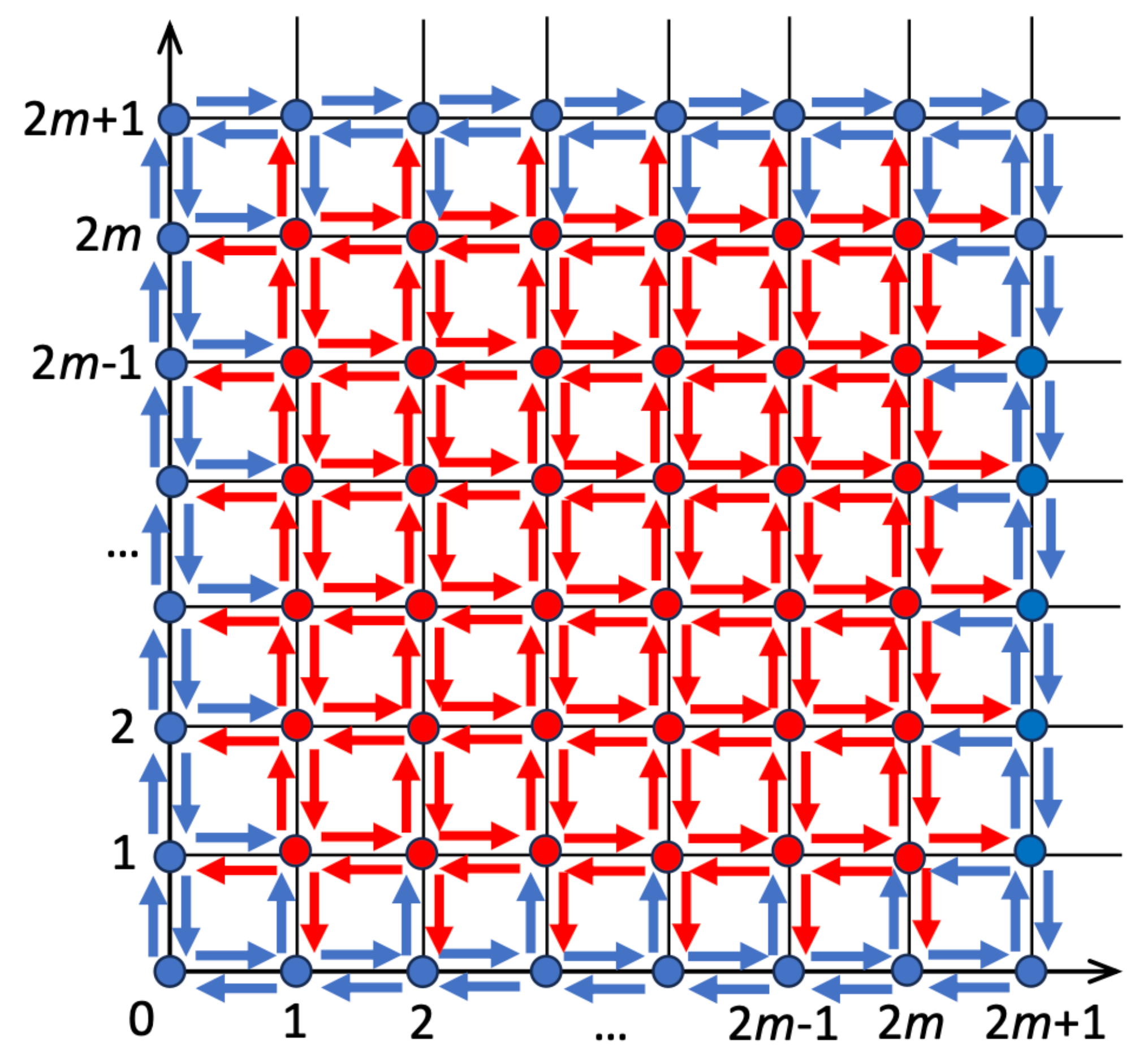}}}
\vskip -5.5cm
\leftline{(a) \hskip 5.18cm (b) \hskip 4.56cm (c)}
\vskip 5.1cm
\caption{(a) {\it The algebraic curve $h(x,y)=0$ given by~$(\ref{h_x_y_9})$ is plotted as the blue line, together with algebraic curves $h_i(x,y)=0$ given by $(\ref{h_i})$ for $\delta_i = 2$ (red line), $\delta_i=3$ (green line) and $\delta_i = 4$ (magenta line).} \hfill\break (b) {\it The negative coefficients of product $h_0$ given by~$(\ref{h_0})$ correspond to $($a subset of$)$ monomials that are represented by the red points, while the coefficients of the monomials that are represented by the blue points are all positive.} \hfill\break
(c) {\it A reversible chemical reaction network which can be modelled by the reaction rate equations written in the form of the {\rm ODE} system~$(\ref{reversible_system_perturbed_prod_hi_x})$--$(\ref{reversible_system_perturbed_prod_hi_y})$, which has $N$ algebraic limit cycles.}}
\label{figure4}
\end{figure}

We now explain why this system also has a weakly reversible realization given by the network in Figure~\ref{figure3}(c). Recall that we have observed above that the ODE system~(\ref{min_reversible_system_x})--(\ref{min_reversible_system_y}) has a realization that uses the reaction network in Figure~\ref{figure3}(c) with all reactions having reaction rate constants $\ge 1$. Note now that all the monomials in~(\ref{min_reversible_system_perturbed_x})--(\ref{min_reversible_system_perturbed_y}) that contain a factor of $\varepsilon$ already appear among the monomials of the ODE system~(\ref{min_reversible_system_x})--(\ref{min_reversible_system_y}), and if we choose $\varepsilon$ sufficiently small, the ODE system~(\ref{min_reversible_system_perturbed_x})--(\ref{min_reversible_system_perturbed_y}) can then be realized by the same reaction network as the ODE system~(\ref{min_reversible_system_x})--(\ref{min_reversible_system_y}). 
Therefore, the ODE system~(\ref{min_reversible_system_perturbed_x})--(\ref{min_reversible_system_perturbed_y}) can be realized by the reversible network shown in Figure~\ref{figure3}(c).

Our construction of a reversible chemical system~(\ref{min_reversible_system_perturbed_x})--(\ref{min_reversible_system_perturbed_y}) with an algebraic limit cycle can be generalized to obtain reversible systems with multiple limit cycles. To do that, we replace a single factor $h(x,y)$ by a product of several such factors. We construct reversible systems with several algebraic limit cycles in our next theorem.

\vskip 2mm

\begin{theorem}
There exists a reversible chemical system of order $4N+2$ that has $N$ algebraic limit cycles for all $N \in {\mathbb N}.$ In particular, we have $W^a(4N+2) \ge N$. 
\label{theoremWam}
\end{theorem}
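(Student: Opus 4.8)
The plan is to repeat the reversible single--cycle construction~(\ref{min_reversible_system_perturbed_x})--(\ref{min_reversible_system_perturbed_y}), but with the single quartic factor~(\ref{h_x_y_9}) replaced by a product of $N$ such factors. First I would fix $N$ polynomials $h_1,\dots,h_N$ of the same shape as~(\ref{h_x_y_9}), differing only in the negative coefficient of their $xy$ monomial (in the notation of~(\ref{h_i})), chosen so that the zero sets $h_1(x,y)=0,\dots,h_N(x,y)=0$ are $N$ pairwise disjoint nested ovals in the open positive quadrant, as in Figure~\ref{figure4}(a). Setting $H(x,y)=\prod_{i=1}^{N}h_i(x,y)$, a polynomial of degree $4N$ (this is the product $h_0$ of~(\ref{h_0})), I would then multiply the base reversible system~(\ref{unit_square_system_x})--(\ref{unit_square_system_y}) by $H$ and add the same $\varepsilon$--rotation as before,
\begin{align*}
\frac{\mbox{d}x}{\mbox{d}t} &= H(x,y)\,(1-x+y-xy) \;-\; \varepsilon\,x\,y\,\frac{\partial H}{\partial y}(x,y), \\
\frac{\mbox{d}y}{\mbox{d}t} &= H(x,y)\,(1+x-y-xy) \;+\; \varepsilon\,x\,y\,\frac{\partial H}{\partial x}(x,y).
\end{align*}
Because $\partial H/\partial x$ and $\partial H/\partial y$ have degree $4N-1$, the $\varepsilon$--terms have degree $4N+1$ and the whole system has degree exactly $4N+2$, with top monomial $x^{2N+1}y^{2N+1}$.

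The next step is to show that each oval $h_j(x,y)=0$ is an algebraic limit cycle. On $\{h_j=0\}$ one has $H=0$, so the unperturbed products vanish, while the product rule gives $\partial_x H=(\partial_x h_j)\prod_{i\ne j}h_i$ and $\partial_y H=(\partial_y h_j)\prod_{i\ne j}h_i$, since every other summand still carries the factor $h_j$. Substituting into $(\partial_x h_j)\,\dot x+(\partial_y h_j)\,\dot y$, the two $\varepsilon$--terms cancel identically, so $h_j$ is conserved along any trajectory starting on $\{h_j=0\}$; equivalently, $H=0$ satisfies the invariant--curve identity~(\ref{alglimcyclcond}) with a polynomial cofactor. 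To promote each invariant oval to a genuine limit cycle, I would observe that on $\{h_j=0\}$ the flow equals $\varepsilon\,x\,y\,\big(\prod_{i\ne j}h_i\big)\,(-\partial_y h_j,\partial_x h_j)$, which never vanishes: here $x,y>0$, the nested ovals are disjoint so $\prod_{i\ne j}h_i\ne 0$, and $\nabla h_j\ne 0$ on the regular level curve of $(x+1+1/x)(y+1+1/y)$ away from its unique minimiser $(1,1)$. Thus the periodic orbit carries no equilibria, and its hyperbolicity follows factor by factor from precisely the transversality condition~(\ref{transcondition}) already used for~(\ref{min_reversible_system_perturbed_x})--(\ref{min_reversible_system_perturbed_y}) and discussed in Section~\ref{sec5}.

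The step I expect to be the main obstacle is producing a \emph{reversible} chemical realization of this perturbed product system. Following the single--factor argument around Figure~\ref{figure3}, the plan is to realize $H$ times the base system on the reversible ``full--grid'' network whose nodes fill the Newton box of $H$: each positive monomial of $H$ contributes a shifted, correctly oriented copy of the reversible square network of Figure~\ref{figure3}(a) and can be given rate constants $\ge 1$, whereas the negative monomials of $H$ (the red points of Figure~\ref{figure4}(b)) contribute reactions whose rate constants are a priori of uncertain sign. As in the single--cycle case, these ``red'' reactions come in pairs leaving a common node in exactly opposite directions, so uniformly raising all of their rate constants by a suitable constant cancels in the rate equations while keeping every rate positive and the network reversible. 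Finally, every $\varepsilon$--monomial already appears in the support of the unperturbed product system, so for $\varepsilon$ small enough the perturbation is absorbed into the same network, as in Figure~\ref{figure4}(c). The delicate point is checking that this pairing/cancellation argument still accounts for \emph{all} the negative monomials of $H=\prod_i h_i$, whose number and positions proliferate with $N$, and that the associated positive rate constants can be chosen consistently; this is exactly what the full--grid structure is designed to guarantee.

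Putting these pieces together, the perturbed product system is a reversible chemical system of order $4N+2$ that possesses the $N$ disjoint hyperbolic algebraic limit cycles $h_1(x,y)=0,\dots,h_N(x,y)=0$, which yields $W^a(4N+2)\ge N$ and proves the theorem.
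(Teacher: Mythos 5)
Your proposal is correct and follows essentially the same route as the paper's own proof: the same family of quartic ovals $h_i$ from~(\ref{h_i}) with distinct $xy$-coefficients, the same product $h_0=\prod_{i=1}^N h_i$ of degree $4N$, the same $\varepsilon$-perturbed system~(\ref{reversible_system_perturbed_prod_hi_x})--(\ref{reversible_system_perturbed_prod_hi_y}), the same appeal to the transversality condition~(\ref{transcondition}) of Theorem~\ref{theorgen}, and the same full-grid pairing argument (positive monomials give shifted copies with rates $\ge 1$, opposite-pointing red reaction pairs absorb the negative monomials, and the $\varepsilon$-terms are absorbed for small $\varepsilon$) for the reversible realization. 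The only differences are cosmetic: you spell out the invariance of each oval $h_j=0$ via the product rule, which the paper leaves implicit, while the paper secures transversality by choosing all $\delta_i$ close to $1$ and relying on continuity from the single-cycle case, a precaution you replace with a factor-by-factor assertion.
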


\begin{proof}
We define
\begin{equation}
h_i(x,y) = x^2y^2+x^2y+xy^2+x^2+y^2+x+y+1-(8+\delta_i) \, x \, y \, ,
\qquad \mbox{for} \quad i=1,2,\dots, N \,,
\label{h_i}
\end{equation}
and for some mutually distinct real positive numbers $\delta_1,$ $\delta_2,$ $\dots,$ $\delta_N$. Then the equation $h_i(x,y) = 0$ can be rewritten as
$$
\left(x + 1 + \frac{1}{x} \right)
\left(y + 1 + \frac{1}{y} \right)
=
\delta_i + 9 \,,
$$
which has two solutions for 
$$
1 \, + \,
\frac{\delta_i
\,- \,\sqrt{\delta_i (12 + \delta_i)}}{6}
\,<\,
x
\,<\,
1 \, + \,
\frac{\delta_i
\, + \,\sqrt{\delta_i (12 + \delta_i)}}{6}
$$ 
giving a simple closed curve in the positive quadrant for each $\delta_i>0$. As an illustration, we visualize four such curves in Figure~\ref{figure4}(a) for
$\delta_i=1$, $\delta_i=2$, $\delta_i=3$ and $\delta_i=4.$ We note that $h_i$ given by~(\ref{h_i}) is equal to $h$ given by (\ref{h_x_y_9}) for $\delta_i=1$, which is plotted as the blue line in Figure~\ref{figure4}(a). In particular, we note that $h_i(x,y)=0$ in equation~(\ref{h_i}) for mutually distinct real positive numbers $\delta_1,$ $\delta_2,$ $\dots,$ $\delta_N$ give rise to $N$ disjoint algebraic curves in the positive quadrant. Now denote 
\begin{align}
    h_0 = \prod_{i=1}^N h_i 
\label{h_0}
\end{align}

\vskip -3.5mm

\noindent
and consider the system

\vskip -4.5mm

\begin{eqnarray}
\label{reversible_system_perturbed_prod_hi_x}
\frac{\mbox{d}x}{\mbox{d}t} 
&=&
h_0(x,y) \, \big(1-x+y-xy\big) 
\, - \, \varepsilon \, x \, y \, \frac{\partial h_0}{\partial y}\,,
\\ 
\label{reversible_system_perturbed_prod_hi_y}
\frac{\mbox{d}y}{\mbox{d}t}
&=& 
h_0(x,y) \, \big(1+x-y-xy\big)
\, + \, \varepsilon \, x \, y \, \frac{\partial h_0}{\partial x}\,.
\end{eqnarray}
The curves of the form $h_i(x,y) = 0$ lie along periodic trajectories of the system (\ref{reversible_system_perturbed_prod_hi_x})--(\ref{reversible_system_perturbed_prod_hi_y}), and a quick way to ensure that each one of the curves $h_i(x,y) = 0$ is actually a limit cycle is to check the transversality condition~(\ref{transcondition}) in Theorem~\ref{theorgen}. This can be done without additional calculations (by relying on the case of the ODE system~(\ref{min_reversible_system_perturbed_x})--(\ref{min_reversible_system_perturbed_y})) if we assume that we have chosen all the $\delta_i$ to be close enough to 1. 

The polynomial $h_0$ has degree $4N$. From the definition of $h_0$ we conclude that if a monomial $x^p y^q$ of $h_0$ has a {\em negative} coefficient, then we have $1 \le p \le 4N-1$ and $1 \le q \le 4N-1$. This situation is illustrated in Figure~\ref{figure4}(b), as follows: if the points in Figure~\ref{figure4}(b) represent monomials of $h_0$, then all the negative monomials are among the red points, and all the blue points correspond to {\em positive} monomials. Using the same argument as in Figures~\ref{figure3}(b) and~\ref{figure3}(c), we conclude that the ODE system~(\ref{reversible_system_perturbed_prod_hi_x})--(\ref{reversible_system_perturbed_prod_hi_y}) has a reversible realization which uses the reaction network illustrated in Figure~\ref{figure4}(c), and the ODE system~(\ref{reversible_system_perturbed_prod_hi_x})--(\ref{reversible_system_perturbed_prod_hi_y}) has at least $N$ algebraic limit cycles in the positive quadrant, given by the equations $h_i(x,y) = 0$. Therefore we have $W^a(4N+2) \ge N$. 
\end{proof}

\section{Robust limit cycles}

\label{sec5}

The reaction rate equations~(\ref{ex1x})--(\ref{ex1y}), (\ref{ex2x})--(\ref{ex2y}) and~(\ref{reversible_system_perturbed_prod_hi_x})--(\ref{reversible_system_perturbed_prod_hi_y}) can be written in the following general form

\vskip -6mm

\begin{eqnarray}
\frac{\mbox{d}x}{\mbox{d}t}
& = &
h(x,y) \, f_0(x,y) \, - \, \varepsilon \, x \, y \, \frac{\partial h}{\partial y}(x,y) \,, 
\label{odex}
\\
\frac{\mbox{d}y}{\mbox{d}t}
& = &
h(x,y) \, g_0(x,y) \, + \, \varepsilon \, x \, y \, \frac{\partial h}{\partial x}(x,y) \,,
\label{odey}
\end{eqnarray}
where $f_0(x,y),$ $g_0(x,y)$ and $h(x,y)$ are polynomials. For example, the ODE system~(\ref{ex1x})--(\ref{ex1y}) is given in the general form~(\ref{odex})--(\ref{odey}) for
\begin{equation}
h(x,y) = x^2 + x y^2 + y - 4xy,
\qquad
f_0(x,y) = 1-x
\qquad
\mbox{and}
\qquad
g_0(x,y) = x-y.
\label{ex43boros}
\end{equation}
Substituting $\varepsilon=0$, we get the ODE system~(\ref{WR_network_small_x})--(\ref{WR_network_small_y}), which can be realized as a chemical system and has a continuum of stable steady states, given by $h(x,y)=0$. We illustrate this in Figure~\ref{figure5}(a), where we plot the set $h(x,y)=0$ as the black dashed line together with fifteen illustrative trajectories starting at the boundary of the visualized domain $[0,4] \times [0,4]$ and three illustrative trajectories starting inside the oval $h(x,y)=0$. We observe that all calculated trajectories approach an equilibrium point inside the set $h(x,y)=0$ as $t \to \infty.$ 

In the proof of Lemma~\ref{lemmaWa1}, we have found a weakly reversible realization of the ODE system~(\ref{WR_network_small_perturbed_x})--(\ref{WR_network_small_perturbed_y}) for $\varepsilon \in (0, 1/6)$. However, the ODE system~(\ref{WR_network_small_perturbed_x})--(\ref{WR_network_small_perturbed_y}) 
can be realized as a chemical system for all $\varepsilon \ge 0.$ For example, if $\varepsilon=1$, it simplifies to
\begin{eqnarray}
\frac{\mbox{d}x}{\mbox{d}t}
& = &
x^2 + x y^2 + y - 6xy + 8x^2y - 3 x^2y^2 -x^3 ,
\label{odex43b}
\\
\frac{\mbox{d}y}{\mbox{d}t}
& = &
x^3 + x^2 y^2 + x y - 3x^2y - y^2.
\label{odey43b}
\end{eqnarray}
This ODE system has one stable limit cycle as illustrated in Figure~\ref{figure5}(b), where we calculate trajectories for the same initial conditions as in Figure~\ref{figure5}(a). We observe that all calculated trajectories approach the stable limit cycle $h(x,y)=0.$ The existence of a stable limit cycle can also be established for the general system~(\ref{odex})--(\ref{odey}). We formulate it as our next theorem.

\begin{figure}
\leftline{(a) \hskip 7.75cm (b)}
\centerline{\hskip 3mm \includegraphics[height=6.9cm]{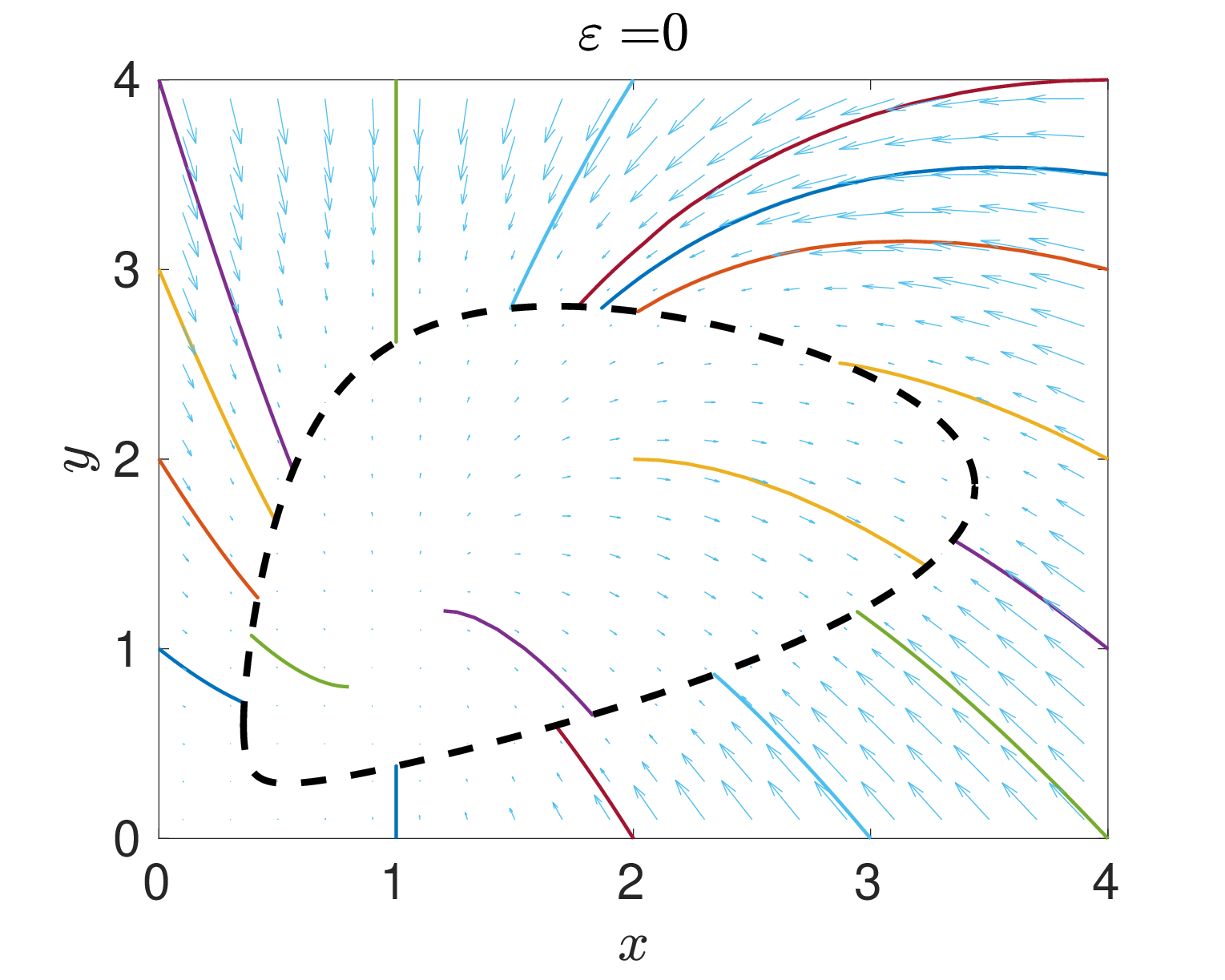}\hskip -3mm \includegraphics[height=6.9cm]{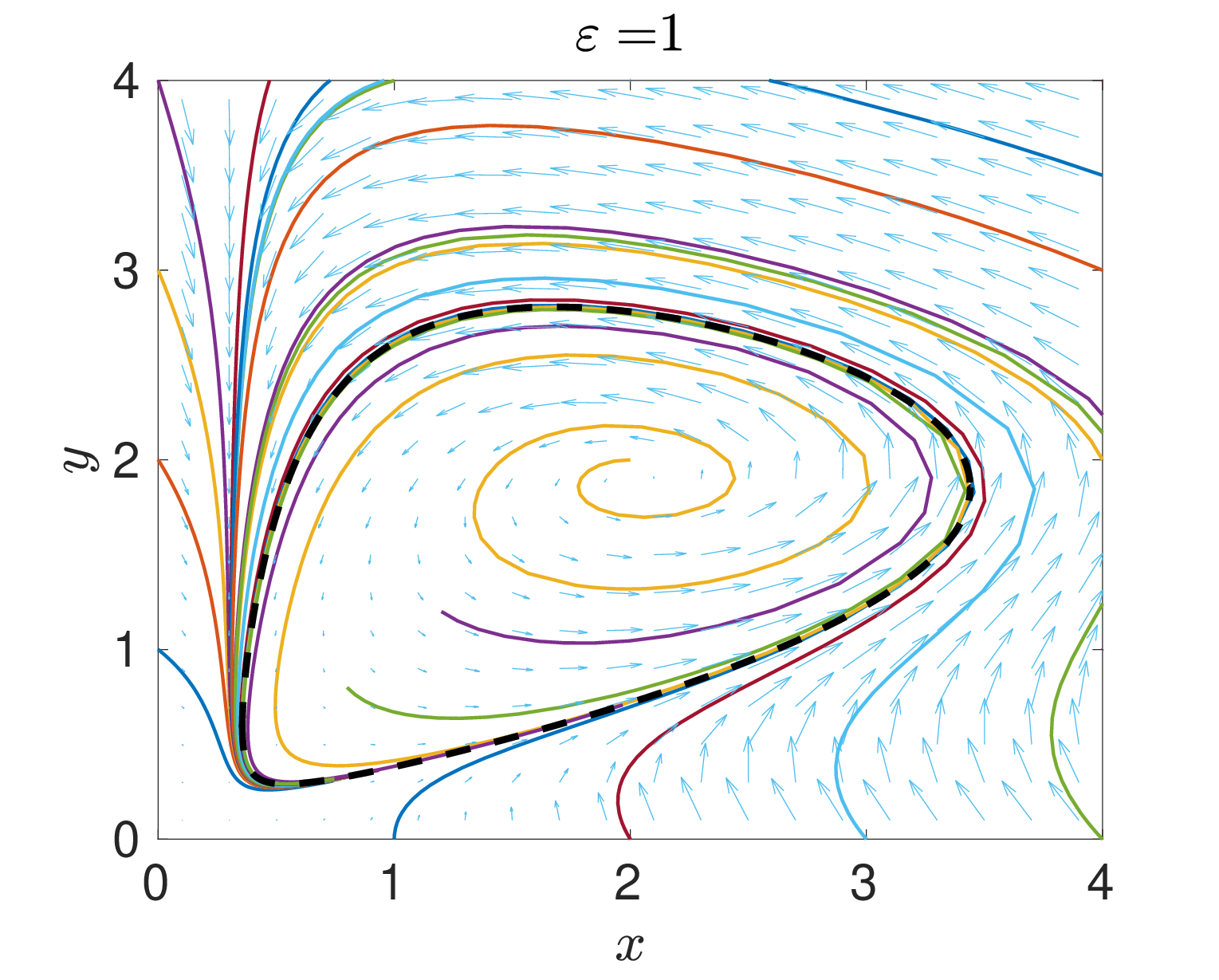}}
\caption{(a) {\it The phase plane of the {\rm ODE} system~$(\ref{WR_network_small_x})$--$(\ref{WR_network_small_y})$, i.e. the {\rm ODE} system~$(\ref{WR_network_small_perturbed_x})$--$(\ref{WR_network_small_perturbed_y})$ for $\varepsilon=0$. We plot
the algebraic curve $h(x,y)\equiv x^2 + x y^2 + y - 4xy = 0$ (black dashed line) together with some illustrative 
trajectories starting at the boundary of the visualized square. All trajectories converge to a stable steady state
on the curve $h(x,y)=0.$ } \hfill\break
(b) {\it The phase plane of the {\rm ODE} system~$(\ref{odex43b})$--$(\ref{odey43b})$, i.e. the {\rm ODE} system~$(\ref{WR_network_small_perturbed_x})$--$(\ref{WR_network_small_perturbed_y})$ for $\varepsilon=1$. The algebraic curve
$h(x,y)=0$ becomes a stable limit cycle for $\varepsilon>0$.}}
\label{figure5}
\end{figure}

\vskip 2mm

\begin{theorem}
Consider the {\rm ODE} system~$(\ref{odex})$--$(\ref{odey})$, where $f_0(x,y),$ $g_0(x,y)$ and $h(x,y)$ are real polynomials. Assume that the set where $h(x,y) = 0$ contains $N$ isolated simple closed curves, and also assume that the transversality condition
\begin{equation}
f_0(x,y) \, \frac{\partial h}{\partial x}(x,y)
\, + \,
g_0(x,y) \, \frac{\partial h}{\partial y}(x,y)
\, \neq \,
0
\label{transcondition}
\end{equation}
holds along all these $N$ isolated closed curves. 

Then any simple closed curve where $h(x,y) = 0$ and 
\begin{equation}
f_0(x,y) \, \frac{\partial h}{\partial x}(x,y)
\, + \,
g_0(x,y) \, \frac{\partial h}{\partial y}(x,y)
\, < \,
0
\label{transcondition_minus}
\end{equation}
is a stable algebraic limit cycle of the {\rm ODE} system~$(\ref{odex})$--$(\ref{odey})$, for all values of $\varepsilon > 0$. Similarly, any simple closed curve where $h(x,y) = 0$ and 
\begin{equation}
f_0(x,y) \, \frac{\partial h}{\partial x}(x,y)
\, + \,
g_0(x,y) \, \frac{\partial h}{\partial y}(x,y)
\, > \,
0
\label{transcondition_plus}
\end{equation}
is an unstable algebraic limit cycle of the {\rm ODE} system~$(\ref{odex})$--$(\ref{odey})$, for all values of $\varepsilon > 0$. \hfill\break In particular, the {\rm ODE} system~$(\ref{odex})$--$(\ref{odey})$ has $N$ algebraic limit cycles, for all values of $\varepsilon > 0$.
\label{theorgen}
\end{theorem}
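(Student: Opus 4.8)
The plan is to exploit the engineered structure of the system~(\ref{odex})--(\ref{odey}), in which the $\varepsilon$-terms are arranged precisely so that $\{h=0\}$ is flow-invariant for every $\varepsilon$. The conceptual core is a one-line computation: differentiating $h$ along trajectories and writing $h_x=\partial h/\partial x$, $h_y=\partial h/\partial y$, one has $\dot h = h_x\dot x + h_y\dot y$, and upon substituting~(\ref{odex})--(\ref{odey}) the two contributions proportional to $\varepsilon\,xy\,h_x h_y$ cancel exactly, leaving
\begin{equation}
\dot h \;=\; h(x,y)\,\big(f_0\,h_x + g_0\,h_y\big).
\label{hdotplan}
\end{equation}
Thus $s(x,y):=f_0 h_x + g_0 h_y$ is exactly the cofactor of $h$ in the sense of~(\ref{alglimcyclcond}), it coincides with the left-hand side of the transversality expression~(\ref{transcondition}), and, crucially, it is independent of $\varepsilon$. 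From~(\ref{hdotplan}) it is immediate that $\{h=0\}$ is invariant, so each of the $N$ isolated simple closed curves is an invariant set of the flow for every $\varepsilon>0$.

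Next I would show each such curve is in fact a periodic orbit. On $\{h=0\}$ the vector field reduces to $(\dot x,\dot y)=\varepsilon\,xy\,(-h_y,h_x)$, which is precisely $\varepsilon\,xy$ times the tangent direction of the level curve; this is consistent with invariance and has speed $\varepsilon\,xy\,|\nabla h|$. Here the transversality hypothesis~(\ref{transcondition}) does double duty: since $s=0$ wherever $\nabla h=0$, the assumption $s\neq 0$ forces $\nabla h\neq 0$ everywhere on the curve, so the curve is a regular (smooth) simple closed curve with $|\nabla h|>0$. As the relevant curves lie in the open positive quadrant we also have $xy>0$, hence for $\varepsilon>0$ the restricted field is nowhere vanishing. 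Therefore there are no equilibria on the curve and, the curve being a topological circle, the motion on it is periodic.

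The sign of the cofactor $s$ then determines stability. Because $s$ is continuous and nonzero on each compact curve, it has a constant sign there, and by continuity this sign persists on a tubular neighborhood. Using $h$ as a coordinate transverse to the curve—legitimate since $\nabla h\neq 0$—relation~(\ref{hdotplan}) shows that along a curve where $s<0$, as in~(\ref{transcondition_minus}), one has $\dot h\le -c\,h$ for some $c>0$ on the side $h>0$ and, symmetrically, $\dot h\ge -c\,h$ on the side $h<0$; hence $|h|$ decreases monotonically and trajectories that start near the curve remain trapped in the thin annulus $\{|h|\le\delta\}$ with $h\to 0$. A Poincar\'e--Bendixson argument in this annulus—which contains no equilibria—then forces the $\omega$-limit set to be the periodic orbit $\{h=0\}$ itself, giving asymptotic stability; in particular nearby orbits are not periodic, so the cycle is isolated and is a genuine stable algebraic limit cycle. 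The case $s>0$, i.e.~(\ref{transcondition_plus}), is identical after reversing time and yields an unstable limit cycle. Applying this dichotomy to all $N$ curves produces exactly $N$ algebraic limit cycles for every $\varepsilon>0$.

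The main obstacle is this last step: turning the scalar estimate~(\ref{hdotplan}) into a rigorous statement about the orbit. One must verify that $h$ genuinely serves as a transverse coordinate on a tubular neighborhood of the compact regular curve, that trajectories cannot leave the neighborhood tangentially before $|h|$ collapses, and that the planar Poincar\'e--Bendixson theorem is applicable so that the $\omega$-limit set is identified with the curve rather than with some interior equilibrium or larger invariant set. By contrast, the algebraic cancellation producing~(\ref{hdotplan}) and the observation that $s$ governs both regularity and stability are essentially immediate; it is the analytic packaging of stability where the care is needed.
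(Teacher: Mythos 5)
Your proposal is correct and follows essentially the same route as the paper's proof: the exact cancellation of the $\varepsilon$-terms (which the paper phrases geometrically, as the field $\varepsilon\,xy\,(-h_y,h_x)$ pointing along level sets of $h$, so that crossing behaviour is governed by $(f_0,g_0)$), transversality forcing $\nabla h \neq 0$ so each curve is smooth, and then trapping trajectories in forward-invariant annuli $\{|h|\le\delta\}$ containing no fixed points and no other periodic orbits, so that a Poincar\'e--Bendixson argument yields convergence to the cycle, with time reversal handling the unstable case. Your explicit identity $\dot h = h\,(f_0 h_x + g_0 h_y)$ and the remark that $xy>0$ is needed on the curves are welcome sharpenings of details the paper leaves implicit, but the argument is the same.
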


\begin{proof}
The transversality condition~(\ref{transcondition}) implies that the gradient of $h$ does not vanish along the curve $h(x,y) = 0$. In particular, any simple closed curve where $h(x,y) = 0$ is a smooth curve. 
Note that the vector field $$
(-h_y,h_x) \equiv \left(- \frac{\partial h}{\partial y},
\frac{\partial h}{\partial x}
\right)
$$ 
always points {\em along} any curve of the form $h(x,y) = \alpha$, {\it i.e.} never points {\em across} it; the same is true of the vector field $\varepsilon {\hskip 0.2mm} x {\hskip 0.2mm} y {\hskip 0.2mm} (-h_y, h_x)$. Therefore, the dynamics of the system~(\ref{odex})--(\ref{odey}) across curves of the form $h(x,y) = \alpha$ is determined by the vector field $(f_0, g_0)$. 

Let us focus on the case where the  condition~(\ref{transcondition_minus}) is satisfied along one such curve $\mathcal C$. (The other case is completely analogous.) Then there exists an annular neighborhood of $\mathcal C$ denoted  $\mathcal A_{\mathcal C}(\delta)$, which is delimited by two curves where $h(x,y) = \pm \delta$, for some small number $\delta >0$, such that $\mathcal A_{\mathcal C}(\delta)$ is forward invariant for the system~(\ref{odex})--(\ref{odey}). To show this, we observe that the condition (\ref{transcondition_minus}) implies that, for $\delta$ small enough, the two boundary curves of $\mathcal A_{\mathcal C}(\delta)$ where $h(x,y) = \pm \delta$ are smooth and (\ref{transcondition_minus}) holds along them. Therefore, along the two boundary curves of $\mathcal A_{\mathcal C}(\delta)$ the vector field~(\ref{odex})--(\ref{odey}) points towards the interior of $\mathcal A_{\mathcal C}(\delta)$. 

Moreover, if we fix some $\delta_0 >0$ such that $A_{\mathcal C}(\delta)$ is forward invariant for the system~(\ref{odex})--(\ref{odey}) for all $\delta \in (0,\delta_0]$, then it follows that $\mathcal A_{\mathcal C}(\delta_0)$ cannot contain any periodic orbit other than $\mathcal C$, and cannot contain any fixed point. Therefore, all the forward trajectories that start within $\mathcal A_{\mathcal C}(\delta_0)$ must converge to $\mathcal C$, which implies that $\mathcal C$ is a stable limit cycle of the system~(\ref{odex})--(\ref{odey}).
\end{proof}

\smallskip

\noindent
The vector field $(f_0,g_0)$ given by~(\ref{ex43boros}) has a single critical point $(1,1)$ inside the oval $h(x,y)=0$, and the transversality condition~(\ref{transcondition}) is satisfied in our example in Figure~\ref{figure5}(b). Such an approach is also used in our proof of Theorem~\ref{theoremWam} to obtain the ODE system~(\ref{reversible_system_perturbed_prod_hi_x})--(\ref{reversible_system_perturbed_prod_hi_y}),
where we have
\begin{equation}
f_0(x,y)
\, = \, 1-x+y-xy \,
\qquad
\mbox{and}
\qquad
g_0(x,y)
\, = \,
1+x-y-xy.
\label{multiplecyclesf0g0}
\end{equation}
Then the vector field $(f_0,g_0)$ has one critical point at $(x,y)=(1,1)$, which is inside the ovals~(\ref{h_i}) for any $\delta_i > 0.$ Consider $h_0(x,y)$ in the ODE system~(\ref{reversible_system_perturbed_prod_hi_x})--(\ref{reversible_system_perturbed_prod_hi_y}) in the product form~(\ref{h_0}) where $N=4$, $\delta_1=1$, $\delta_2=2$, $\delta_3=3$ and $\delta_4=4.$ Such curves have been visualized 
in Figure~\ref{figure4}(a). They are algebraic limit cycles of the ODE system~(\ref{reversible_system_perturbed_prod_hi_x})--(\ref{reversible_system_perturbed_prod_hi_y}). In Figure~\ref{figure6}(a), we plot ten illustrative trajectories of the ODE system~(\ref{reversible_system_perturbed_prod_hi_x})--(\ref{reversible_system_perturbed_prod_hi_y}). We observe that the trajectories starting at the corners of our visualized domain $[0,3.5] \times [0,3.5]$ approach the outer limit cycle corresponding to $\delta_4=4,$ while trajectories starting inside this oval converge either to it, or to the limit cycle corresponding to $\delta_2=2$ or to a fixed point. The limit cycles corresponding to $\delta_2=2$ and $\delta_4=4$ are stable and they satisfy our transversality condition~(\ref{transcondition_minus}). This is also confirmed in Figure~\ref{figure6}(b), where we visualize the subdomains
\begin{eqnarray}
\Omega_s
&=&
\left\{ (x,y) \in [0,\infty)^2 \; \bigg|
\;
f_0(x,y) \, \frac{\partial h}{\partial x}(x,y)
\, + \,
g_0(x,y) \, \frac{\partial h}{\partial y}(x,y)
\, < \,
0
\right\}
\label{transsubdomains}
\\
\Omega_u
&=&
\left\{ (x,y) \in [0,\infty)^2 \; \bigg|
\;
f_0(x,y) \, \frac{\partial h}{\partial x}(x,y)
\, + \,
g_0(x,y) \, \frac{\partial h}{\partial y}(x,y)
\, > \,
0
\right\}
\rule{0pt}{7mm}
\label{transsubdomainu}
\end{eqnarray}
using magenta and white shading, respectively. The limit cycles corresponding to $\delta_2=2$ and $\delta_4=4$ are inside the domain~$\Omega_s$, while the limit cycles corresponding to $\delta_1=1$ and $\delta_3=3$ are inside the domain~$\Omega_u$ and they are unstable, satisfying the transversality condition~(\ref{transcondition_plus}). 

\begin{figure}
\leftline{(a) \hskip 7.6cm (b)}
\centerline{\hskip 3mm \includegraphics[height=6.7cm]{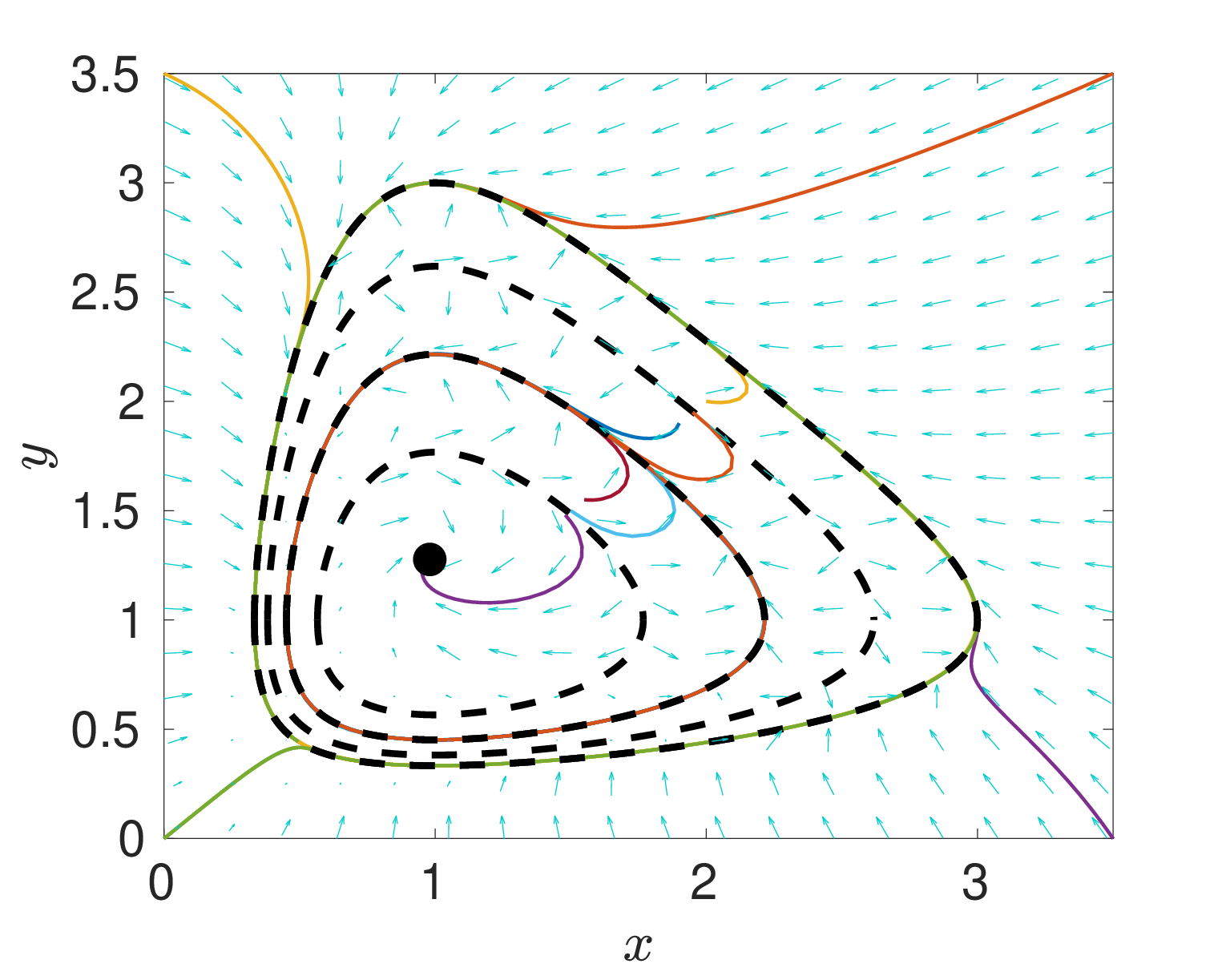}\hskip -3mm \includegraphics[height=6.7cm]{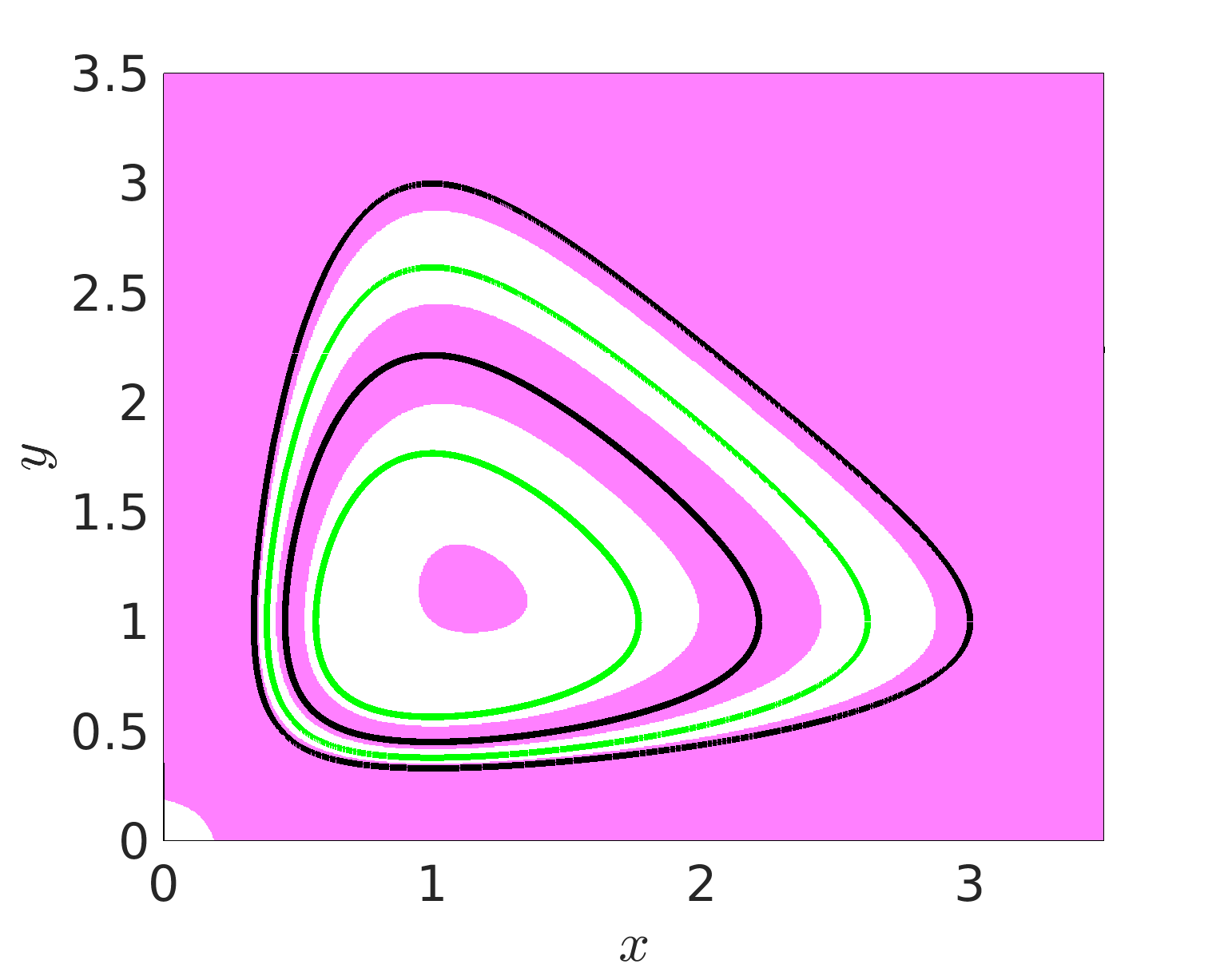}}
\caption{(a) {\it The phase plane of the {\rm ODE} system~$(\ref{reversible_system_perturbed_prod_hi_x})$--$(\ref{reversible_system_perturbed_prod_hi_y})$ for $\varepsilon=0.1$. We plot
the algebraic curve $h_0(x,y)$ (black dashed line) together with ten illustrative 
trajectories. } \hfill\break
(b) {\it The visualization of domains $\Omega_s$ $($magenta shading$)$ and $\Omega_u$ $($white shading$)$ given by~$(\ref{transsubdomains})$ and $(\ref{transsubdomainu})$, respectively. The stable algebraic limit cycles, corresponding to $\delta_2=2$ and $\delta_4=4$, are plotted as the black lines, while the unstable algebraic limit cycles, corresponding to $\delta_1=1$ and $\delta_3=3$, are plotted as the green lines.}}
\label{figure6}
\end{figure}

The systems with limit cycles which are used to achieve lower bounds in Tables~\ref{table1} and~\ref{table2} have been constructed using the standard definition of the limit cycle as an isolated closed trajectory. While such limit cycles can be stable, they are sometimes difficult to observe in numerical simulations. For example, consider the ODE system~(\ref{christopher1})--(\ref{christopher2}) which is a polynomial system of degree 4 with three hyperbolic algebraic limit cycles in the positive quadrant. The ODE system~(\ref{christopher1})--(\ref{christopher2}) shares some similarities with our general form~(\ref{odex})--(\ref{odey}) for $(f_0,g_0)=(1,1)$ if factor $\varepsilon {\hskip 0.2mm} x {\hskip 0.2mm} y$ is replaced by $7x-y.$ However, if we define the subdomains $\Omega_s$ and $\Omega_u$ by~(\ref{transsubdomains})--(\ref{transsubdomainu}) for $(f_0,g_0)=(1,1)$, then we observe that some parts of each limit cycle of the ODE system~(\ref{christopher1})--(\ref{christopher2})
are in $\Omega_s$ and some parts are in $\Omega_u.$
While the application of~\cite[Theorem 1]{christopher2001polynomial} can help us to deduce that each limit cycle is hyperbolic, the trajectories are attracted by parts of the limit cycle in $\Omega_s$ and repelled by parts of the limit cycle in $\Omega_u$. In particular, numerical errors can make it impossible to observe a trajectory which would for long positive times (resp. long negative times) approach the (theoretically) stable (resp. unstable) limit cycle in computational studies of such systems. However, if we do not attempt to minimize the degree of the polynomial on the right hand side of the ODE system~(\ref{odex})--(\ref{odey}), then it is possible to find $f_0$ and $g_0$ such that all limit cycles are fully in $\Omega_s$. We state this result as our next theorem.

\vskip 2mm

\begin{theorem}Let $h: {\mathbb R}^2 \to {\mathbb R}$ be a polynomial of degree $n_h$ and let the real algebraic curve $h(x,y)=0$ contain $N \in {\mathbb N}$ ovals in the $($strictly$)$ positive quadrant $(0,\infty) \times (0,\infty)$. Assume that 
\begin{equation}
\nabla h(x,y) 
=
\begin{pmatrix}
\displaystyle
\frac{\partial h}{\partial x}(x,y) \\
\displaystyle
\rule{0pt}{7mm} \frac{\partial h}{\partial y}(x,y)
\end{pmatrix}
\,\ne\,
\begin{pmatrix}
0 \\ 0    
\end{pmatrix}
\qquad
\mbox{for all $(x,y)$ satisfying $h(x,y)=0.$}
\label{nonzerograd}
\end{equation}
Then the {\rm ODE} system
\begin{eqnarray}
\frac{\mbox{{\rm d}}x}{\mbox{{\rm d}}t}
& = &
- \, x \, y \,
h(x,y) \, \frac{\partial h}{\partial x}(x,y) \, - \, \varepsilon \, x \, y \, \frac{\partial h}{\partial y}(x,y) \,, 
\label{odexchem}
\\
\frac{\mbox{{\rm d}}y}{\mbox{{\rm d}}t}
& = &
- \, x \, y \,
h(x,y) \, \frac{\partial h}{\partial y}(x,y) \, + \, \varepsilon \, x \, y \, \frac{\partial h}{\partial x}(x,y) \,,
\label{odeychem}
\end{eqnarray}
is a polynomial {\rm ODE} system of degree $n=2\,n_h+1$ which can be realized as a chemical reaction network under mass-action kinetics $($for any value of parameter $\varepsilon)$. The chemical system~$(\ref{odexchem})$--$(\ref{odeychem})$ has $N$~stable algebraic limit cycles contained in the components of the curve $h(x,y)=0$ for all $\varepsilon>0$ and the cofactor, defined by~$(\ref{alglimcyclcond})$, is equal to
$s(x,y) \,=\,
- \, x \, y 
\parallel \! \nabla h(x,y) \!\parallel^2.$ 
\label{theorem1}
\end{theorem}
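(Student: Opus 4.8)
The plan is to recognize the system~(\ref{odexchem})--(\ref{odeychem}) as a special instance of the general form~(\ref{odex})--(\ref{odey}) treated in Theorem~\ref{theorgen}, namely the one obtained by the choice
\begin{equation*}
f_0(x,y) = -\,x\,y\,\frac{\partial h}{\partial x}(x,y)
\qquad\text{and}\qquad
g_0(x,y) = -\,x\,y\,\frac{\partial h}{\partial y}(x,y).
\end{equation*}
Once this identification is made, every assertion of the theorem reduces to a short verification, and the genuine content lies in the fact that this choice of $f_0,g_0$ renders the transversality expression of Theorem~\ref{theorgen} sign-definite (and negative) on the ovals. Concretely, I would establish the four claims in turn: the degree equals $2n_h+1$; the system is realizable as a mass-action chemical network; each oval is a \emph{stable} algebraic limit cycle; and the cofactor equals $-xy\,\|\nabla h\|^2$.

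For the degree, I would observe that $\partial h/\partial x$ and $\partial h/\partial y$ have degree $n_h-1$, so the dominant term $x\,y\,h\,\partial h/\partial x$ has degree $2+n_h+(n_h-1)=2n_h+1$, while the $\varepsilon$-term $x\,y\,\partial h/\partial y$ has degree only $n_h+1$; by Euler's identity at least one partial derivative attains full degree $n_h-1$, so the system has degree exactly $2n_h+1$. For chemical realizability I would appeal to Lemma~\ref{lemma1}: since both right-hand sides carry the common factor $x\,y$, the polynomial $f$ is divisible by $x$ and $g$ is divisible by $y$, whence $f(0,y)\equiv 0$ and $g(x,0)\equiv 0$. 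In the notation of~(\ref{fg_with_coefficients}) this means every coefficient $a_{0,i}$ and $b_{i,0}$ vanishes, so condition~(\ref{condO}) holds trivially, and Lemma~\ref{lemma1} places the system in the relevant $\mathbb{S}_n$; note this argument is independent of the sign of $\varepsilon$.

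The geometric core is the stability statement. Substituting the above $f_0,g_0$ into the transversality quantity of Theorem~\ref{theorgen} gives
\begin{equation*}
f_0\,\frac{\partial h}{\partial x}+g_0\,\frac{\partial h}{\partial y}
= -\,x\,y\left[\Big(\frac{\partial h}{\partial x}\Big)^2+\Big(\frac{\partial h}{\partial y}\Big)^2\right]
= -\,x\,y\,\|\nabla h\|^2 .
\end{equation*}
On each oval we have $x>0$ and $y>0$, so $x\,y>0$, and $\|\nabla h\|^2>0$ by the nondegeneracy hypothesis~(\ref{nonzerograd}); hence this expression is strictly negative along every oval. This is precisely the stable-case condition~(\ref{transcondition_minus}), while~(\ref{nonzerograd}) also supplies the nonvanishing needed for~(\ref{transcondition}), so Theorem~\ref{theorgen} applies and yields $N$ stable algebraic limit cycles for every $\varepsilon>0$. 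The cofactor then falls out of the same computation: inserting the full $f,g$ into the invariance identity~(\ref{alglimcyclcond}), the two $\varepsilon$-contributions $\mp\,\varepsilon\,x\,y\,(\partial h/\partial x)(\partial h/\partial y)$ cancel, leaving $(\partial h/\partial x)\,f+(\partial h/\partial y)\,g = -\,x\,y\,h\,\|\nabla h\|^2$, so $s(x,y)=-\,x\,y\,\|\nabla h\|^2$.

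Each step is individually routine, so I do not anticipate a serious obstacle; the only point requiring a little care is the chemical-realizability check, where one must confirm that forcing both equations to share the factor $x\,y$ is compatible with the sign constraints~(\ref{condO}). The resolution is that divisibility by $x$ (respectively $y$) makes the boundary coefficients $a_{0,i}$ and $b_{i,0}$ \emph{vanish} rather than merely be nonnegative, and it is exactly this feature that lets the construction~(\ref{odexchem})--(\ref{odeychem}) serve simultaneously as a mass-action system and as an instance of Theorem~\ref{theorgen} with a sign-definite transversality term.
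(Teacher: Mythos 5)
Your proposal is correct and takes essentially the same route as the paper: both recognize the system as the instance $f_0=-xy\,\frac{\partial h}{\partial x}$, $g_0=-xy\,\frac{\partial h}{\partial y}$ of the general form~(\ref{odex})--(\ref{odey}), observe that hypothesis~(\ref{nonzerograd}) makes the transversality expression equal to $-xy\|\nabla h\|^2<0$ on the ovals so that Theorem~\ref{theorgen} yields $N$ stable algebraic limit cycles, and obtain the cofactor by the same differentiation of $h$ along trajectories. Your write-up additionally spells out two points the paper's proof leaves implicit, namely the exact-degree count via Euler's identity and the mass-action realizability check through Lemma~\ref{lemma1}, both of which are correct.
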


\begin{proof}
Consider the ODE system~(\ref{odex})--(\ref{odey})
with
\begin{equation}
f_0(x,y) \, = \, - \, x \, y \, \frac{\partial h}{\partial x}(x,y)
\qquad \mbox{and} \qquad
g_0(x,y) \, = \, - \, x \, y \, \frac{\partial h}{\partial y}(x,y). 
\label{fgform}
\end{equation}
Then the ODE system~(\ref{odex})--(\ref{odey}) becomes the ODE system~(\ref{odexchem})--(\ref{odeychem}),
where the right-hand side contains polynomials of degree at most $2 n_h  + 1.$ Moreover, the assumption~(\ref{nonzerograd}) implies the transversality condition~(\ref{transcondition_minus}). Using Theorem~\ref{theorgen}, we conclude the existence of $N$ stable algebraic limit cycles contained in the components of the curve $h(x,y)=0$ for all $\varepsilon>0$. Differentiating $h(x,y)$ with respect of time, we obtain
\begin{equation*}
\frac{\mbox{d}}{\mbox{d}t}
h(x,y)
\,=\,
\frac{\partial h}{\partial x}(x,y) 
\,
\frac{\mbox{d}x}{\mbox{d}t}
+
\frac{\partial h}{\partial y}(x,y) 
\,
\frac{\mbox{d}y}{\mbox{d}t}
\,=\,
- \,
x \, y \, \left[
\left( \frac{\partial h}{\partial x}(x,y) \right)^{\!\!2}
+
\left(
\frac{\partial h}{\partial y}(x,y)  
\right)^{\!\!2}
\right]
h(x,y) 
\end{equation*}
which implies that the cofactor~(\ref{alglimcyclcond}) is
a polynomial of degree at most $2 n_h$ given by
$$
s(x,y)
\,=\,
- \, x \, y \,
\left( \frac{\partial h}{\partial x}(x,y) \right)^{\!\!2}
- \, x \, y \,
\left(
\frac{\partial h}{\partial y}(x,y)  
\right)^{\!\!2}
\,=\,
- \, x \, y 
\parallel \! \nabla h(x,y) \!\parallel^2.
$$
\end{proof}

\noindent
We note that the ODE system~(\ref{odexchem})--(\ref{odeychem}) can also be written in the matrix form as
\begin{equation}
\frac{\mbox{d}}{\mbox{d}t}
\begin{pmatrix}
x \\
y 
\end{pmatrix}
\, = \,
x \, y \,
\begin{pmatrix}
-h(x,y) & - \varepsilon \\
\varepsilon & -h(x,y)
\end{pmatrix}
\nabla h(x,y) \,.
\label{matrixform}
\end{equation}
This ODE system can be used to construct chemical systems with multiple stable algebraic limit cycles, provided that the ovals of $h(x,y)=0$ are contained in the (strictly) positive quadrant $(0,\infty)^2$, as we illustrate using examples with quartic planar curves ({\it i.e.} using $n_h=4$) in the next section.

\subsection{A chemical system with multiple stable algebraic limit cycles}

\label{sec61}

We consider quartic polynomial $q(x,y)$ in the following form
\begin{equation}
q(x,y) \,= \, 16 \, (x^4+y^4)
\, - \,
25 \, (x^2+y^2)
\, + \, 
\mu \, x^2 y^2 \, + \, 9 \,,
\label{mucurve4}
\end{equation}
where $\mu \in {\mathbb R}$ is a parameter. Since the degree of the polynomial~(\ref{mucurve4}) is $4$ for all $\mu \in {\mathbb R}$, Harnack's curve theorem implies that the maximum number of connected components of the algebraic curve $q(x,y)=0$ is $4.$ Depending on the value of parameter $\mu \in {\mathbb R},$ the algebraic curve $q(x,y)=0$ contains one, two or four ovals, as we show in our next lemma and illustrate in Figure~\ref{figure7}.

\vskip 2mm

\begin{lemma}
Let $\mu \in {\mathbb R}$ and let $q(x,y)$ be given by~$(\ref{mucurve4})$. Then we have:
{\par \parindent -5mm \leftskip 7mm
{\rm (i)} The set of solutions to equation $q(x,y)=0$ contains points
\begin{equation}
[-1,0], \quad
[-3/4,0], \quad
[3/4,0], \quad
[1,0], \quad
[0,-1], \quad
[0,-3/4], \quad
[0, 3/4], \quad
\mbox{and} \quad
[0,1].
\label{alwayspoints}
\end{equation}
Points~$(\ref{alwayspoints})$ are the only intersections of the algebraic curve $q(x,y)=0$ with $x$-axis and $y$-axis.
\par \parindent -6mm {\rm (ii)}  If $\mu \le -32,$ then the set of solutions to equation $q(x,y)=0$ contains one oval. 
\par \parindent -7mm {\rm (iii)} If $-32 < \mu < 337/9,$ then the set of solutions to equation $q(x,y)=0$ contains two ovals. 
\par \parindent -7mm {\rm (iv)} If $\mu = 32,$ then the algebraic curve $q(x,y)=0$ are two concentric circles with radii $3/4$ and $1$.
\par \parindent -6mm {\rm (v)} If $\mu=337/9$, then the set of solutions to equation $q(x,y)=0$ is connected and contains four ordinary double points $(${\hskip -0.4mm}crunodes$)$ at $[3/5,3/5]$, $[3/5,-3/5]$, $[-3/5,3/5]$ and $[-3/5,-3/5]$. 
\par \parindent -7mm {\rm (vi)} If $\mu > 337/9,$ then the set of solutions to equation $q(x,y)=0$ contains four ovals. \hfill\break 
In particular, $q(x,y)=0$ is an M-curve containing four connected components. 
\par}
\label{lemmaquartic}
\end{lemma}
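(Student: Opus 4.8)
The plan is to exploit the full symmetry of $q$: it is even in $x$, even in $y$, and invariant under $x \leftrightarrow y$. I would first pass to the variables $X = x^2 \ge 0$, $Y = y^2 \ge 0$, under which the zero set of $q$ corresponds to the intersection of the conic $C_\mu : 16X^2 + 16Y^2 + \mu XY - 25X - 25Y + 9 = 0$ with the closed first quadrant $\overline{Q} = \{X,Y \ge 0\}$. The pre-image map $(X,Y) \mapsto (\pm\sqrt X, \pm\sqrt Y)$ supplies a lifting dictionary that I would record once and reuse: an arc of $C_\mu$ in $\overline Q$ joining a point of the positive $X$-axis to a point of the positive $Y$-axis lifts to a single oval encircling the origin; an arc joining two points of the $X$-axis lifts to two ovals symmetric about the $x$-axis (and its $X\leftrightarrow Y$ image gives the $y$-axis analogue); and a closed loop of $C_\mu$ lying inside the open quadrant lifts to four ovals. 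Bounded ovals in the $(x,y)$-plane correspond exactly to bounded arcs or loops of $C_\mu$ in $\overline Q$.

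The easy parts fall out immediately. For (i), setting $Y=0$ kills the $\mu$-term and leaves $16X^2-25X+9=(X-1)(16X-9)=0$, so $X\in\{1,9/16\}$ and hence $x\in\{\pm1,\pm\tfrac34\}$ independently of $\mu$; the $X\leftrightarrow Y$ symmetry gives the $y$-axis points, and these are all the axis intersections. For (iv), at $\mu=32$ the quadratic part is a perfect square, $16X^2+32XY+16Y^2=16(X+Y)^2$, so $q=(X+Y-1)\bigl(16(X+Y)-9\bigr)$ and $C_{32}$ degenerates into the two segments $X+Y=1$ and $X+Y=\tfrac{9}{16}$, which lift to the concentric circles $x^2+y^2=1$ and $x^2+y^2=\tfrac{9}{16}$. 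Two further routine computations fix the thresholds: the conic-type discriminant $\mu^2-4\cdot16\cdot16=\mu^2-1024$ (ellipse for $|\mu|<32$, hyperbola for $|\mu|>32$), and the restriction of $q$ to the symmetry line $X=Y=t$, namely $(32+\mu)t^2-50t+9=0$, whose own discriminant $1348-36\mu$ vanishes exactly at $\mu=\tfrac{337}{9}$ and controls how many times $C_\mu$ meets the first-quadrant diagonal.

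With these in hand I would dispatch the three counting regimes by locating $C_\mu$ relative to the diagonal $X=Y$ and the four fixed boundary points. For (v) with $\mu>\tfrac{337}{9}$ the diagonal discriminant $1348-36\mu$ is negative, so $C_\mu$ never meets $X=Y$; each branch of the hyperbola therefore lies entirely in one open half-plane $\{X>Y\}$ or $\{X<Y\}$. A branch in $\{X>Y\}$ can touch $\partial\overline Q$ only on the $X$-axis (the positive $Y$-axis lies in $\{X<Y\}$), so by (i) it is precisely the arc joining $(\tfrac{9}{16},0)$ to $(1,0)$, its mirror joins $(0,\tfrac{9}{16})$ to $(0,1)$, and the dictionary yields four ovals; this realises the Harnack maximum, so $q=0$ is an M-curve. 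For (ii) with $\mu\le-32$ the leading quartic form $16x^4+\mu x^2y^2+16y^4$ fails to be positive definite, so $q=0$ is unbounded, escaping along the diagonal directions; simultaneously $(32+\mu)t^2-50t+9=0$ now has a single positive root, so $C_\mu$ crosses the first-quadrant diagonal exactly once, leaving one bounded $X$-to-$Y$ arc (the inner oval through the $\pm\tfrac34$ points) while the outer component through the $\pm1$ points has escaped to the unbounded branches, hence one oval. For (iii) with $-32<\mu<\tfrac{337}{9}$ the conic is bounded and meets the diagonal at two positive points, and I would argue that $C_\mu\cap\overline Q$ is exactly two nested $X$-to-$Y$ arcs (giving two ovals around the origin) by persistence: the first-quadrant topology of this symmetric conic through four fixed boundary points can only change at the degeneracy values $\mu=\pm32$, at the diagonal-tangency value $\mu=\tfrac{337}{9}$, or at the boundedness threshold $\mu=-32$, and the already-solved cases $\mu=0$ and $\mu=32$ pin the configuration throughout the interval.

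The routine steps are (i), (iv), and the two threshold computations; the genuine obstacle is the global topology of $C_\mu\cap\overline Q$ in the intermediate range, namely rigorously excluding any spurious closed loop inside the open quadrant and any alternative pairing of the four boundary points, and confirming that the reconnection at $\mu=\tfrac{337}{9}$ produces the same-axis pairing (four ovals) rather than a re-crossing (two ovals). The half-plane argument settles $\mu>\tfrac{337}{9}$ cleanly, so it is precisely the intermediate interval where the careful continuity bookkeeping anchored at the explicit cases has to be carried out.
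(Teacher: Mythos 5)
Your reduction to the conic $C_\mu$ in the variables $(X,Y)=(x^2,y^2)$, together with the lifting dictionary, is a genuinely different route from the paper's. The paper stays with the quartic: it computes the axis intersections exactly as you do, handles $\mu=32$ by writing $q$ as a quadratic in $r^2=x^2+y^2$, and for (ii), (iii), (v) it counts the real roots of $q(x,x)=(32+\mu)x^4-50x^2+9$ (two, four, or none) and then essentially \emph{asserts} the corresponding oval counts, citing the figures; for (v) the assertion is that since the curve misses the diagonals $y=\pm x$, its components sit in the four sectors, one oval each. Your restriction $(32+\mu)t^2-50t+9$ is precisely $q(x,x)$ after $t=x^2$, so both proofs share the same counting device; what the conic picture buys is that the global claims become claims about a conic in a quadrant, which is why your half-plane argument for (v) is an actual proof (branch separation, only two admissible boundary points per branch, then lift) where the paper has an assertion. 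On (i), (iv), (v), and essentially (ii), you are at or above the paper's level of rigor.

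Case (iii) is where the problems are. First, a concrete error: you assert ``the conic is bounded'' for all $-32<\mu<337/9$, but by your own discriminant $\mu^2-1024$ the conic is a hyperbola on the subinterval $32<\mu<337/9$; what is true, and what you need, is that $C_\mu\cap\overline{Q}$ is bounded there, because the asymptotic directions $16m^2+\mu m+16=0$ have two negative roots when $\mu>32$ --- the same fact your case (v) rests on. Second, your persistence claim --- that the first-quadrant topology can only change at $\mu=\pm32$, $\mu=337/9$ --- is exactly what has to be proved, and you leave it open (as you admit). It can be avoided entirely within your framework by a direct argument at each fixed $\mu$. For $|\mu|<32$: the ellipse is one closed curve, symmetric about the diagonal, meeting it at exactly two points interior to $\overline{Q}$ and meeting the axes transversally at the four fixed points; the two diagonal points split the ellipse into two mirror arcs, each carrying the two axis points of one half-plane, and walking around the ellipse the transversal crossings alternate inside/outside, forcing $C_\mu\cap\overline{Q}$ to be exactly two arcs, each joining an $X$-axis point to its mirror $Y$-axis point. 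For $32<\mu<337/9$: each hyperbola branch meets the diagonal exactly once (the diagonal passes through the center $X=Y=25/(32+\mu)$, so its two intersection points lie one per branch), hence each branch is preserved by the reflection and enters $\overline{Q}$; since the branch is unbounded while its quadrant part is bounded, it must cross $\partial\overline{Q}$, and symmetry plus the total of two crossings per axis forces exactly one crossing of each semi-axis per branch, so each branch contributes a single $X$-to-$Y$ arc. Either way, two nested arcs lift to two nested ovals. As a check on your transition list: $C_\mu$ is a degenerate conic exactly at $\mu=32$ (the parallel lines $X+Y=1$ and $X+Y=9/16$) and at $\mu=337/9$ (the crossing lines $X+\tfrac{16}{9}Y=1$ and $\tfrac{16}{9}X+Y=1$), which is precisely where the pairing of the four boundary points reconnects from axis-to-axis (two ovals) to same-axis (four ovals).
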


\begin{proof}
(i) If $y=0$, then $q(x,y)=0$ simplifies to
$16 x^4
\, - \,
25 x^2
\, + \, 9
\, = \, 0,$
which is solved by $x=\pm 1$ and $x=\pm 3/4.$ Using symmetry, equation $q(x,y)=0$ is solved for $x=0$ by $y=\pm 1$ and $y=\pm 3/4.$ \par

\vskip 1mm

\noindent
(ii) Using~(\ref{mucurve4}), we have 
$$
q(x,x)
\,= \, (32+\mu) \, x^4
\, - \,
50 \, x^2
\, + \, 9 \,.
$$
If $\mu \le -32$, then equation $q(x,x)=0$ has exactly two real solutions and the algebraic curve $q(x,y)=0$ contains one oval. For example,  
if $\mu=-32$, then the two real solutions to $q(x,x)=0$ are given as $x=\pm 3/(5\sqrt{2}) \approx 0.424$ and the algebraic curve $q(x,y)=0$ contains one oval which goes clockwise through the points $[-3/4,0],$ $[-3/(5\sqrt{2}),3/(5\sqrt{2})],$ $[0, 3/4],$ $[3/(5\sqrt{2}),3/(5\sqrt{2})],$
$[3/4,0],$ $[3/(5\sqrt{2}),-3/(5\sqrt{2})],$ $[0,-3/4]$ and $[-3/(5\sqrt{2}),3/(5\sqrt{2})],
$ see Figure~\ref{figure7}(b).

\vskip 1mm

\noindent
(iii) If $-32 < \mu < 337/9,$ then there are four real solutions to $q(x,x)=0$ given by
$$
\pm \sqrt{\frac{25 + \sqrt{337 - 9 \, \mu}}{ 32+\mu}}
\quad
\mbox{and}
\quad
\pm \sqrt{\frac{25 - \sqrt{337 - 9 \, \mu}}{32+\mu}}
$$
and the set of solutions to equation $q(x,y)=0$ contains two concentric ovals, see Figures~\ref{figure7}(c), \ref{figure7}(d), \ref{figure7}(e) and~\ref{figure7}(f).

\vskip 1mm

\noindent
(iv) If $\mu=32$, then the formula~(\ref{mucurve4}) can be rewritten as
$$
q(x,y) \,= \, 16 \, (x^2+y^2)^2
\, - \,
25 \, (x^2+y^2)
\, + \, 
9
\,=\,
16 \, r^4 - \, 25 \, r^2 + \, 9 \,,
$$
where $r^2=x^2+y^2.$ Solving $q(x,y)=0$ for $r$, we obtain $r=1$ or $r=3/4,$ see Figure~\ref{figure7}(f).

\vskip 1mm

\noindent
(v) If $\mu=337/9$, then there are two solutions to $q(x,x)=0$ given as $\pm 3/5.$ They correspond to ordinary double points (crunodes) at $[3/5,3/5]$, $[3/5,-3/5]$, $[-3/5,3/5]$ and $[-3/5,-3/5]$, where the curves intersects itself so that two branches of the curve have distinct tangent lines, see Figure~\ref{figure7}(g).

\vskip 1mm

\noindent
(vi) If $\mu > 337/9,$ then there are no solutions to $q(x,x)=0$. In particular, we have four regions separated by lines $y=x$ and $y=-x$ each containing one oval, see Figures~\ref{figure7}(h) and~\ref{figure7}(i).
\end{proof}

\begin{figure}
\leftline{(a) \hskip 5.1cm (b) \hskip 5.1cm (c)}
\centerline{\hskip 3mm \includegraphics[height=4.1cm]{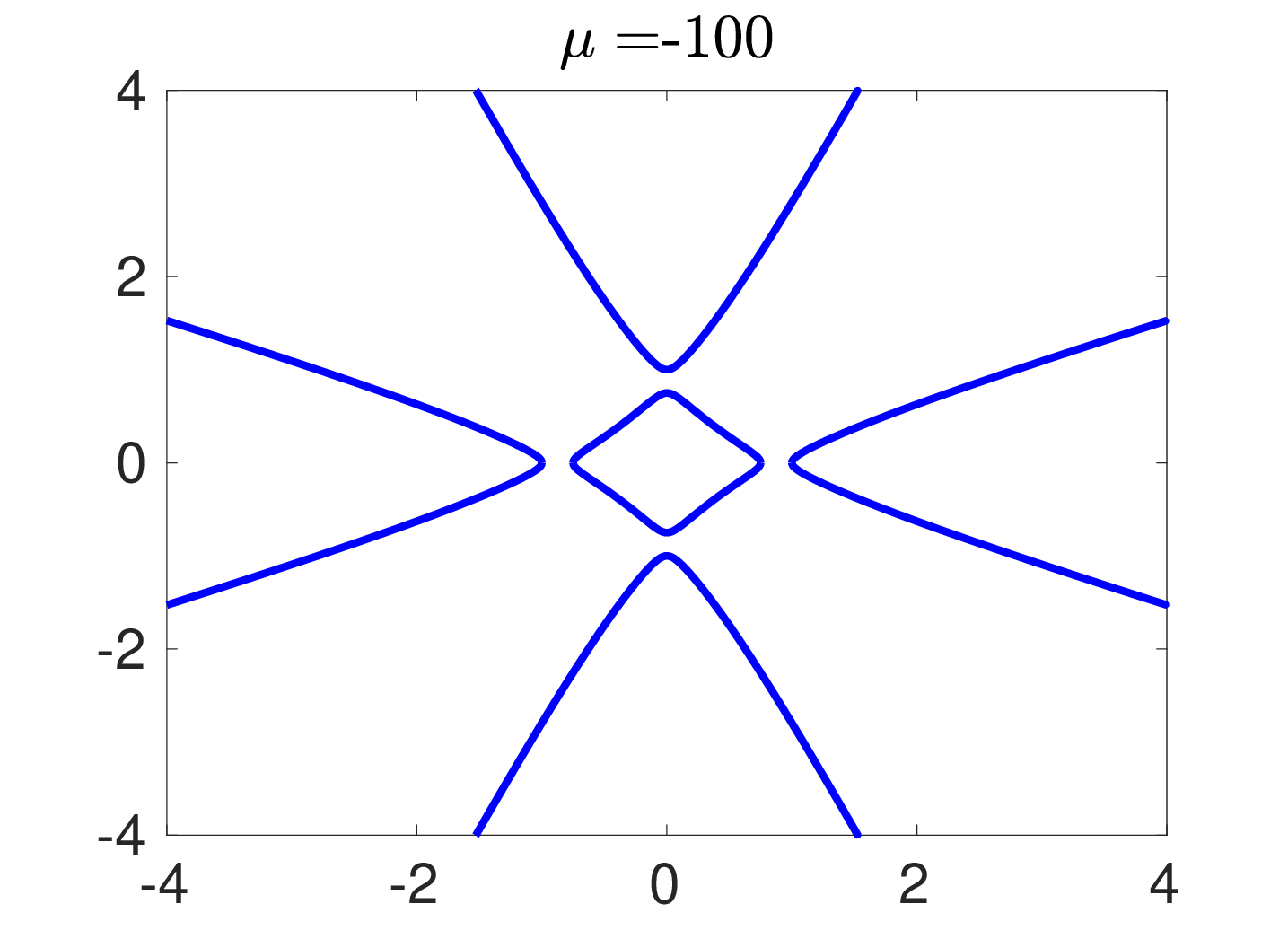}\hskip -1mm \includegraphics[height=4.1cm]{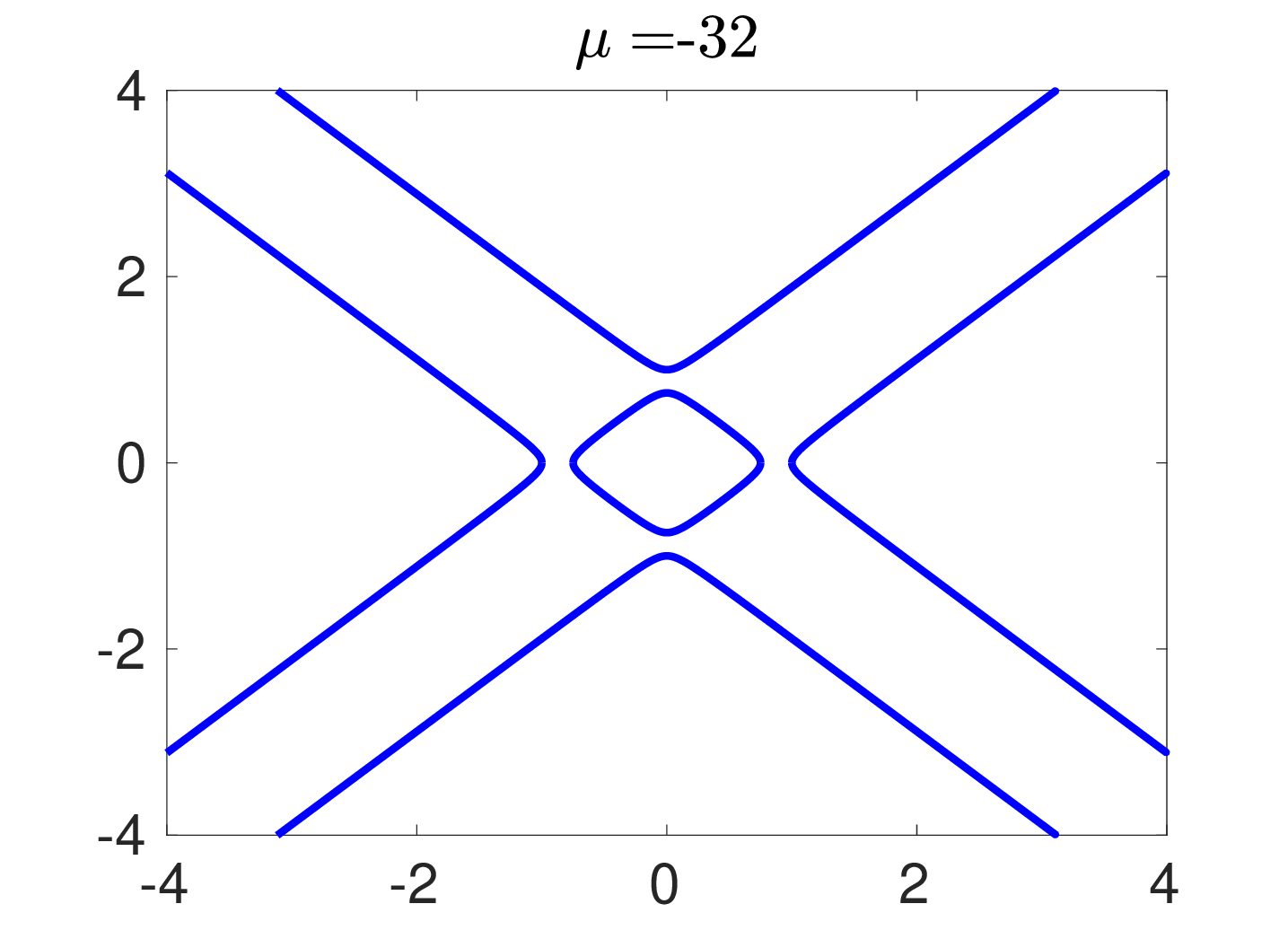} \hskip -1mm \includegraphics[height=4.1cm]{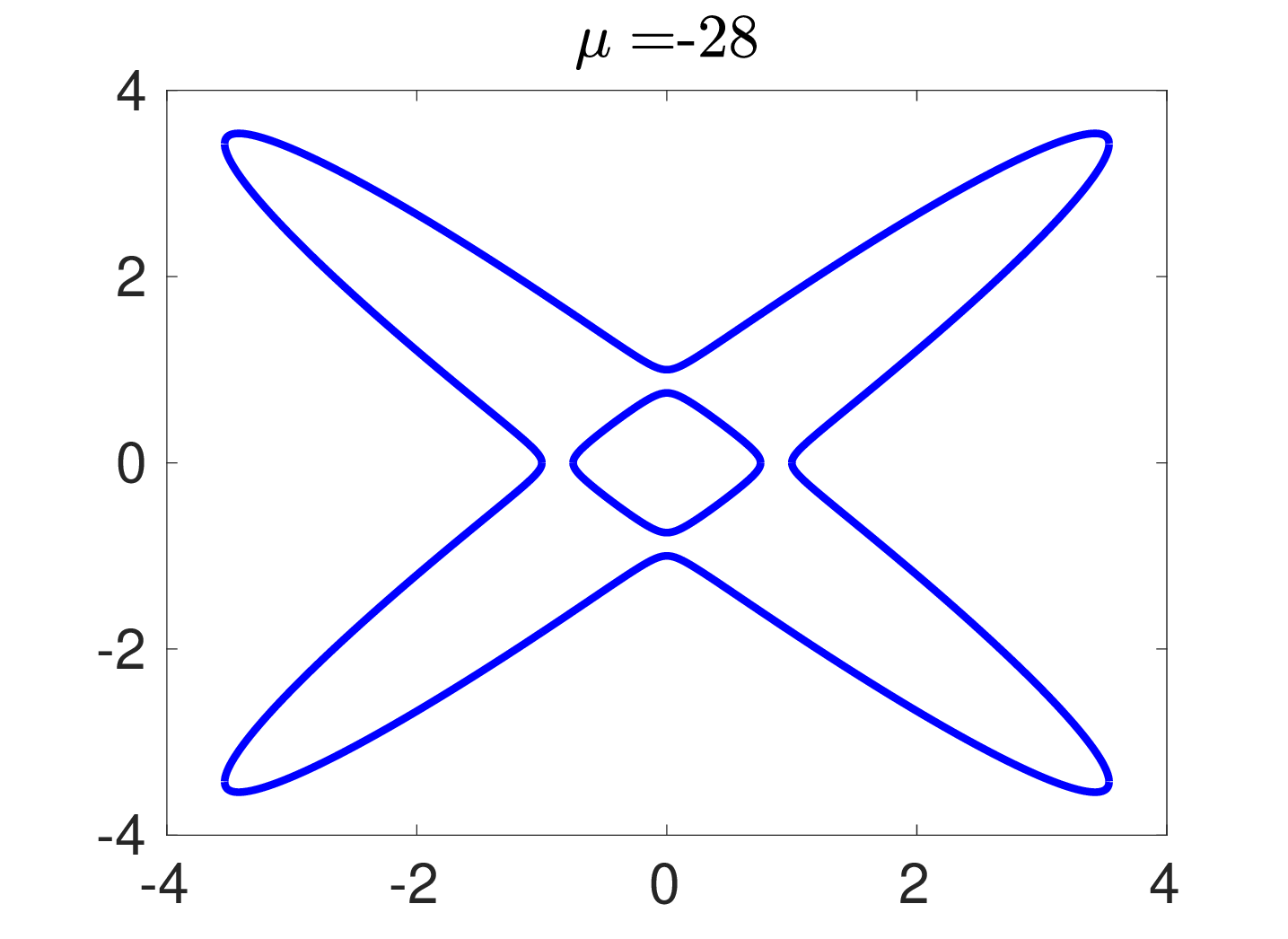}}
\leftline{(d) \hskip 5.1cm (e) \hskip 5.1cm (f)}
\centerline{\hskip 3mm \includegraphics[height=4.1cm]{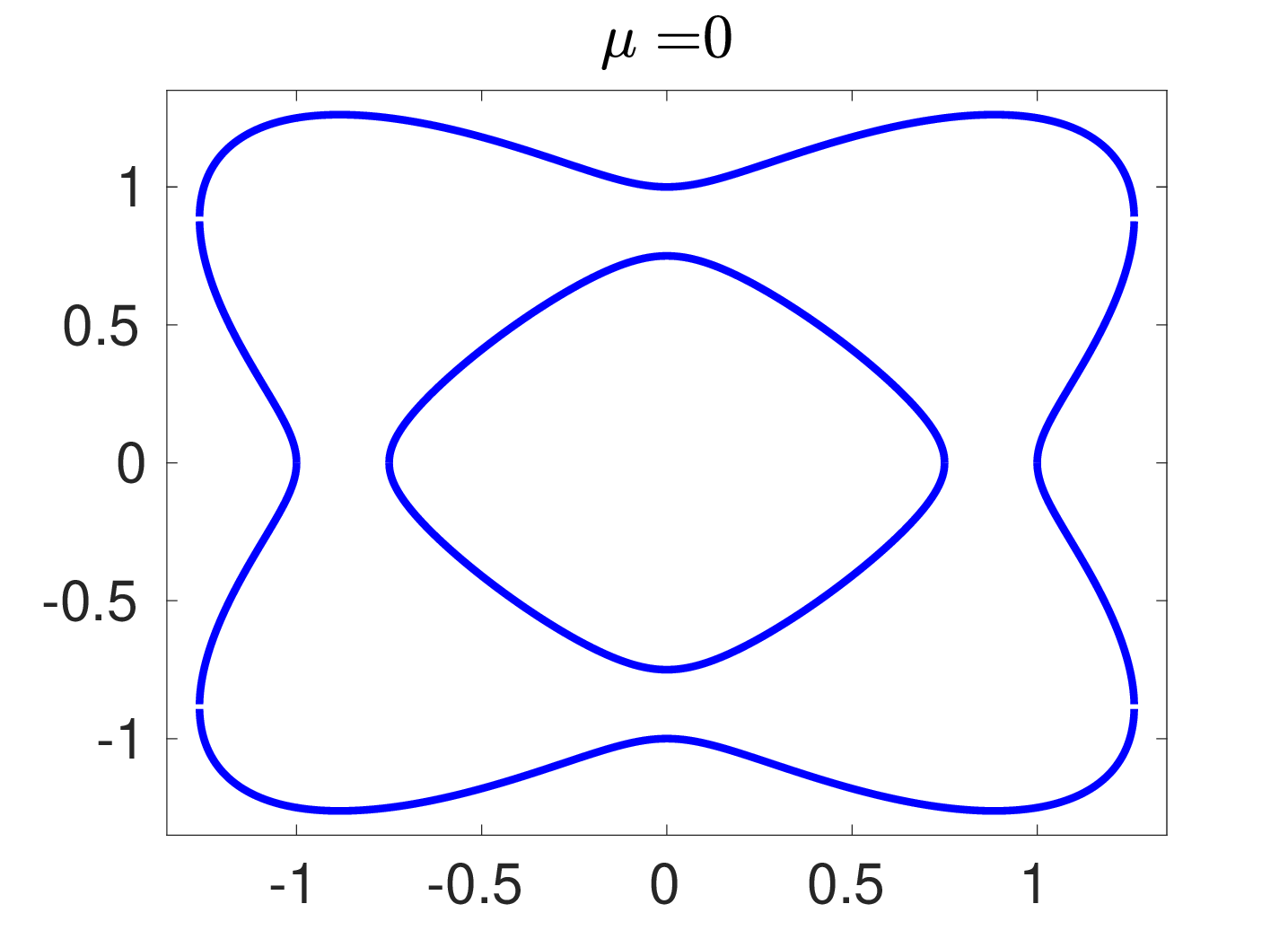}\hskip -1mm \includegraphics[height=4.1cm]{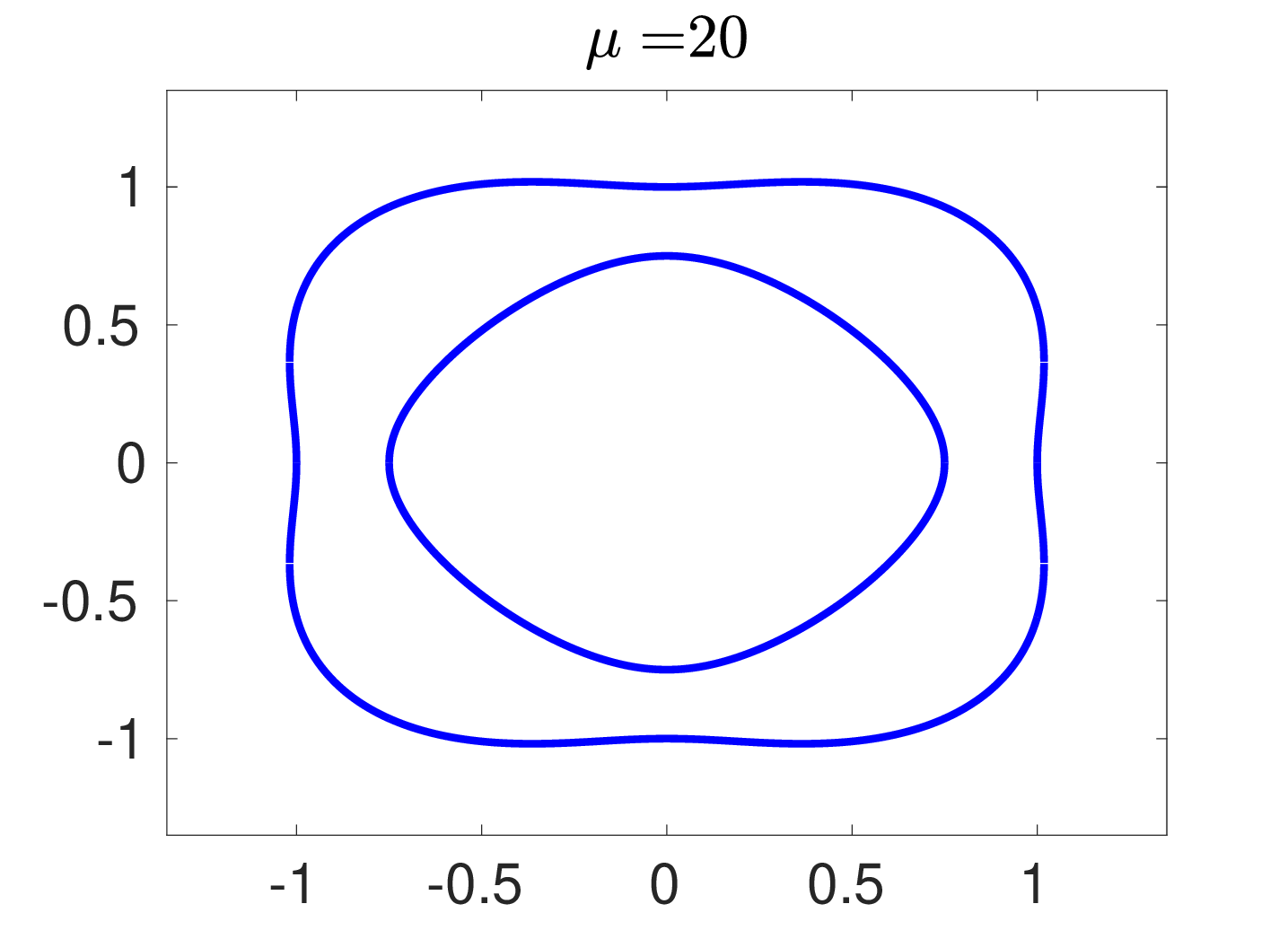} \hskip -1mm \includegraphics[height=4.1cm]{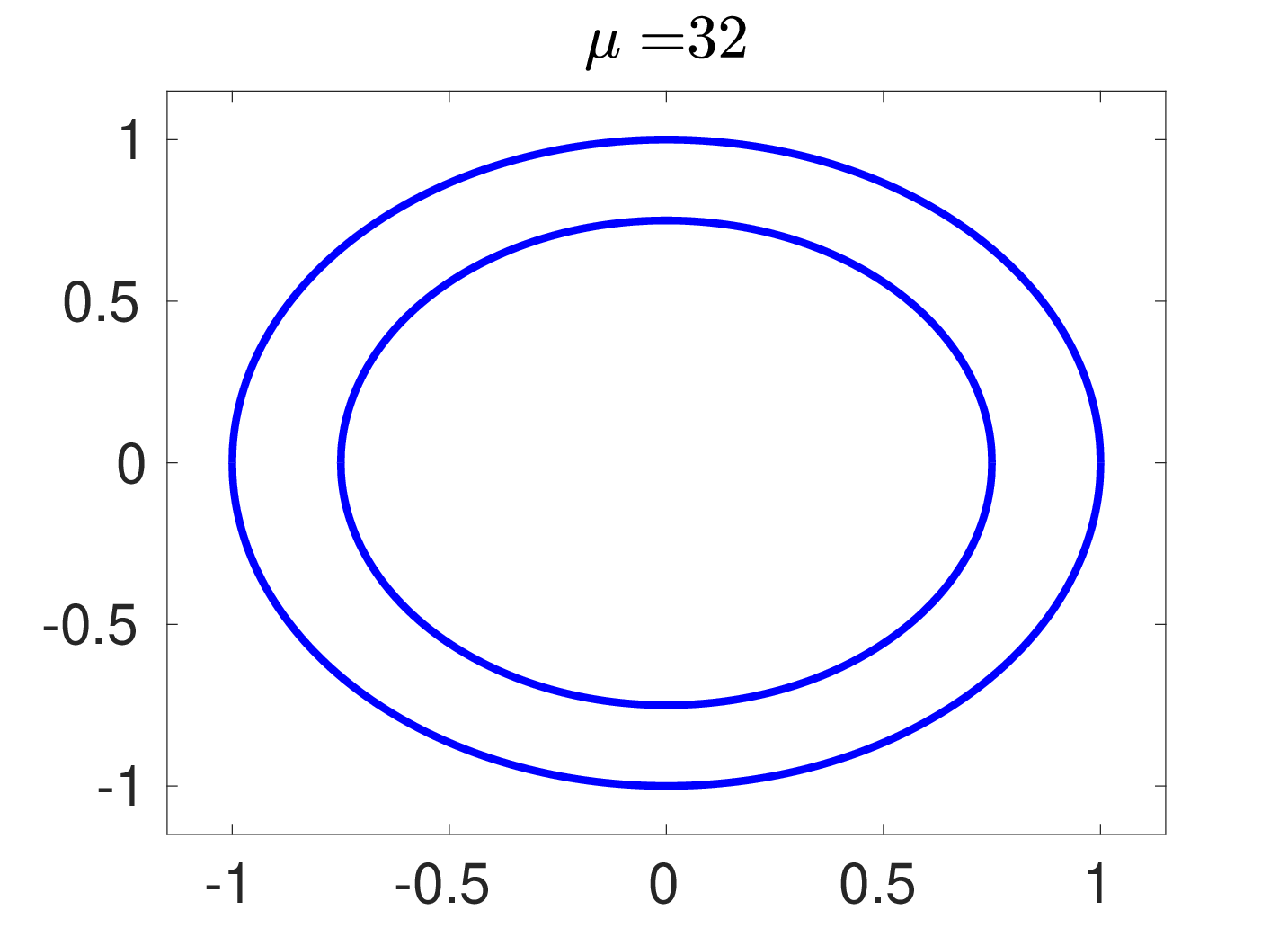}}
\leftline{(g) \hskip 5.1cm (h) \hskip 5.1cm (i)}
\centerline{\hskip 3mm \includegraphics[height=4.1cm]{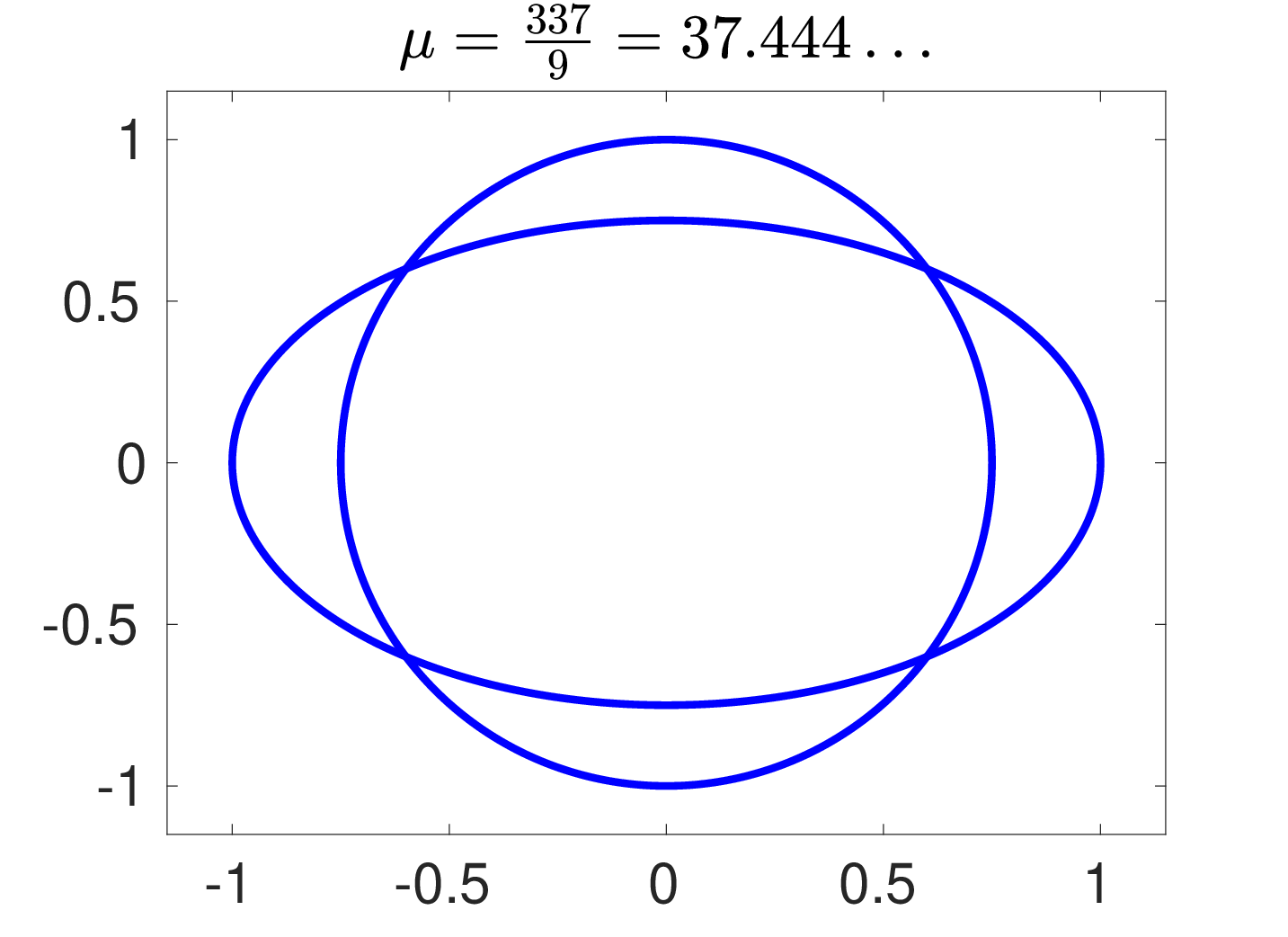}\hskip -1mm \includegraphics[height=4.1cm]{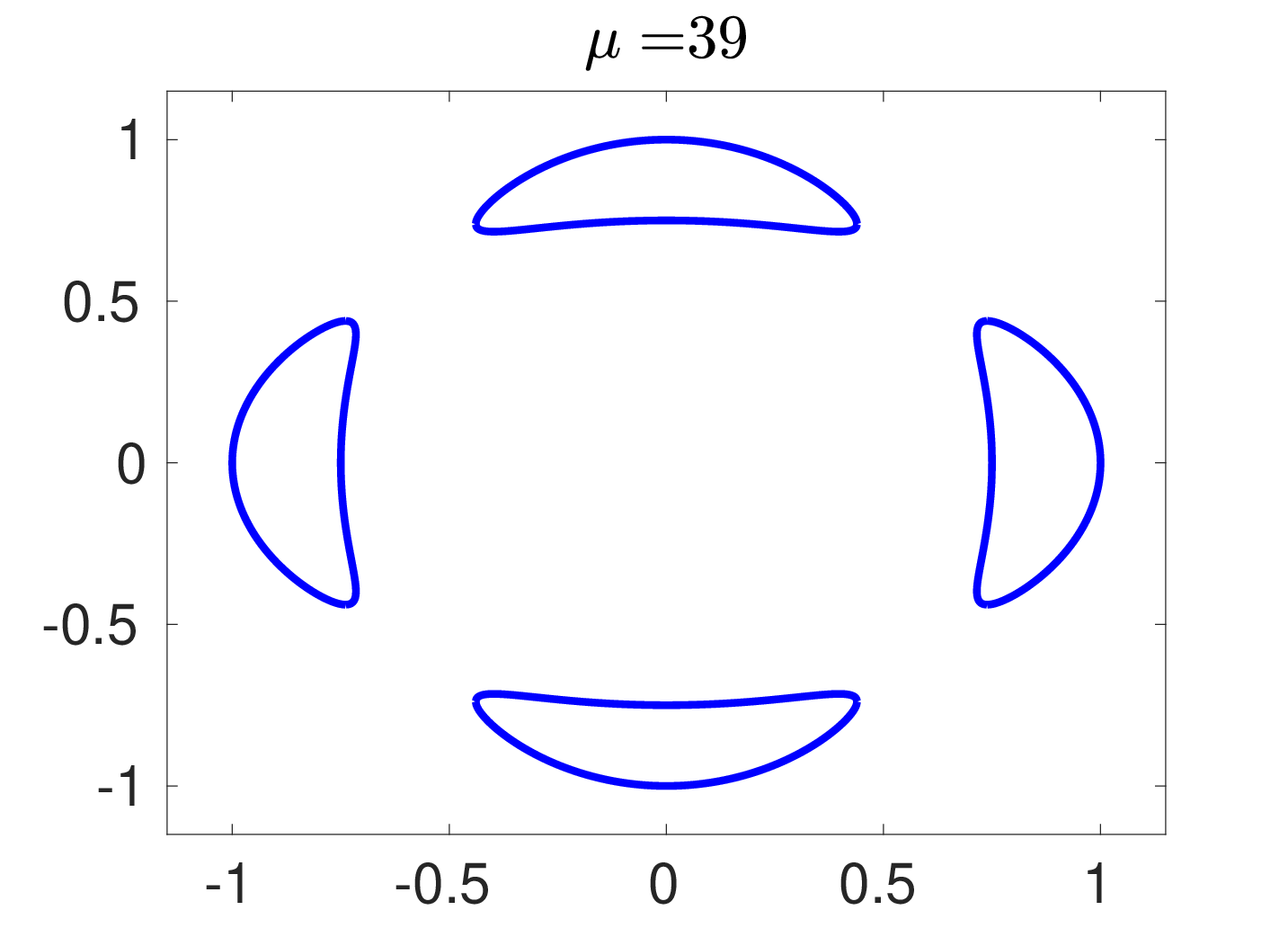} \hskip -1mm \includegraphics[height=4.1cm]{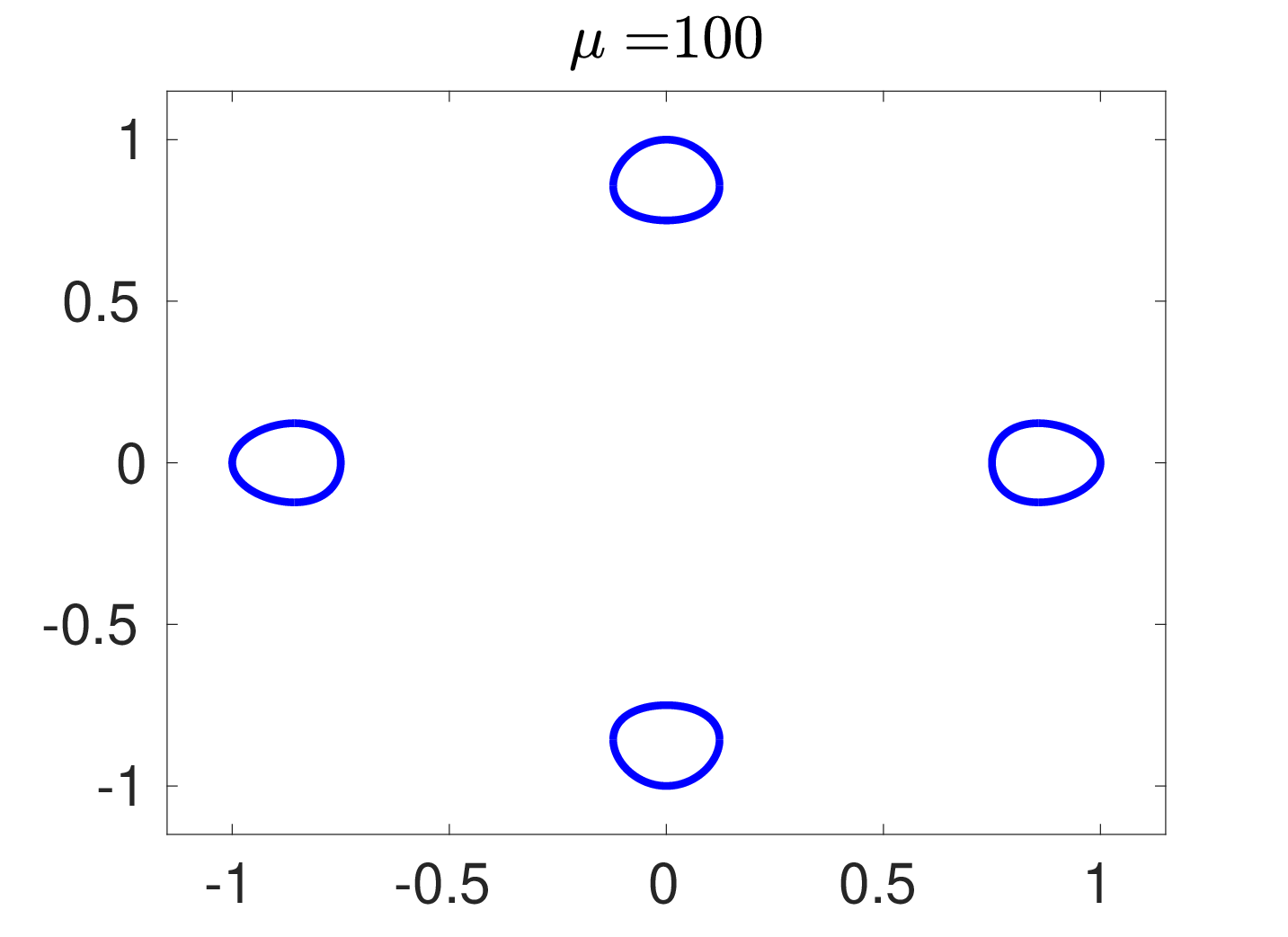}}
\caption{{\it The algebraic curve $q(x,y)=0$ given by~$(\ref{mucurve4})$ for} (a) $\mu=-100$; (b) $\mu=-32$; (c) $\mu=-28$; (d) $\mu=0$; (e) $\mu=20$; (f) $\mu=32$, (g) $\mu=337/9$; (h) $\mu=39$ and (i) $\mu=100.$}
\label{figure7}
\end{figure}

\noindent
The ovals of the algebraic curve $q(x,y)$ in Lemma~\ref{lemmaquartic} are outside of the positive quadrant. To apply Theorem~\ref{theorem1}, we first shift the curve $q(x,y)$ to get
\begin{equation}
h(x,y)
=
q(x-2,y-2),
\qquad
\mbox{where $q(x,y)$ is given by (\ref{mucurve4}).}
\label{hilexam}
\end{equation}
Then the ovals of $h(x,y)$ are in the positive quadrant for all nonnegative values of $\mu,$ see
Figure~\ref{figure7}. The phase plane of the ODE system~(\ref{odexchem})--(\ref{odeychem}) is plotted in Figure~\ref{figure8} for $\varepsilon=1$. We use two different values of $\mu$ corresponding to two ovals ($\mu=0$) and four ovals ($\mu=39$) of the algebraic curve $h(x,y)=0$ given by~(\ref{hilexam}). In both cases, we observe that all computed illustrative trajectories approach one of the ovals, confirming that Theorem~\ref{theorem1} leads to chemical systems with two (Figure~\ref{figure8}(a)) or four (Figure~\ref{figure8}(b)) stable limit cycles.

\begin{figure}
\leftline{(a) \hskip 7.6cm (b)}
\centerline{\hskip 3mm \includegraphics[height=6.7cm]{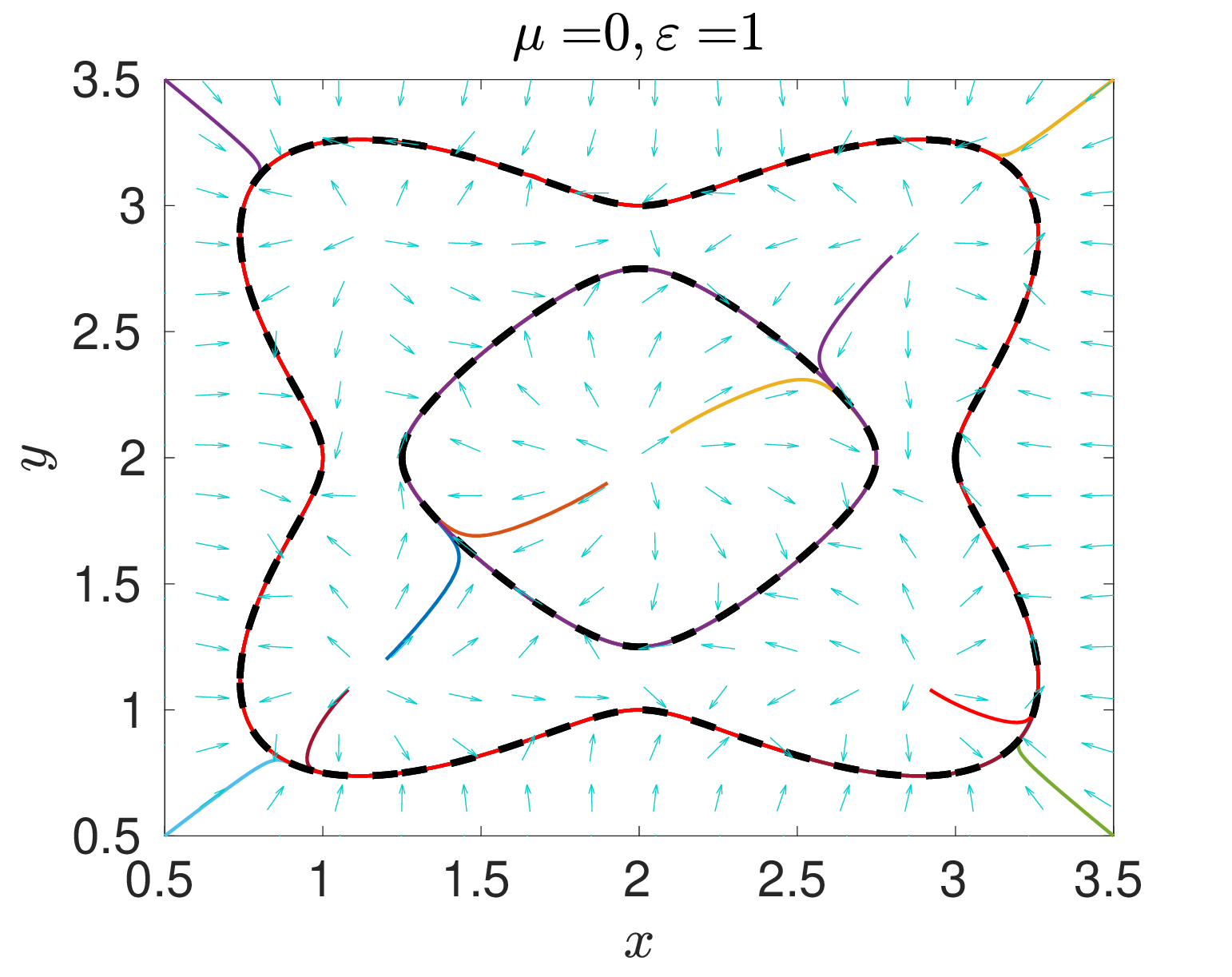}\hskip -3mm \includegraphics[height=6.7cm]{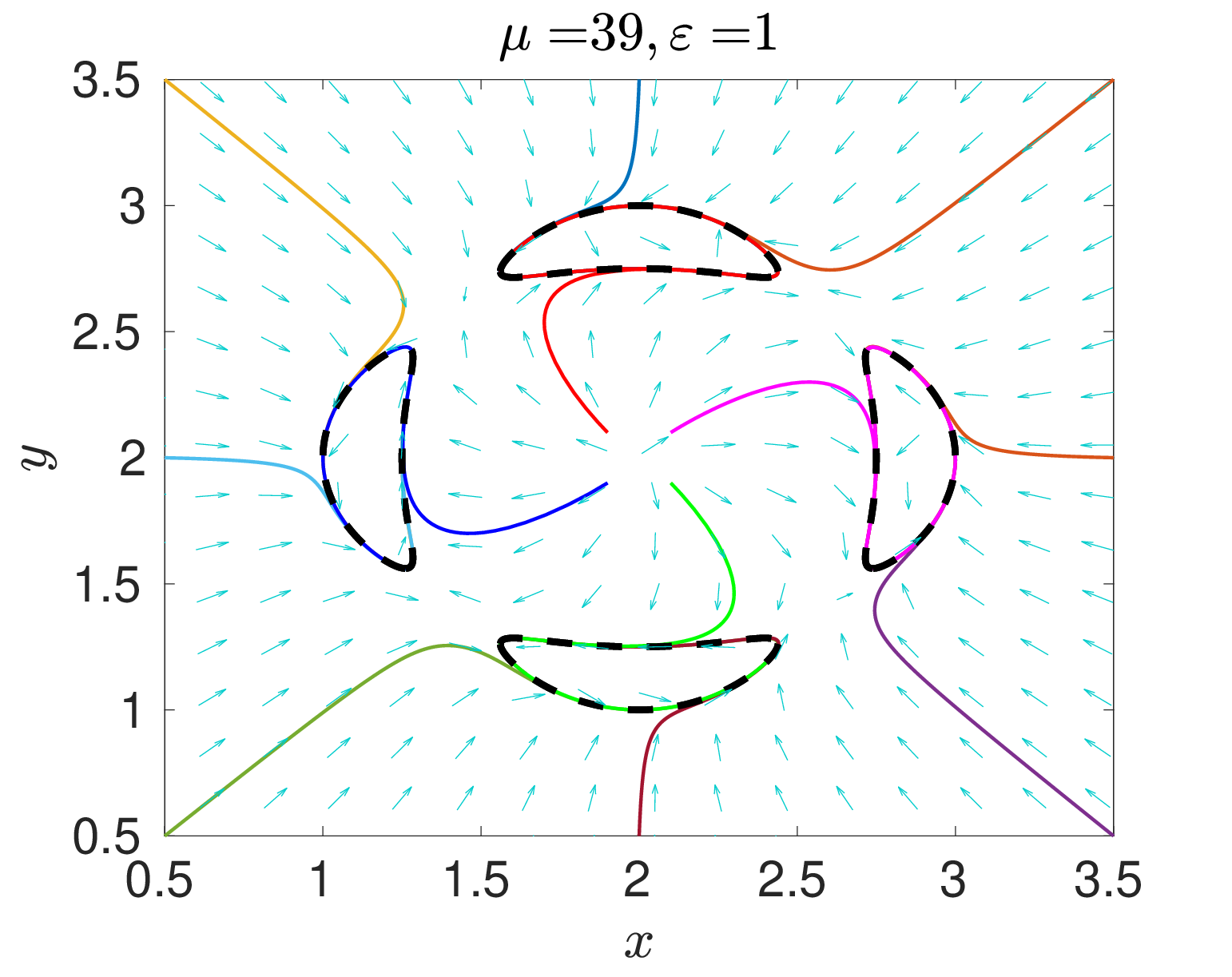}}
\caption{(a) {\it The phase plane of the {\rm ODE} system~$(\ref{odexchem})$--$(\ref{odeychem})$ with the polynomial $h(x,y)$ given by~$(\ref{hilexam})$ for parameters $\mu=0$ and $\varepsilon=1.$ We plot the two ovals of the algebraic curve $h(x,y)=0$ (black dashed line) together with ten illustrative trajectories showing convergence to one of the two ovals which are stable algebraic limit cycles of the {\rm ODE} system.} \hfill\break
(b) {\it The phase plane of the {\rm ODE} system~$(\ref{odexchem})$--$(\ref{odeychem})$ with the polynomial $h(x,y)$ given by~$(\ref{hilexam})$ 
for parameters $\mu=39$ and $\varepsilon=1,$ when the {\rm ODE} system has four stable algebraic limit cycles given as the four ovals of
the algebraic curve $h(x,y)=0$ $($visualized as the black dashed line together with illustrative trajectories converging to the limit cycles$)$.}}
\label{figure8}
\end{figure}

\section{Discussion}

\label{secdiscussion}

We have considered both algebraic and non-algebraic limit cycles in chemical reaction systems, with our results summarized together with the results in the literature in Tables~\ref{table1} and~\ref{table2}, respectively. To establish some lower bounds in Tables~\ref{table1} and~\ref{table2}, different techniques have to be applied. For example, small perturbations of an ODE system preserve the existence of a hyperbolic limit cycle, but an algebraic limit cycle can become non-algebraic after a perturbation. In particular, while the existence of a cubic weakly reversible chemical system with a limit cycle has been established in Table~\ref{table1}, it remains an open question whether a cubic weakly reversible system can have an algebraic limit cycle.

While a formulation of Hilbert's 16th problem restricted to algebraic limit cycles under generic conditions has been solved, see~\cite{Llibre:2010:OHP,gine2018cubic} for further discussion, these results are not considering the ODE systems which can be realized as chemical reaction networks. For example, a cubic system with two circular limit cycles is presented in~\cite{gine2018cubic}. Shifting the limit cycles to the positive quadrant, as we have done with our quartic example in equation~(\ref{hilexam}), and then multiplying the right-hand-side by $xy$ yields a fifth-order chemical system with two limit cycles. Other examples of cubic systems with 2 (non-generic) algebraic limit cycles appear in~\cite[Section 1]{Llibre:2010:OHP}, which could again be used to conclude that $S^a(5)\ge 2$. However, this does not improve the lower bound in Table~\ref{table2}, which implies $S^a(5) \ge S^a(4) \ge 3.$

Our investigation has focused on the ODE systems which can be realized as models of chemical reaction networks. However, such a realization is not unique: if an ODE system can be realized as the reaction rate equations of a chemical system, then there exists infinitely many chemical reaction networks corresponding to the same ODE system~\cite{Craciun_Pantea_2008,Plesa:2018:NCM,Craciun:2020:ECC}. For some studied ODE systems, we have been able to identify their realization as (weakly) reversible chemical reaction networks and this helped us to make conclusions on the values of numbers $W(n)$ and $W^a(n)$ (see, for example, our proof of Lemma~\ref{lemmaWa1}). In particular, chemical reaction networks (corresponding to the same ODE system) can be distinguished by having different structural properties. They can also be distinguished by considering their more detailed stochastic description~\cite{Enciso:2021:ISM}, written as the continuous-time discrete-space Markov chain and simulated by the Gillespie algorithm~\cite{Gillespie:1977:ESS,Erban:2020:SMR}. While the long-term dynamics of some chemical reaction networks can consist of a unique attractor of their ODE models, the long-term (stationary) probability distribution given by their stochastic model may display multiple maxima~\cite{Duncan:2015:NIM,Plesa:2019:NIM}. Considering chemical systems with limit cycles and oscillatory behaviour, stochastic models can bring additional possibilities for long-term dynamics including noise-induced oscillations~\cite{Muratov:2005:SSR,Erban:2009:ASC}. It may also happen that the ODE has a periodic solution and the long-term probability distribution is degenerate, converging to the state with zero molecules of all chemical species as time $t \to \infty$~\cite{Reddy:1975:ESS}.

The ODE system~(\ref{odexchem})--(\ref{odeychem}) or in its equivalent matrix form~(\ref{matrixform}) can be used to construct chemical reaction networks with stable algebraic limit cycles corresponding to the given algebraic curve $h(x,y)=0.$ In particular, if we want to construct a chemical system with more than one stable algebraic limit cycle, we can start with a quartic curve with more than one oval as shown in Section~\ref{sec61}. Another quartic curve with two ovals can be obtained as a product of two circles (quadratic curves). Such a product form construction has been used in our proof of Theorem~\ref{theoremWam}, see equation~(\ref{h_0}). Considering the product of two circles and using Theorem~\ref{theorem1}, we can obtain a chemical system which has the two circles as its two stable algebraic limit cycles. In Section~\ref{sec61}, we have considered quartic curve~(\ref{hilexam}) which had four closed connected components for $\mu>337/9$ and Theorem~\ref{theorem1} implied a chemical system with four stable algebraic limit cycles. To construct chemical systems with more stable limit cycles than four, we can apply Theorem~\ref{theorem1} to algebraic curves $h(x,y)=0$ of degree $n_h>4,$ which has the corresponding number of ovals. One possible way to find such algebraic curves is to construct them in the product form~(\ref{h_0}).

In this paper, we have considered chemical reaction systems with two chemical species $X$ and $Y$ which are described by planar ODE system~(\ref{odex_general})--(\ref{odey_general}). In particular, we could make connections to the results and open problems on limit cycles and periodic solutions in planar polynomial ODE systems, with attention to the results for systems with polynomials of low degree $n$ on the right hand side~\cite{Shi:1980:CEE,Li:2009:CST}. Our low degree $n$ investigation is also interesting from the applications point of view, because it decreases the order~(\ref{deforder}) of the chemical reactions when the ODE system~(\ref{odex_general})--(\ref{odey_general}) is realized as the chemical system. In particular, we have addressed some questions on `minimal' reaction systems with certain dynamics by minimizing the value of~$n$. The minimal reaction systems with oscillations can also be defined in terms of the minimal number $m$ of reactions in the chemical reaction network~(\ref{crn}), see~\cite{Banaji:2024:OTR} for some systems with two chemical species. In some applications, it is necessary to study chemical reaction systems with more than two chemical species, leading to three-dimensional or higher-dimensional ODE systems. For example, limit cycles in reaction networks with three or four chemical species are investigated under additional structural conditions on the reaction network in~\cite{Boros:2022:LCM,Boros:2023:SMB}. Multiple limit cycles for systems of two chemical species have also been reported in~\cite{Boros:2024:OPD} for the case when deficiency of the chemical reaction network is one, while it is well known that the deficiency-zero networks cannot have periodic solutions in the positive quadrant~\cite{Feinberg72}.

\vskip 15mm

\noindent
{\bf Funding.}
This work was supported by the Engineering and Physical Sciences Research Council, grant
number EP/V047469/1, awarded to Radek Erban, and by the National Science Foundation grant DMS-2051568, awarded to Gheorghe Craciun. Craciun also acknowledges support as a Visiting Scholar at Merton College Oxford. 

\newpage

\bibsep=0.1em

\end{document}